\begin{document}

\font\bfit=cmbxti10

\newtheorem{theorem}{Theorem}[section]
\newtheorem{lemma}[theorem]{Lemma}
\newtheorem{sublemma}[theorem]{Sublemma}
\newtheorem{proposition}[theorem]{Proposition}
\newtheorem{corollary}[theorem]{Corollary}
\newtheorem{conjecture}[theorem]{Conjecture}
\newtheorem{question}[theorem]{Question}
\newtheorem{problem}[theorem]{Problem}
\newtheorem*{claim}{Claim}
\newtheorem*{criterion}{Criterion}
\newtheorem*{reducible_thm}{Reducible Theorem (5.1)}
\newtheorem*{lens_thm}{Lens Theorem (5.2)}
\newtheorem*{hyperbolic_thm}{Hyperbolic Theorem (5.9)}
\newtheorem*{small_SFS_thm}{Small SFS Theorem (5.10)}
\newtheorem*{torus_thm}{Toroidal Theorem (5.19)}

\theoremstyle{definition}
\newtheorem{definition}[theorem]{Definition}
\newtheorem{construction}[theorem]{Construction}
\newtheorem{notation}[theorem]{Notation}

\theoremstyle{remark}
\newtheorem{remark}[theorem]{Remark}
\newtheorem{example}[theorem]{Example}

\numberwithin{equation}{subsection}

\def\H{\mathbb H}
\def\Z{\mathbb Z}
\def\N{\mathbb N}
\def\R{\mathbb R}
\def\Q{\mathbb Q}
\def\D{\mathcal D}
\def\E{\mathcal E}
\def\RR{\mathcal R}
\def\B{{\mathcal B}}
\def\PP{{\mathbb P}}

\def\RP{\mathbb{RP}}
\def\P{\mathcal P}
\def\F{\mathcal F}
\def\T{\mathcal T}
\def\ds{\displaystyle}

\def\CAT{\textnormal{CAT}}
\def\cl{\textnormal{cl}}
\def\scl{\textnormal{scl}}
\def\homeo{\textnormal{Homeo}}
\def\rot{\textnormal{rot}}
\def\area{\textnormal{area}}
\def\inte{\textnormal{int}}		

\def\Id{\textnormal{Id}}
\def\SL{\textnormal{SL}}
\def\GL{\textnormal{GL}}
\def\Sp{\textnormal{Sp}}
\def\PSL{\textnormal{PSL}}
\def\length{\textnormal{length}}
\def\fill{\textnormal{fill}}
\def\rank{\textnormal{rank}}
\def\til{\widetilde}

\title{Knots with small rational genus}
\author{Danny Calegari}
\address{Department of Mathematics \\ Caltech \\
Pasadena CA, 91125}
\email{dannyc@its.caltech.edu}
\author{Cameron Gordon}
\address{Department of Mathematics \\ University of Texas \\
Austin TX, 78712}
\email{gordon@math.utexas.edu}

\date{26/12/2012, Version 0.18}

\begin{abstract}
If $K$ is a rationally null-homologous knot in a $3$-manifold $M$, the {\em rational genus} of $K$
is the infimum of $-\chi(S)/2p$ over all embedded orientable surfaces $S$ in the complement of $K$
whose boundary wraps $p$ times around $K$ for some $p$ (hereafter: $S$ is a {\em $p$-Seifert surface}
for $K$). Knots with very small rational genus can be constructed by ``generic'' Dehn filling, 
and are therefore extremely plentiful. In this paper we show that knots with
rational genus less than $1/402$ are all {\em geometric} --- i.e. they may be isotoped into a special
form with respect to the geometric decomposition of $M$ --- and give a complete classification. 
Our arguments are a
mixture of hyperbolic geometry, combinatorics, and a careful study of the interaction of small
$p$-Seifert surfaces with essential subsurfaces in $M$ of non-negative Euler characteristic.
\end{abstract}

\maketitle

\section{Introduction}

Let $K$ be an oriented knot in a $3$-manifold $M$. If $K$ is null-homologous, it
bounds an embedded oriented surface $S$, 
called a Seifert surface. The least 
genus of such a surface is called the {\em genus} of $K$, and is denoted
$g(K)$. More generally, define the {\em rational genus} of $K$, denoted $\|K\|$, 
to be the infimum of $-\chi(S)/2p$ over all embedded orientable surfaces $S$ in the complement 
of $K$ without disk or sphere components, whose boundary wraps
$p$ times around $K$ for some $p$ (a precise definition is given in \S~\ref{p_Seifert_section}).

Largely because of an approach to the Berge conjecture via Knot Floer Homology, 
there has been recent interest in the question of
finding knots in $3$-manifolds with the property that they are
the {\em unique} knot in their homology class with least rational genus.
Since Knot Floer Homology detects the Thurston norm, and therefore
the rational genus of a knot (see Ozsv\'ath--Szab\'o \cite{OS}) such
knots have the property that they are characterized by their Knot Floer
Homology, and one can study such knots and surgeries on them using the surgery exact sequence. 
For example, the unknot in $S^3$ is the unique knot of genus $0$,
and various families of so-called ``simple knots'' in Lens spaces are the
unique knots of minimal rational genus in their respective homology classes (see Baker \cite{Baker},
Hedden \cite{Hedden} or Rasmussen \cite{Rasmussen}). Other authors, e.g.\ \cite{Baker_Etnyre},
have studied {\em rational linking number}, and its relation to contact geometry.

One way to produce knots of small rational genus is by surgery. For example,
let $K'$ be a non-trivial knot in $S^3$, and let $M$ be the $3$-manifold obtained by $p/q$ surgery on $K'$.
Let $K$ in $M$ be the core of the surgery solid torus. Then $[K]$ has order $p$ in
$H_1(M)$, and $\|K\| \le g(K')/p - 1/2p$ where $g(K')$ denotes the (ordinary) genus of $K'$, 
since the boundary of a Seifert surface for $K'$ wraps $p$ times around $K$ in the surgered manifold. 
For more detailed examples, see \S~\ref{example_subsection}.

\smallskip

The purpose of this paper is firstly to initiate a systematic study of rational genus and some of
its properties, and secondly to demonstrate that there is a universal positive constant such 
that knots in $3$-manifolds with rational genus bounded above by this constant can be 
completely classified. The precise statement of this classification, and the best estimate 
for the relevant constant, falls into several cases depending on the geometric decomposition of $M$.

\smallskip

In the generic (i.e.\ hyperbolic) case, the strategy is to deduce information about $K$ in two steps:
$$L^1\text{-homology} \longrightarrow \text{homotopy} \longrightarrow \text{isotopy}$$
An estimate for the rational genus of $K$ is really an estimate of the $L^1$-norm
of a certain relative homology class; such an estimate can be reinterpreted dually in terms of
bounded cohomology. Low-dimensional bounded cohomology in hyperbolic manifolds is
related to geometry (at the level of $\pi_1$) by the methods of Calegari \cite{Calegari_stable}.
Homotopy information is promoted to isotopy information by drilling and filling,
using uniform geometric estimates due to Hodgson-Kerckhoff \cite{Hodgson_Kerckhoff},
and Gauss-Bonnet. One interesting technical aspect of the argument is that it involves
finding a $\CAT(-1)$ representative of a surface in the complement of the cone locus
of a hyperbolic cone manifold. Such a surface can be found either by the PL wrapping technique of
Soma \cite{Soma}, or the shrinkwrapping technique of Calegari-Gabai \cite{Calegari_Gabai}.

\smallskip

The organization of the paper is as follows. \S~\ref{p_Seifert_section} introduces definitions,
proves some basic lemmas, and ends with several subsections enumerating examples. \S~\ref{graph_section}
introduces and develops the tools that power our combinatorial arguments (which apply especially
to knots with non-hyperbolic complements). \S~\ref{hyperbolic_knots_section} treats knots with hyperbolic
complements, and the arguments are more geometric and analytic. Finally, \S~\ref{general_knots_section}
assembles all this material, and contains the main theorems and their proofs.

\subsection{Statement of results}

The classification of knots with (sufficiently) small rational genus falls into
several cases, depending on the prime and geometric decomposition of $M$. These theorems are
proved in \S~\ref{general_knots_section}, and reproduced here for the convenience of the reader.

\begin{reducible_thm}
Let $K$ be a knot in a reducible manifold $M$. 
Then either 
\begin{enumerate}
\item{$\|K\| \ge 1/12$; or}
\item{there is a decomposition $M = M' \# M''$, $K \subset M'$ and either
\begin{enumerate}
\item{$M'$ is irreducible, or}
\item{$(M',K) = (\RP^3,\RP^1)\#(\RP^3,\RP^1)$}
\end{enumerate}
}
\end{enumerate}
\end{reducible_thm}

\begin{lens_thm}
Let $K$ be a knot in a lens space $M$. 
Then either 
\begin{enumerate}
\item{$\|K\| \ge 1/24$; or}
\item{$K$ lies on a Heegaard torus in $M$; or}
\item{$M$ is of the form $L(4k,2k-1)$ and $K$ lies on a Klein bottle in $M$ as a non-separating
orientation-preserving curve.}
\end{enumerate}
\end{lens_thm}

\begin{hyperbolic_thm}
Let $K$ be a knot in a closed hyperbolic $3$-manifold $M$.
Then either 
\begin{enumerate}
\item{$\|K\| \ge 1/402$; or}
\item{$K$ is trivial; or} 
\item{$K$ is isotopic to a cable of the core of a Margulis tube.}
\end{enumerate}
\end{hyperbolic_thm}

\begin{small_SFS_thm}
Let $M$ be an atoroidal Seifert fiber space over $S^2$ with three 
exceptional fibers and let $K$ be a knot in $M$. 
Then either
\begin{enumerate}
\item{$\|K\| \ge 1/402$; or}
\item{$K$ is trivial; or}
\item{$K$ is a cable of an exceptional Seifert fiber of $M$; or}
\item{$M$ is a prism manifold and $K$ is a fiber in the Seifert fiber structure of $M$ over $\RP^2$
with at most one exceptional fiber.}
\end{enumerate}
\end{small_SFS_thm}

\begin{torus_thm}
Let $M$ be a closed, irreducible, toroidal 3-manifold, and let $K$ be a 
knot in $M$. 
Then either 
\begin{enumerate}
\item{$\|K\| \ge 1/402$; or}
\item{$K$ is trivial; or}
\item{$K$ is contained in a hyperbolic piece $N$ of the 
JSJ decomposition of $M$ and is isotopic either to a cable of a core of a 
Margulis tube or into a component of $\partial N$; or}
\item{$K$ is contained in a Seifert fiber piece $N$ of the 
JSJ decomposition of $M$ and either
\begin{itemize}
\item[(A)]{$K$ is isotopic to an ordinary fiber or a cable of an exceptional fiber or into $\partial N$, or}
\item[(B)]{$N$ contains a copy $Q$ of the twisted $S^1$ bundle over the M\"obius band and $K$ is contained
in $Q$ as a fiber in this bundle structure;}
\end{itemize}
or}
\item{$M$ is a $T^2$-bundle over $S^1$ with Anosov monodromy and $K$ is contained in a fiber.}
\end{enumerate}
\end{torus_thm}

The subsections \S~\ref{example_subsection} and \S~\ref{torus_bundles_subsection}
discuss constructions giving rise to eight families of examples of knots with arbitrarily small
rational genus, illustrating that all the possibilities listed in the classification theorems 
really do occur.

\subsection{Acknowledgements}

The first author would like to thank Matthew Hedden and Jake Rasmussen for interesting
and stimulating talks they gave at Caltech in 2007, which were the inspiration for this
paper. He would also like to thank Marty Scharlemann and Yoav Rieck for useful conversations
about thin position. The second author would like to thank Constance Leidy and Peter Oszv\'ath for
useful comments. Danny Calegari was partially supported by NSF grant DMS 0707130.

\section{$p$-Seifert surfaces}\label{p_Seifert_section}

This section standardizes definitions, proves some basic facts about rational genus, and
describes how to construct examples of knots with small rational genus, illustrating the
significance of the cases we enumerate in our classification theorems.

\subsection{Definitions}

We formalize definitions in this section. Throughout, all $3$-manifolds considered will be compact, connected and
orientable. A {\em knot} $K$ in a $3$-manifold $M$ is a
tamely embedded $S^1$. If $K$ is null-homologous in $M$,
a {\em Seifert surface} for $K$ is a connected embedded two-sided surface
$S$ in $M$ with $\partial S = K$. The {\em genus} of $K$ is the least genus of a Seifert
surface.

This can be generalized as follows. By analogy with the Thurston norm on $H_2(M)$, we adopt the following notation:

\begin{notation}
If $S$ is a compact, orientable connected surface, define $\chi^-(S):= \min(0,\chi(S))$. If $S$ is a compact, orientable
surface with components $S_i$, define $\chi^-(S) = \sum_i \chi^-(S_i)$. Denote $\eta(S) = -\chi^-(S)/2$.
\end{notation}

\begin{remark}
The normalizing factor of $2$ in the denominator of $\eta$ reflects the fact that Euler characteristic is ``almost'' $-2$ 
times genus for a surface with a bounded number of boundary components.
\end{remark}

\begin{definition}
Let $K$ be a knot in a $3$-manifold $M$, with regular neighborhood $N(K)$. If $p$ is a positive integer, a
{\em $p$-Seifert surface} for $K$ is a compact, oriented surface $S$ embedded in $M - \inte N(K)$ such that
$S \cap \partial N(K) = \partial S$, and $[\partial S] = p[K] \in H_1(N(K))$ (for some choice of orientation on $K$).
In this case we define the {\em norm}, or {\em rational genus} of $K$ by
$$\|K\| = \inf_S \eta(S)/p$$
where the infimum is taken over all $p$ and all $p$-Seifert surfaces $S$ for $K$.
\end{definition}

A $p$-Seifert surface $S$ for $K$ can be extended into $N(K)$ to give a map $S \to M$ which is an
embedding on $\inte S$ and whose restriction $\partial S \to K$ is a (possibly disconnected)
covering map of degree $p$. We will regard a $p$-Seifert surface for $K$ as a singular surface in $M$
in this way in \S~\ref{hyperbolic_knots_section}. 

\begin{remark}
A knot $K$ has a $p$-Seifert surface for some $p$ if and only if $[K]$ has finite order in $H_1(M)$.
\end{remark}

\begin{definition}\label{good_definition}
A $p$-Seifert surface is {\em good} if it satisfies the following properties:
\begin{enumerate}
\item{$S$ is connected}
\item{$S$ is incompressible in $M - \inte N(K)$}
\item{$\partial S$ consists of $q$ parallel, coherently oriented copies of an essential simple closed curve $\rho$
on $\partial N(K)$ that represents $r$ times a generator of $H_1(N(K))$, where $qr=p$}
\end{enumerate}
\end{definition}

\begin{lemma}\label{p_surface_is_good}
Let $S$ be a $p$-Seifert surface for $K$. Then there is a good $p'$-Seifert surface $S'$ for $K$ satisfying
$$\eta(S)/p \ge \eta(S')/p'$$
\end{lemma}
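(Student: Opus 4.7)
My plan is to modify $S$ through a sequence of operations, each of which weakly decreases $\eta/p$, in order to successively enforce the three conditions for being good, and then to extract a single component at the end. Compressions, cappings off of trivial boundary circles, and annular surgery each preserve $[\partial S] \in H_1(N(K))$ (and hence $p$), so all monotonicity reduces to tracking $-\chi^-$.

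I would first maximally compress $S$ in $M \setminus \inte N(K)$: each compression weakly decreases $-\chi^-$, and any sphere components that arise can be discarded, so after finitely many steps every component is incompressible. I would then simplify $\partial S \subset T := \partial N(K)$. Any boundary circle that is inessential on $T$ bounds an innermost disk there, meeting $S$ only in that one circle; capping $S$ along a push-in of this disk eliminates the boundary component, strictly increases $\chi$, and leaves $p$ unchanged because the capping curve is null-homologous in $N(K)$. After finitely many such steps every boundary circle of $S$ is essential on $T$; since disjoint essential simple closed curves on a torus are mutually parallel, they are all parallel to a common essential curve $\rho$. Writing $[\rho] = r[K]$ in $H_1(N(K))$, the hypothesis $p > 0$ forces $r \ne 0$, and orienting $\rho$ appropriately I take $r > 0$.

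I would next eliminate oppositely oriented boundary pairs by annular surgery. If two adjacent parallel components $c_1, c_2$ of $\partial S$ carry opposite induced orientations on $T$, they cobound an annulus $A \subset T$ whose interior meets no other boundary circle; attaching a push-in of $A$ to $S$ along $c_1 \cup c_2$ yields a new embedded surface with $p$ unchanged (since $[c_1]+[c_2]=0$) and total $\chi$ unchanged (since $\chi(A)=0$). Because $-\chi^-$ can only shift when the surgery introduces components of non-negative Euler characteristic, it can only decrease, so $\eta$ weakly drops. Iterating, I arrive at a surface whose boundary is $q$ coherently oriented parallel copies of $\rho$, with $p = qr$.

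Finally, after re-compressing (annular surgery may have destroyed incompressibility) and discarding any closed components (which only decrease $\eta$ without changing $p$), I decompose the result into connected components $S_1, \ldots, S_k$, each incompressible with boundary equal to $q_i$ coherently oriented parallel copies of $\rho$ and $p_i := q_i r$ satisfying $\sum_i p_i = p$. Then
$$\frac{\eta(S)}{p} \;=\; \frac{\sum_i \eta(S_i)}{\sum_i p_i} \;\ge\; \min_i \frac{\eta(S_i)}{p_i},$$
and $S'$ can be taken to be a minimizing component. The step I expect to require the most care is bookkeeping how annular surgery affects $-\chi^-$: attaching $A$ may merge two components into one of non-negative Euler characteristic (for example gluing two disks into a sphere, or an annulus to itself into a torus), and one must confirm that the total $-\chi^-$ is non-increasing in every such case.
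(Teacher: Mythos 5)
Your proposal is correct and follows essentially the same route as the paper: cap off inessential boundary circles, discard closed components, tube adjacent oppositely-oriented boundary pairs, compress, and extract the best connected component via the mediant inequality. The one point you flag --- that tubing might produce components of non-negative Euler characteristic --- is handled in the paper by terminating early whenever a disk component appears (such a disk is itself a good Seifert surface with $\eta = 0$, so the lemma is immediate), after which every component has $\chi \le 0$ and the bookkeeping is trivial; your version also goes through, since $\max(0,-(a+b)) \le \max(0,-a) + \max(0,-b)$ and any closed component created by tubing can be discarded without changing $p$ (the two tubed boundary circles cancel in $H_1(N(K))$).
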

\begin{proof}
Boundary components of $S$ that are inessential in $\partial N(K)$ may be 
capped off with disks, and closed components of $S$ may be discarded; 
neither of these operations increases $\eta$ or changes $p$. 
If some component of $S$ is a disk $D$, then $D$ is a good $p'$-Seifert 
surface for $K$ for some $p'$, and $\eta (D)$ and $\|K\|$ are both zero.
Hence we may assume that every component $S_i$ of $S$ satisfies 
$\chi (S_i) = \chi^- (S_i)$, and has non-empty boundary, each component of 
which is an essential curve in $\partial N(K)$. 

Since a $p$-Seifert surface is embedded, the components of $\partial S$ are all parallel in $\partial N(K)$, and are
therefore all isotopic (though {\it a priori} they might have opposite orientations).
Let $S$ be a $p$-Seifert surface for $K$, and suppose there are a pair of adjacent components of $\partial S$ on
$\partial N(K)$ that are oppositely oriented. Tubing this pair of components does not affect $\eta(S)$ or $p$,
so without loss of generality we may assume that all components are coherently oriented.

If $S$ is compressible in $M-\inte N(K)$ then compressing it along a disk gives a surface $S'$ with
$\partial S' = \partial S$ and $\eta(S') \le \eta(S)$. So we may assume that $S$ is incompressible.

It remains to show that we can take $S$ to be connected. Suppose $S$ is the disjoint union of $S_1$ and $S_2$,
where $\partial S_i$ consists of $q_i$ copies of $\rho$, and each $q_i > 0$. Then we estimate
$$\frac {\eta(S)} {p} = \frac {\eta(S)} {qr} = \frac {\eta(S_1) + \eta(S_2)} {(q_1 + q_2)r} \ge \min \left\{
\frac {\eta(S_1)} {q_1r}, \frac {\eta(S_2)} {q_2r} \right\}$$
\end{proof}

It will be important in the sequel to consider surfaces in manifolds $M$ that meet $\partial M$. 

\begin{definition}
A {\em relative $p$-Seifert surface} $F$ for a knot $K$ in $M$ is an oriented surface, properly embedded in
$M - \inte N(K)$ such that $[\partial F \cap \partial N(K)] = p[K] \in H_1(N(K))$.
\end{definition}

The definition of {\em good} extends to relative $p$-Seifert surfaces, and Lemma~\ref{p_surface_is_good}
generalizes to such surfaces as well and with the same proof, so in the sequel we assume all our $p$-Seifert surfaces,
relative or otherwise, are good.

\begin{notation}
In the sequel, we denote $X:= M - \inte N(K)$.
\end{notation}

\subsection{Thurston norm}

A basic reference for this section is Thurston's paper \cite{Thurston_norm}.

If $K$ is a knot in a closed $3$-manifold $M$, then $X$ is
a compact $3$-manifold with torus boundary, and as is well-known, the kernel of the
inclusion map $$H_1(\partial X;\Q) \to H_1(X;\Q)$$ is $1$-dimensional, and denoted $L$. Consequently,
the kernel of $$H_1(\partial X;\Z) \to H_1(X;\Z)$$ is isomorphic to $\Z$, and is generated by a single
element $m[\rho]$, where $[\rho]$ is primitive in $H_1(\partial X;\Z)$, and $[\rho]$ is represented by a simple loop
$\rho \subset \partial X$ which represents $r$ times a generator of $H_1(N(K))$ (this is the same $\rho$ as before).

Let $\partial^{-1}L$ denote the subspace of $H_2(X,\partial X;\Q)$ that is the preimage of $L$ under the connecting
homomorphism in rational homology. If $S$ is a $p$-Seifert surface for $K$, then $p = qr$ where $m|q$, and 
$[S] \in \partial^{-1}L$. Consider the affine rational subspace $\partial^{-1}[\rho]/r \subset \partial^{-1}L$.
The multiple $[S]/p \in \partial^{-1}[\rho]/r$, and $\eta(S)/p = \|S\|_T/2p$ where $\|\cdot\|_T$ denotes the Thurston
norm of a surface. Hence, by the definition of rational genus and of Thurston norm, we obtain the following formula:

\begin{lemma}\label{rational_genus_is_Thurston_norm}
There is an equality
$$\|K\| = \inf_{A \in \partial^{-1}[\rho]/r}\|A\|_T/2$$
where $\|A\|_T$ denotes the Thurston norm of the (rational) homology class $A$.
\end{lemma}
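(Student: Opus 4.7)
The goal is to verify both inequalities between $\|K\|$ and $\inf_{A} \|A\|_T/2$, where $A$ ranges over $\partial^{-1}[\rho]/r$. Both directions are essentially a bookkeeping exercise unpacking the definitions, once one is careful about how the normalizing factors $p$, $q$, and $r$ interact.

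\textbf{$\|K\| \ge \inf_A \|A\|_T/2$:} I would take any good $p$-Seifert surface $S$. By Lemma~\ref{p_surface_is_good} it suffices to work with good $S$, so $S$ is connected, incompressible, and has $\chi^-(S) = \chi(S)$; hence $\|S\|_T = -\chi(S) = 2\eta(S)$. Because $S$ is good, $\partial S$ consists of $q$ parallel coherently oriented copies of $\rho$, with $p = qr$, so $\partial[S] = q[\rho]$ in $H_1(\partial X;\Z)$, and dividing by $p$ gives $[S]/p \in \partial^{-1}[\rho]/r$. Since the Thurston norm is linear on rays and subadditive on classes represented by embedded surfaces, $\|[S]/p\|_T \le \|S\|_T/p = 2\eta(S)/p$. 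Taking the infimum over all good $p$-Seifert surfaces on the right gives the desired inequality.

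\textbf{$\|K\| \le \inf_A \|A\|_T/2$:} Given a rational class $A \in \partial^{-1}[\rho]/r$ and $\epsilon > 0$, by the definition of the Thurston norm on rational homology I would choose an integer $n$ and an integrally embedded surface $S$ with $[S] = nA$ and $\|S\|_T \le n(\|A\|_T + \epsilon)$. For $nA$ to be an integer class with integer boundary, the boundary $n[\rho]/r$ must be an integer multiple of the primitive class $[\rho]$, forcing $r \mid n$; write $n = kr$. Then $\partial [S] = k[\rho]$, which is $kr[K] = n[K]$ in $H_1(N(K))$, so $S$ is an $n$-Seifert surface for $K$. Discarding disk and sphere components (which do not contribute to $\|S\|_T$), we get
\[
\frac{\eta(S)}{n} = \frac{-\chi^-(S)/2}{n} = \frac{\|S\|_T}{2n} \le \frac{\|A\|_T + \epsilon}{2}.
\]
By Lemma~\ref{p_surface_is_good}, there is a good $p'$-Seifert surface $S'$ with $\eta(S')/p' \le \eta(S)/n$, hence $\|K\| \le (\|A\|_T + \epsilon)/2$. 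Letting $\epsilon \to 0$ and taking the infimum over $A$ completes the proof.

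The main (minor) obstacle is keeping the divisibility conditions straight: one must check that the Thurston norm infimum can be realized by integer classes $nA$ whose boundaries really do correspond to $p$-Seifert surfaces, which is where the requirement $r \mid n$ enters. Once this is handled, the lemma is essentially a direct translation between the two definitions.
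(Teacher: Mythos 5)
Your proof is correct and follows essentially the same route as the paper, which states the lemma as an immediate consequence of the two observations you verify in detail: that $[S]/p \in \partial^{-1}[\rho]/r$ with $\eta(S)/p = \|S\|_T/2p$ for a ($p$-)Seifert surface $S$, and conversely that an integral multiple $nA$ of a class $A \in \partial^{-1}[\rho]/r$ (necessarily with $r \mid n$) is represented by a norm-minimizing embedded surface which is an $n$-Seifert surface. Your careful treatment of the divisibility condition $r \mid n$ is exactly the bookkeeping the paper leaves implicit.
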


Since $\|\cdot\|_T$ is a convex, non-negative, piecewise rational linear function on $H_2(X,\partial X;\R)$, 
the infimum of $\|\cdot\|_T/2$ is {\em achieved} (on some rational subpolyhedron)
on the rational affine subspace $\partial^{-1}[\rho]/r$. 
Consequently, we have:

\begin{proposition}\label{rational_genus_is_achieved}
Let $K$ be a knot. The rational genus $\|K\|$ is equal to $\eta(S)/p$ for some $p$-Seifert surface $S$ and some $p$.
Therefore $\|K\|$ is rational. Moreover, there is an algorithm to find $S$ and compute $\|K\|$.
\end{proposition}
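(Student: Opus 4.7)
My plan is to deduce everything from Lemma~\ref{rational_genus_is_Thurston_norm} combined with standard properties of the Thurston norm developed in \cite{Thurston_norm}. The key observation is that $\|\cdot\|_T$ is a convex, non-negative, piecewise rational linear function on $H_2(X,\partial X;\R)$, and restricting such a function to the rational affine subspace $\partial^{-1}[\rho]/r$ preserves these properties. Hence the infimum of $\|\cdot\|_T/2$ on this subspace is attained on a (nonempty) rational subpolyhedron; I would pick any rational point $A_0$ in that subpolyhedron to serve as the homological witness to the infimum.

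Next I would upgrade $A_0$ to an honest $p$-Seifert surface. Since $A_0$ is rational, there is a smallest positive integer $p$ with $pA_0 \in H_2(X,\partial X;\Z)$; and since $\partial A_0 = [\rho]/r$, we must have $r \mid p$, so writing $p = qr$ one gets $\partial(pA_0) = q[\rho]$. Thurston's realization theorem from \cite{Thurston_norm} then provides a properly embedded oriented surface $S$ with $[S]=pA_0$ in $H_2(X,\partial X;\Z)$, with $\partial S$ a union of $q$ parallel copies of $\rho$, and with $\|[S]\|_T = p\|A_0\|_T$. In $H_1(N(K))$ we have $[\partial S] = q[\rho] = p[K]$, so $S$ is a $p$-Seifert surface, and
$$\frac{\eta(S)}{p} \;=\; \frac{\|[S]\|_T}{2p} \;=\; \frac{\|A_0\|_T}{2} \;=\; \|K\|.$$
Rationality of $\|K\|$ is then immediate: the vertices of the Thurston norm polytope are rational by piecewise rational linearity, so $\|\cdot\|_T$ takes rational values at rational points, and in particular at $A_0$.

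For the algorithmic statement I would invoke normal surface theory: fixing a triangulation of $X$, the Thurston norm polytope can be computed in finite time from the vertex normal surfaces, after which minimizing a rational piecewise linear function along the rational affine subspace $\partial^{-1}[\rho]/r$ is a finite-dimensional linear programming problem. This finds $A_0$ and $p$, and the surface $S$ then comes out of Thurston's realization procedure, which is effective on integral classes in a fixed triangulation.

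The only nontrivial input in all of this is Thurston's realization theorem that each integral class in $H_2(X,\partial X;\Z)$ is represented by a properly embedded oriented norm-minimizing surface, and that is the step I expect to shoulder the substantive content of the proof. Everything else is routine manipulation with piecewise rational linear convex functions on rational polyhedra.
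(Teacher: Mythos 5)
Your proposal is correct and follows essentially the same route as the paper: the paper likewise deduces attainment and rationality from the fact that $\|\cdot\|_T$ is convex, non-negative, and piecewise rational linear (so its infimum over the rational affine subspace $\partial^{-1}[\rho]/r$ is achieved on a rational subpolyhedron), realizes the minimizing class by an embedded norm-minimizing surface \`a la Thurston, and gets the algorithm from normal surface theory and linear programming. Your write-up merely spells out the realization step (producing $S$ from $pA_0$) in more detail than the paper does.
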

\begin{proof}
There is a (straightforward) algorithm to compute the Thurston norm, described in \cite{Thurston_norm}, and to find a 
norm-minimizing surface in any integral class (note that such a surface can be taken to be normal relative to a
fixed triangulation, and therefore may be found by linear programming in normal surface space).
\end{proof}

Although Proposition~\ref{rational_genus_is_achieved} is included for completeness, 
it is not essential for the remainder of the paper, and it is generally good enough in 
the sequel to work with a $p$-Seifert surface that comes close to realizing $\|K\|$.

\medskip

The first thing one wants to know about an invariant is when it vanishes. 
\begin{theorem}\label{vanishing_theorem}
Let $K$ be a knot in a 3-manifold $M$. 
Then $\|K\| =0$ if and only if either 
\begin{enumerate}
\item{$K$ bounds a disk in $M$; or}
\item{$K$ is the core of a genus 1 Heegaard splitting of a lens
space summand of $M$; or}
\item{$K$ is the fiber of multiplicity $r$ in a Seifert fiber 
subspace of $M$ whose base orbifold is a M\"obius band with one orbifold
point of order $r\ge 1$.}
\end{enumerate}
\end{theorem}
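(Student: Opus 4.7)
The plan is to apply Proposition~\ref{rational_genus_is_achieved} to obtain a good $p$-Seifert surface $S$ realizing $\|K\|=0$, so that $\eta(S)=0$. Since $S$ is connected with nonempty boundary consisting of $q$ coherent copies of a primitive slope $\rho$ on $\partial X$, the condition $\chi(S)\ge 0$ forces $S$ to be either a disk ($q=1$) or an annulus ($q=2$). After reducing to the case that $M$ is prime (since $\|K\|$ depends only on the summand containing $K$), I treat these two shapes separately.

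If $S$ is a disk, then either $K$ lies in a ball $B\subset M$, in which case $(B,K)$ is a genus-zero knot in $S^3$ --- necessarily the unknot, giving case~(1) --- or $X=M\setminus\inte N(K)$ is irreducible. In the latter case $S$ is a compressing disk for $\partial X=T^2$; cutting along the non-separating disk $S$ and invoking irreducibility to cap the resulting sphere with a ball shows $X$ is a solid torus. Then $M$ is a union of two solid tori glued along a torus; since $[K]$ has finite order in $H_1(M)$, $M=S^1\times S^2$ is excluded, leaving $M\in\{S^3,L(p,q)\}$ with $K$ as the core of a genus-one Heegaard splitting: case~(1) or case~(2).

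If $S$ is an annulus, then the coherent orientation of $\partial S$ yields $[\partial S]\ne 0\in H_1(\partial X)$, ruling out $S$ being boundary-parallel (a boundary-parallel annulus always has null-homologous boundary). If $S$ is $\partial$-compressible, the $\partial$-compression along an arc joining the two boundary components produces a good disk Seifert surface with boundary $[\rho]$, reducing to the disk case; so I may assume $S$ is essential. The central step is then to recognize that $S$ determines a Seifert fibered piece in $M$: a regular neighborhood $Y$ of $S\cup\partial X$ in $X$ inherits a Seifert fibration whose regular fiber on $\partial X$ has slope $\rho$ and in which $S$ is vertical, with base orbifold a M\"obius band with an open disk removed --- the arc in the base corresponding to $S$ being orientation-reversing, a fact forced by the requirement that the two boundary circles of $S$ (intrinsically oppositely oriented as the boundary of an oriented annulus) appear coherently in $\partial X$. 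Setting $N:=Y\cup N(K)$, the fibration extends across $N(K)$: since $\mu\cdot\rho=r$, the Dehn filling along the meridian $\mu$ introduces an exceptional fiber of multiplicity $r$ that fills in the missing disk of the base, giving $N$ the structure of a Seifert fibered subspace of $M$ over a M\"obius band with one cone point of order $r$ and $K$ as the exceptional fiber --- case~(3).

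The converse direction is straightforward: each of the three cases admits an explicit $p$-Seifert surface with $\eta=0$, namely the spanning disk of the unknot in case~(1), the meridian disk of the solid torus complementary to $N(K)$ in a lens space in case~(2), and the vertical annulus over an orientation-reversing arc in the punctured M\"obius base in case~(3) (which is a good $2r$-Seifert surface). The principal technical obstacle is the Seifert-fibration recognition step in the essential annulus case: producing the Seifert structure on the neighborhood $Y$ of $S\cup\partial X$ and identifying its base as a punctured M\"obius band. This rests on standard properties of essential annuli in irreducible 3-manifolds with toroidal boundary, together with an orientation-transport computation showing that the coherent-orientation condition on $\partial S$ corresponds precisely to orientation-reversal of the base arc in the M\"obius band.
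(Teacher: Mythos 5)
Your proposal is correct and its skeleton is the paper's: use Lemma~\ref{p_surface_is_good} and Proposition~\ref{rational_genus_is_achieved} to get a good $p$-Seifert surface that is a disk or annulus, then read off conclusions (1)--(3), with the coherent orientation of $\partial S$ forcing the M\"obius-band base in the annulus case. Where you diverge is in insisting on \emph{global} conclusions where the paper argues \emph{locally}: the statement only claims a lens space \emph{summand} and a Seifert fiber \emph{subspace}, so the paper simply observes that a regular neighborhood of $N(K)\cup S$ is a punctured lens space (disk case, $p>1$) or the desired Seifert piece (annulus case), with no need to reduce to $M$ prime, to prove $X$ is a solid torus, or to make the annulus essential. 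Your global route buys a slightly stronger-looking conclusion in the disk case but at the cost of edge cases your dichotomy glosses over (e.g.\ $M$ prime but reducible, i.e.\ $M=S^1\times S^2$, where ``$X$ reducible implies $K$ lies in a ball'' needs a separate word), all of which the neighborhood argument sidesteps. Two smaller points: your $\partial$-compression branch for the annulus is vacuous rather than a genuine reduction --- the band sum of two coherently oriented copies of $\rho$ would be an embedded curve on $\partial N(K)$ representing $2[\rho]$, which is neither trivial nor primitive, so no such $\partial$-compression exists; in particular the resulting disk would not have boundary $[\rho]$ as you assert. And your explicit verification of the converse direction (including the vertical annulus over an orientation-reversing arc as a good $2r$-Seifert surface) is a welcome addition that the paper leaves implicit.
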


\begin{proof} 
Suppose $\|K\| =0$. 
Then by Lemma~\ref{p_surface_is_good} and Proposition~\ref{rational_genus_is_achieved},
$K$ has a good $p$-Seifert surface with $\chi^-(S)=0$, i.e.\ $S$ is a disk or annulus.

First assume $S$ is a disk. 
If $p=1$ we have conclusion~(1). 
If $p>1$ then a regular neighborhood of $N(K)\cup S$ is a punctured 
lens space with fundamental group $\Z/p\Z$, with $K$ as a core of 
a genus~1 Heegaard splitting.

If $S$ is an annulus, note that both boundary components of $S$ wrap with 
the same orientation $r$ times around $K$. 
A regular neighborhood of $N(K)\cup S$ evidently has the desired structure.
\end{proof}

\begin{remark}
Since $M$ is orientable, the total space of the Seifert fiber subspace in 
bullet~(3) of Theorem~\ref{vanishing_theorem} is orientable; 
i.e.\ it is a twisted $S^1$ bundle over an orbifold 
M\"obius band. 
There is no suggestion that it is essential in $M$. The situation described in~(3) arises
in case (2)(b) of Theorem~\ref{reducible_theorem}, case (3) of Theorem~\ref{lens_theorem}, case (4) of
Theorem~\ref{small_sfs_theorem}, and case (4)(B) of Theorem~\ref{toroidal_theorem}.
\end{remark}

Under suitable homological conditions on $M$ and $K$, the analysis simplifies considerably:

\begin{lemma}\label{homology_lemma}
Let $K$ be a knot in a $\mathbb{Q}$-homology $3$-sphere and let $S$ be a connected 
$p$-Seifert surface for $K$. Then $p$ is the order of $[K]$ in $H_1(M)$.
\end{lemma}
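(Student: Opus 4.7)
The plan is to prove the lemma in two stages: first the divisibility $n \mid p$ from the $p$-Seifert condition, then the equality $p = n$ using the rational-homology-sphere hypothesis together with the connectedness of $S$. The divisibility is immediate: since $[\partial S] = p[K] \in H_1(N(K))$ and $\partial S$ bounds $S$ in $X = M - \inte N(K)$, pushing into $M$ gives $p[K] = 0 \in H_1(M)$, so $n \mid p$ where $n$ is the order of $[K]$.

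For the equality, I would work in $H_1(T)$ for $T = \partial N(K)$. Writing $[\partial S] = a\mu + p\lambda$ (the $\lambda$-coefficient is $p$ since $\mu$ dies and $\lambda \mapsto [K]$ under $H_1(T) \to H_1(N(K))$), the class $[\partial S]$ lies in the kernel of $H_1(T) \to H_1(X)$. Since $M$ is a rational homology sphere, $H_1(X;\Q) \cong \Q$ is generated by $[\mu]$, so this kernel has rank one, generated by a primitive class $\rho_0 = r_0 \lambda + s_0 \mu$ (the rational longitude of $K$), with $\gcd(r_0, s_0) = 1$. A short computation identifying the order of $[\lambda]$ in $H_1(X)/\langle[\mu]\rangle \cong H_1(M)$ gives $r_0 = n$. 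Hence $[\partial S] = c\rho_0$ for some integer $c$, and $p = cn$.

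The remaining step is to conclude $c = 1$ using the connectedness of $S$. Equivalently, $[S] \in H_2(X, \partial X; \Z)$ is $c$ times the primitive generator, and for a connected oriented properly embedded surface with boundary on a torus component of $\partial X$, this class must itself be primitive. The topological content is that $\partial S$ is an embedded $1$-manifold on $T$ of class $c\rho_0$, and the combination of $S$ being connected and orientable, together with the $2$-sidedness of $S$ in the oriented $X$ at each boundary component, precludes the net coherent count from exceeding one: tracking the induced orientation along a continuous normal field through $S$ forces the oriented boundary components to cancel in pairs beyond a single net copy of $\rho_0$. This orientation-and-embedding analysis at the boundary torus is the main obstacle of the proof.
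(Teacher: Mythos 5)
Your skeleton is the same as the paper's: identify $[\partial S]$ as a multiple $c$ of the generator of $\ker\bigl(H_1(\partial X;\Z)\to H_1(X;\Z)\bigr)\cong\Z$, check that this generator contributes exactly $n=\mathrm{ord}[K]$ to $H_1(N(K))$, so $p=cn$, and then use connectedness to force $c=1$. The divisibility $n\mid p$ is fine. But your description of the kernel contains an error: the kernel need not be generated by a class that is primitive in $H_1(\partial X)$. As the paper records in the Thurston norm subsection, the generator is $m[\rho]$ with $[\rho]$ primitive and $m\ge 1$, and $m>1$ genuinely occurs for knots in $\Q$-homology spheres (take $X$ the twisted $I$-bundle over the Klein bottle, $\pi_1=\langle a,b\mid bab^{-1}a\rangle$, whose rational longitude $a$ maps to the $\Z/2$-factor of $H_1(X)$, so the kernel is $\Z\langle 2a\rangle$; filling along $b^2$ gives a $\Q$-homology sphere with $m=2$). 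Hence ``$\gcd(r_0,s_0)=1$'' and ``$r_0=n$'' cannot both hold in general; the correct statement is that the $\lambda$-coefficient of the (possibly imprimitive) kernel generator equals $n$. This slip is repairable and one still arrives at $p=cn$ with $c$ the multiplicity of $[\partial S]$ over the kernel generator.

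The fatal gap is the last step, which you correctly identify as the crux but do not actually prove. The assertion that connectedness, orientability and two-sidedness force the ``net coherent count'' of $\partial S$ over $\rho_0$ to be at most one --- that boundary components ``cancel in pairs beyond a single net copy'' --- is false. By definition a good $p$-Seifert surface has $q$ parallel, \emph{coherently} oriented copies of $\rho$ as boundary, nothing cancels, and connected examples with $q>1$ exist: in the situation of Theorem~\ref{vanishing_theorem}~(3) one has a connected embedded annulus both of whose boundary circles are coherently oriented copies of the same primitive curve, and in general a connected surface realizing the generator of $H_2(X,\partial X)\cong\Z$ has exactly $m$ coherently oriented boundary curves. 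Moreover no normal-field analysis near $\partial N(K)$ can detect $c$: two coherently oriented parallel copies of a connected surface realizing the generator are indistinguishable from a single copy in a neighborhood of each boundary circle, yet represent twice the class; the obstruction to joining them into one connected oriented surface (a tube between coherently oriented parallel sheets cannot be glued orientation-compatibly) is global, not local at the boundary torus. The statement you actually need is that a connected oriented properly embedded surface represents a primitive class of $H_2(X,\partial X;\Z)\cong\Z$; this is Lemma~1 of \cite{Thurston_norm}, whose proof uses the $\Z/k$-valued level function on $X-S$ (signed intersection with $S$) to split $S$ into $k$ disjoint unions of components each representing the generator, whence $k=1$ when $S$ is connected. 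The paper simply cites that lemma here; without it, or an equivalent global argument, your proof does not close.
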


\begin{proof} 
Since $M$ is a $\mathbb{Q}$-homology sphere, $H_2(X)=0$ and hence the boundary map
$H_2(X,\partial X) \to H_1(\partial X)$ is injective, with image
$\ker (H_1(\partial X) \to H_1(X) )\cong \mathbb{Z}$. Let $x$ be a generator
of $H_2(X,\partial X)$. Then the image of $x$ under the composition 
$$H_2(X,\partial X) \to H_1(\partial X) \to H_1(N(K)) \cong \mathbb{Z}$$
is $p_0$, say, the order of $[K]$ in $H_1(M)$. Let $S$ be a connected $p$-Seifert
surface for $K$. Then $[S]=kx$ and $p=kp_0$ for some $k>0$. But since $S$ is
connected, $k=1$ by Lemma~1 of \cite{Thurston_norm}.
\end{proof}

If $K$ is a knot in a homology $3$-sphere, the (ordinary) {\em genus of $K$}, denoted $g(K)$, is
the minimal genus of any Seifert surface for $K$. Lemma~\ref{homology_lemma} reduces
the study of rational genus to that of the usual genus in homology spheres:

\begin{corollary}\label{homology_corollary}
If $K$ is a knot in a homology $3$-sphere then 
$$\|K\| = \begin{cases} 
0\ ,&\text{ if $g(K)=0$;}\\
\noalign{\vskip6pt}
g(K) -1/2\ ,&\text{ if $g(K)>0$.}
\end{cases}$$
\end{corollary}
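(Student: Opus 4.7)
My plan is to reduce the rational genus to the ordinary genus by showing that, in a homology sphere, the rational genus is realized by an honest Seifert surface, and then to compute $\eta$ of such a surface explicitly.

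First, I would invoke Proposition~\ref{rational_genus_is_achieved} and Lemma~\ref{p_surface_is_good} to produce a good $p$-Seifert surface $S$ with $\|K\| = \eta(S)/p$; in particular $S$ is connected and incompressible. Since $M$ is a homology sphere, $[K] = 0 \in H_1(M)$, so the order of $[K]$ is $1$. Lemma~\ref{homology_lemma} then forces $p = 1$, so $S$ is a connected Seifert surface for $K$ in the classical sense, with $\partial S = K$. Thus
\[
\|K\| = \eta(S) = -\chi^-(S)/2
\]
for some connected Seifert surface $S$, and conversely every connected Seifert surface gives an upper bound on $\|K\|$ of this form.

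Next I would compute. If $g(K) = 0$ then $K$ bounds a disk $D$, $\chi(D) = 1 > 0$, so $\chi^-(D) = 0$ and hence $\|K\| \le \eta(D) = 0$; since $\|K\| \ge 0$ always, $\|K\| = 0$. If $g(K) > 0$ then every connected Seifert surface $S$ has $\chi(S) = 1 - 2g(S) \le -1$, so $\chi^-(S) = 1 - 2g(S)$ and $\eta(S) = g(S) - 1/2$. Minimizing over connected Seifert surfaces yields $\|K\| = g(K) - 1/2$, realized by a minimal-genus Seifert surface.

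There is essentially no obstacle here beyond citing the preceding lemmas correctly; the substantive content is Lemma~\ref{homology_lemma}, which pins $p = 1$ and thereby collapses the $\inf_{S,p} \eta(S)/p$ to an $\inf_S \eta(S)$ over honest Seifert surfaces. The only mild subtlety is that $\eta$ uses $\chi^-$ rather than $\chi$, so one must separate the genus-$0$ case (where $\chi^-$ kills the disk's contribution) from the genus-$\ge 1$ case (where $\chi^- = \chi$).
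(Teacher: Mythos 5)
Your proof is correct and follows the same route as the paper: apply Lemma~\ref{homology_lemma} to force $p=1$, reducing $\|K\|$ to $\eta(S)$ for a minimal genus Seifert surface, then compute $\eta$ separately for the disk and positive-genus cases. The paper's version is just a terser statement of exactly this argument.
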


\begin{proof} 
By Lemma~\ref{homology_lemma}, 
$\|K\| = \eta (S)$, where $S$ is a minimal genus Seifert surface for $K$. 
If $S$ is a disk, $\|K\| =g(K)=0$. Otherwise, $\eta (S) = (2g(S)-1)/2 = g(S) - 1/2$.
\end{proof}

The following lemma will allow us to construct knots in $3$-manifolds 
with arbitrarily small (non-zero) rational genus. 

\begin{lemma}\label{surgery_lemma} 
Let $K'$ be a knot in a homology 3-sphere $M'$.
Let $M$ be the manifold obtained by $m/n$-Dehn surgery on $K'$, where
$m>0$, and let $K\subset M$ be the core of the surgery solid torus. 
Then $\|K\| = \|K'\|/m$.
\end{lemma}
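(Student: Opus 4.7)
The plan is to prove both inequalities $\|K\| \le \|K'\|/m$ and $\|K\| \ge \|K'\|/m$ separately, exploiting the natural identification $X := M \setminus \inte N(K) = M' \setminus \inte N(K')$ arising from surgery. Fix meridian--longitude bases $(\mu',\lambda')$ for $K'$ and $(\mu_K,\lambda_K)$ for $K$, related by $\mu_K = m\mu' + n\lambda'$ and $\lambda_K = a\mu' + b\lambda'$ with $mb - na = \pm 1$. Inverting yields $[\lambda'] = \pm m[K]$ in $H_1(N(K))$, and $H_1(M) = \Z/m\Z$ is generated by $[K]$; in particular $M$ is a rational homology sphere.

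For the upper bound, any Seifert surface $S'$ for $K'$ in $M'$ sits in $X$ with $\partial S' = \lambda'$, and the identity $[\lambda'] = \pm m[K]$ shows $S'$ is an $m$-Seifert surface for $K$. Hence $\|K\| \le \eta(S')/m$, and minimizing over $S'$ and invoking Corollary~\ref{homology_corollary} gives $\|K\| \le \|K'\|/m$. For the lower bound, let $S$ be any good $p$-Seifert surface for $K$ (by Lemma~\ref{p_surface_is_good} these suffice to compute $\|K\|$). By Lemma~\ref{homology_lemma}, $p = m$. Writing $\partial S$ as $q$ coherent copies of a curve $\rho$ with $[\rho] = s\mu_K + r\lambda_K$ and $qr = m$, and converting to $(\mu',\lambda')$-coordinates yields $[\rho] = (sm + ra)\mu' + (sn + rb)\lambda'$. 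The requirement that $q[\rho]$ vanish in $H_1(X) = \Z$ (generated by $\mu'$, with $\lambda'$ trivial) forces $sm + ra = 0$; since $\gcd(m,a) = 1$ (from $mb - na = \pm 1$), $m \mid r$, and combined with $r \mid m$ this forces $r = m$ and $q = 1$, so $[\rho] = \pm[\lambda']$. Thus $S$, extended by an annulus in $N(K')$, is a Seifert surface for $K'$, giving $\eta(S) \ge \|K'\|$ via Corollary~\ref{homology_corollary}. Taking the infimum yields $\|K\| \ge \|K'\|/m$.

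The main obstacle is the arithmetic step forcing $r = m$ and $q = 1$: this hinges on the coprimality $\gcd(m,a) = 1$, itself a consequence of the unimodularity of the surgery gluing matrix. The degenerate case where $K'$ is trivial (so both sides equal zero) is consistent with the argument: $S'$ may be taken to be a disk with $\eta(S') = 0$, and the conclusion $\|K\| = 0$ also follows from Theorem~\ref{vanishing_theorem} since $M = M' \# L(m,n)$ with $K$ a core of a Heegaard torus of the lens space summand.
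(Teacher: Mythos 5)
Your proof is correct and follows essentially the same route as the paper: both arguments identify the exteriors of $K$ and $K'$, check that a Seifert surface for $K'$ is an $m$-Seifert surface for $K$, and use the homological constraint in the exterior (plus connectedness of a good surface) to force the boundary of a good $p$-Seifert surface for $K$ to be a single longitude $\lambda'$ of $K'$, hence a Seifert surface for $K'$. Your explicit gluing-matrix arithmetic just makes concrete the step that the paper delegates to the proof of Lemma~\ref{homology_lemma}.
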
 

\begin{proof} 
Note that $[K]$ has order $m$ in $H_1(M) = \Z/m\Z$. 

Let $S$ be a good $p$-Seifert surface for $K$ in $M$ such that 
$\|K\| = \eta (S)/p$. The restriction of $S$ to $M-K = M'-K'$ extends to a good $p'$-Seifert surface
for $K'$ (which by abuse of notation we call $S$) where $p'=p/m$. 
By the proof of Lemma~\ref{homology_lemma}, $p'=1$ and $S$ is a Seifert 
surface for $K'$ in $M'$. Conversely a Seifert surface for $K'$ can also be thought of as a
good $m$-Seifert surface for $K$. Therefore $\|K'\| =\eta (S)$ and $p=m$. 
Hence $\|K\| = \|K'\|/m$.
\end{proof}

\subsection{Connect sums}

We now examine the behavior of rational genus under connected sum. 
In this context it is convenient to say that a knot $K$ in $M$ is 
{\em $p$-trivial\/} if it has a $p$-Seifert surface that is a disk.
If $p=1$ then $K$ bounds a disk, and is {\em trivial\/}. 
If $p>1$ then $K$ is the core of a genus~1 Heegaard splitting of a lens space
summand of $M$ with fundamental group $\Z/p\Z$.

\begin{remark}\label{remark_200}
If $K$ is $p$-trivial for some $p$ then clearly $\partial N(K)$ is 
compressible in $X$. 
Conversely, a compressing disk for $\partial N(K)$ in $X$ is either a 
$p$-Seifert surface for $K$, for some $p\ge1$, or has boundary a meridian 
of $K$, in which case $K$ is isotopic to $S^1\times\{\text{point}\}$ in some 
$S^1\times S^2$ summand of $M$.
So for rationally null-homologous knots, being $p$-trivial for some $p$ is 
equivalent to $\partial N(K)$ being compressible in $X$.
\end{remark}

\begin{theorem}\label{connect_sum_theorem} 
Let $K_1$ and $K_2$ be knots in $3$-manifolds $M_1$ and $M_2$. 
Then 

\begin{enumerate}
\item{if $K_1$ is $p_1$-trivial and $K_2$ is trivial then 
$K_1\,\#\, K_2$ is $p_1$-trivial;}
\item{
\begin{equation*}
\|K_1\,\#\, K_2\| = 
\begin{cases} 
\ds \|K_1\| + \|K_2\| + \frac12 \quad
\text{if $K_1$ and $K_2$ are not $p$-trivial for any $p$};\\
\noalign{\vskip6pt}
\ds \|K_1\| +\frac12 - \frac1{2p_2} \quad
\text{if $K_2$ is $p_2$-trivial, $K_1$ is not $p$-trivial for any $p$};\\
\noalign{\vskip6pt}
\ds \frac12 - \frac{(p_1+p_2)}{2p_1p_2} \quad
\text{if $K_i$ is $p_i$-trivial, $p_i\ge 2$, $i=1,2$}.
\end{cases}
\end{equation*}}
\end{enumerate}
\end{theorem}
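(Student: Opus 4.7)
My strategy is to cut along the summing sphere $\Sigma\subset M_1\# M_2$. Let $X=(M_1\# M_2)-\inte N(K_1\# K_2)$ and let $\Sigma'=\Sigma\cap X$, an annulus whose boundary consists of two parallel meridians $\mu_L,\mu_R$ of $K_1\# K_2$. Cutting $X$ along $\Sigma'$ yields $X=Y_1\cup_{\Sigma'}Y_2$, with each $Y_i$ homeomorphic to $X_i$ in a way that identifies $\Sigma'$ with an annular sub-region $A_i\subset\partial N(K_i)$ cut off by two parallel meridians of $K_i$; a spanning arc on $\Sigma'$ then becomes a longitudinal arc on $A_i$.

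For the upper bounds (including part~(1)), I would construct a $p$-Seifert surface $S$ for $K_1\# K_2$ by starting from good $p_i$-Seifert surfaces $S_i$ for $K_i$: take $S_i=D_i$ a disk when $K_i$ is $p_i$-trivial, and otherwise one with $\eta(S_i)/p_i$ close to $\|K_i\|$. Set $p=\mathrm{lcm}(p_1,p_2)$, $n_i=p/p_i$, take $n_i$ parallel copies of $S_i$ on the $Y_i$ side, and join them by $p$ disjoint coherently-oriented spanning bands on $\Sigma'$. The resulting embedded oriented $p$-Seifert surface satisfies
\[
\chi(S)=n_1\chi(S_1)+n_2\chi(S_2)-p.
\]
In part~(1), with $p_2=1$ and both $S_i$ disks, this gives $\chi(S)=1+p_1-p_1=1$, so $S$ is a $p_1$-Seifert disk for $K_1\# K_2$. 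In part~(2), computing $\eta(S)/p$ in the three subcases (both $S_i$ non-disk; $S_2$ disk and $S_1$ non-disk; both disks with $p_i\ge 2$) yields exactly the three claimed upper bounds.

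For the matching lower bounds, take a good $p$-Seifert surface $S$ for $K_1\# K_2$ realizing (or approaching) $\|K_1\# K_2\|$ and arrange $S\pitchfork\Sigma$. Standard innermost-disk arguments, using the incompressibility of $S$, eliminate closed components of $S\cap\Sigma$; outermost-arc $\partial$-compressions then eliminate non-spanning arcs; neither operation increases $\eta(S)/p$. After simplification $S\cap\Sigma$ consists of exactly $p$ spanning arcs, by counting the endpoints of $\partial S\cap(\mu_L\cup\mu_R)$. Cutting $S$ along $\Sigma$ produces $S_i^0=S\cap Y_i$; under the identification $Y_i\cong X_i$, each spanning arc of $S\cap\Sigma'$ becomes a longitudinal arc on $A_i$, and these join with arcs of $\partial S\cap Y_i$ on the complementary annulus of $\partial N(K_i)$ to form $p$ longitudes of $K_i$, showing that each $S_i^0$ is a $p$-Seifert surface for $K_i$.

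The cutting formula $\chi(S)=\chi(S_1^0)+\chi(S_2^0)-p$ combined with the decomposition $\chi(S_i^0)=\chi^-(S_i^0)+d_i$, where $d_i$ is the number of disk components of $S_i^0$, gives
\[
\frac{\eta(S)}{p}=\frac{\eta(S_1^0)}{p}+\frac{\eta(S_2^0)}{p}+\frac{p-d_1-d_2}{2p}.
\]
Each disk component of $S_i^0$ is a $p_i$-Seifert disk for $K_i$, so $d_ip_i\le p$ and $d_i=0$ whenever $K_i$ is not $p$-trivial; and $\eta(S_i^0)/p\ge\|K_i\|$ in every case. Casewise substitution of these inequalities matches the three upper bounds exactly. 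The main obstacle is the $\partial$-compression step in the lower bound: one must check that each boundary compression along an outermost non-spanning arc of $S\cap\Sigma$ produces a surface still admissible as a $p$-Seifert surface for $K_1\# K_2$ with $\eta/p$ not increased, which requires some care when the arc of $\partial S$ involved is essential on $\partial N(K_1\# K_2)$.
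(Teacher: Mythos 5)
Your proposal follows essentially the same route as the paper: construct the upper bound by banding together parallel copies of Seifert surfaces for $K_1$ and $K_2$ along the summing annulus (the paper uses $p_2$ copies of $S_1$ and $p_1$ copies of $S_2$ rather than the lcm normalization, which changes nothing), and obtain the lower bound by cutting a good $p$-Seifert surface for $K_1\,\#\,K_2$ along that annulus and running the same disk-count bookkeeping $d_ip_i\le p$. The one step you flag as the main obstacle --- boundary-compressing away non-spanning arcs of $S\cap\Sigma'$ --- does not actually arise, and the paper dispenses with it in one sentence: since $S$ is good, the components of $\partial S$ are coherently oriented on $\partial N(K_1\,\#\,K_2)$, so all $p$ points of $\partial S\cap\mu_L$ have one sign and all $p$ points of $\partial S\cap\mu_R$ have the opposite sign; as an arc of intersection must join endpoints of opposite sign, every arc of $S\cap\Sigma'$ is automatically spanning. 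This also forces every closed component of $S\cap\Sigma'$ to be inessential in the annulus (an essential circle would separate $\mu_L$ from $\mu_R$ and hence meet a spanning arc), so only the innermost-disk surgery is needed. With that observation your argument closes up and agrees with the paper's; the only remaining housekeeping is to verify $\chi(S)\le 0$ in each case before writing $\eta(S)=-\chi(S)/2$, which follows from the cutting formula together with the bounds on $d_1,d_2$, exactly as in the paper.
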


\begin{remark}
The first case in bullet~(2) 
says that for knots that are not $p$-trivial for any $p$, the quantity
$\|K\| +\frac12$ is additive under connected sum, 
and is the analog of the additivity of genus for knots in $S^3$; 
see Corollary~\ref{homology_corollary}. 
\end{remark}

\begin{remark}
Theorem~\ref{connect_sum_theorem} has an analog in the theory of stable commutator length (see
Definition~\ref{scl_definition});
compare with the Product formula (Theorem~2.93) from \cite{Calegari_scl}.
\end{remark}

\begin{remark}
Note that $K$ is 2-trivial if and only if $K$ is contained in an
$\RP^3$ summand as $\RP^1$.
Also, it follows from Theorem~\ref{connect_sum_theorem} that if $K= K_1\,\#\,K_2$ with
$K_1$ and $K_2$ non-trivial, then $\|K\| =0$ if and only if $K_1$ and
$K_2$ are 2-trivial, i.e. $K$ is contained in an $\RP^3\,\#\,\RP^3$
summand as $\RP^1\,\#\,\RP^1$.
This is a special case of Theorem~\ref{vanishing_theorem} (3), with $r=1$.
\end{remark}

\begin{proof}[Proof of Theorem~\ref{connect_sum_theorem}] 
Let $S_i \subset M_i$ be a good $p_i$-Seifert surface for $K_i$ with 
$\|K_i\| = \eta (S_i)/p_i$, $i=1,2$. 
Then we may construct a $p_1p_2$-Seifert surface $S$ for $K= K_1\,\#\,K_2$ 
in $M_1\,\#\,M_2$ by taking $p_2$ copies of $S_1$ and $p_1$ copies of $S_2$ 
and joining them along $p_1p_2$ arcs. 
We have 
\begin{equation}\label{eq1}
\chi (S) = p_2 \chi (S_1) + p_1 \chi (S_2) - p_1 p_2
\end{equation}
Conversely, let $S$ be a good $p$-Seifert surface for $K=K_1\,\#\,K_2$ with 
$\|K\| = \eta (S) /p$. 
Suppose $\partial S$ has $q$ components, each having intersection number $r$ 
with a meridian of $K$. 
Let $A$ be the annulus in $X = M_1\,\#\, M_2 - \inte N(K)$ that 
realizes the connected sum decomposition. 
By an isotopy of $S$ we may assume that each component of $\partial S$ 
meets each component of $\partial A$ in $r$ points, and that $S\cap A$ 
is a disjoint union of arcs and simple closed curves. 
Since the boundary components of $S$ are oriented coherently on 
$\partial N(K)$, each arc must have one endpoint on each component of 
$\partial A$. 
It follows that any simple closed curve of intersection is inessential in $A$, 
and therefore in $S$, and so these can be removed by performing surgery 
on $S$ and discarding the resulting 2-spheres. 
Hence we may assume that $S\cap A$ consists of $qr=p$ essential arcs in $A$. 
Cutting $S$ along these arcs gives $p$-Seifert surfaces $S_1,S_2$ for 
$K_1,K_2$ in $M_1,M_2$. 
Note that 
\begin{equation}\label{eq2}
\chi (S) = \chi (S_1) + \chi (S_2) - p
\end{equation}

To prove part (1), note that if $S_1$ and $S_2$ are disks and $p_2=1$
then the $p_1$-Seifert surface for $K_1\,\#\, K_2$ in \eqref{eq1} is a disk.

To prove (2), 
first suppose that  $K_1$ and $K_2$ are not $p$-trivial (for any $p$). 
Then the surfaces $S_1$ and $S_2$ in \eqref{eq1} and \eqref{eq2}
have no disk components, and from \eqref{eq1}
we get 
$$\eta (S) = p_2 \eta (S_1) + p_1\eta (S_2) + p_1 p_2/2\ ,$$
and hence 
\begin{equation}\label{eq3}
\begin{split}
\|K\| \le \eta (S) /p_1 p_2 
&= \eta (S_1)/p_1 + \eta (S_2)/p_2 + \frac12 \\
&= \|K_1\| + \|K_2\| + \frac12\ .
\end{split}
\end{equation}
Similarly, \eqref{eq2} gives 
$$\eta (S) = \eta (S_1) + \eta (S_2) + p/2\ ,$$
and hence 
\begin{equation}\label{eq4} 
\begin{split}
\|K\| = \eta (S)/p 
&= \eta (S_1)/p + \eta (S_2)/p + \frac12\\
&\ge \|K_1 \| + \|K_2\| + \frac12\ .
\end{split}
\end{equation}
Together, \eqref{eq3} and \eqref{eq4} 
give the first assertion in  part~(2) of the theorem. 

Second, suppose $K_2$ is $p_2$-trivial and $K_1$ is not $p$-trivial 
for any $p$.
Then in \eqref{eq1}  $S_2$ is a disk while $\chi (S_1)\le 0$.
Hence $\chi (S)\le 0$, and we get 
\begin{equation}\label{eq5} 
\begin{split}
\|K\| \le \eta (S) /p_1 p_2
& = \eta (S_1)/p_1 - \frac1{2p_2} +\frac12\\
& = \|K_1 \| - \frac1{2p_2} + \frac12\ .
\end{split}
\end{equation}
In \eqref{eq2}, since $K_1$ is not $p$-trivial for any $p$, no 
component of $S_1$ is a disk.
Hence $\eta (S_1) = - \chi (S_1)/2$, $\chi(S_1)\le 0$, and $\chi (S) \le0$.
Let $S_2$ have $d_2$ disk components. 
Then $\chi (S_2) \le d_2$ and 
\begin{equation}\label{eq6}
d_2 p_2 \le p\ .
\end{equation}

Now 
\begin{equation}\label{eq7}
\begin{split} 
\|K\| = \eta (S)/p & = - \chi(S)/2p\\
&\ge -\chi (S_1)/2p - \frac{d_2}{2p} + \frac12\\
&\ge \|K_1\| - \frac{d_2}{2p} + \frac12\ .
\end{split}
\end{equation}
Comparing \eqref{eq7} with \eqref{eq5} we get 
$$\frac{d_2}{2p} \ge \frac1{2p_2}\ ,\qquad \text{i.e. }\ d_2p_2\ge p\ .$$
By \eqref{eq6}, this gives $d_2p_2 =p$ and \eqref{eq5} is an equality. 

Finally, suppose that $K_i$ is $p_i$-trivial, $p_i\ge 2$, $i=1,2$. 
In \eqref{eq1}, $S_1$ and $S_2$ are disks, and 
$$\chi (S) = p_1 + p_2 - p_1p_2 \le 0\ .$$
Hence 
\begin{equation}\label{eq8}
\begin{split}
\| K\| \le \eta (S)/p_1p_2 
& = \frac{p_1p_2 - (p_1 +p_2)}{2p_1p_2}\\
\noalign{\vskip6pt}
& = \frac12 - \frac{(p_1+p_2)}{2p_1p_2}
\end{split}
\end{equation}
Now consider \eqref{eq2}, and let $S_i$ have $d_i$ disk components, $i=1,2$.
Note that 
\begin{equation}\label{eq9}
d_i p_i\le p\ ,\qquad i=1,2\ ;
\end{equation}
in particular, since $p_i \ge2$, $d_i \le p/2$, $i=1,2$. 
It follows that 
$$\chi (S) \le d_1 + d_2 -p \le 0\ .$$
Therefore
\begin{equation}\label{eq10}
\begin{split}
\|K\| =\eta (S)/p 
& = -\chi (S) /2p\\
&\ge \frac12 - \frac{(d_1 + d_2)}{2p}\ .
\end{split}
\end{equation}

Comparing \eqref{eq10} with \eqref{eq8} gives 
$$\frac{d_1}p + \frac{d_2}p \ge \frac1{p_1} + \frac1{p_2}\ .$$
On the other hand, by \eqref{eq9} we have $d_i/p \le 1/p_i$, 
$i=1,2$, and hence \eqref{eq8} is an equality, as desired. 
\end{proof}

\subsection{Examples of knots with small rational genus}\label{example_subsection}

We use Lemma~\ref{surgery_lemma} to construct examples of knots $K$ 
in $3$-manifolds $M$ with arbitrarily small (but non-zero) rational genus.
These examples illustrate cases from our main classification theorems, to be
proved in \S~\ref{hyperbolic_knots_section} and \S~\ref{general_knots_section}, especially
case~(2) of Theorem~\ref{hyperbolic_complement_estimate}, 
case~(2) of Theorem~\ref{lens_theorem},
case~(3) of Corollary~\ref{hyperbolic_corollary}, case~(3) of Theorem~\ref{small_sfs_theorem}, and
cases~(3) and (4) of Theorem~\ref{toroidal_theorem}. We recall the notation $\Delta(\cdot,\cdot)$ for the minimal
number of points of intersection (i.e.\ the homological intersection number) 
of two unoriented isotopy classes of simple essential loops on a torus; 
in co-ordinates, $\Delta(a/b,c/d) = |ad-bc|$.

\smallskip

{\noindent \bf Case A.} ($M$ is hyperbolic and $K$ is the core of a Margulis tube)
Let $K'$ be a hyperbolic knot in $S^3$. 
Let $M$ be the result of $m/n$-Dehn surgery on $K'$, $m>0$, and let $K$ 
be the core of the surgery solid torus. 
By Lemma~\ref{surgery_lemma}, $\|K\| = \|K'\|/m\to0$ as $m\to\infty$. 
Also, for $m$ sufficiently large $M$ is hyperbolic and $K$ is a geodesic 
in $M$ whose length $\to 0$ as $m\to\infty$.
\smallskip

{\noindent \bf Case B.} ($M$ is a lens space and $K$ lies on a genus 1 Heegaard 
surface for $M$) Let $(M,K)$ be as in Case~A above, but with $K'$ a $(u,v)$-torus knot 
in $S^3$, where $u,v>1$. 
If $d= \Delta (m/n,\, uv/1) = |m-nuv| =1$ then $M$ is a lens space, 
and one sees that $K$ lies on a Heegaard torus in $M$. 
By choosing $m$ large enough we can make $\|K\| =\|K'\|/m$ arbitrarily small.
\smallskip

{\noindent \bf Case C.} ($M$ is a Seifert fiber space and $K$ is an ordinary or 
exceptional fiber) By taking $d>1$ in Case~B above, $M$ becomes a Seifert fiber space with base 
orbifold $S^2$ with three cone points of orders $u,v$ and $d$, and $K$ 
is the exceptional fiber of multiplicity $d$.

More generally, let $M'$ be a Seifert fibered homology 3-sphere, with 
base $S^2$ and $k\ge3$ exceptional fibers. 
Let $K'$ be an ordinary fiber, and let $a/b$ be the slope of the fiber 
on $\partial X'$, where $X' = M' - \inte N(K')$. 
Let $M$ be $m/n$-Dehn surgery on $K'$, and $K$ the core of the surgery 
solid torus.
Let $d = \Delta (a/b,\, m/n) = |an-bm|$. 
If $d=1$, then $M$ is a Seifert fiber space over $S^2$ with $k$ exceptional 
fibers, and $K$ is an ordinary fiber. 
If $d>1$, then $M$ is a Seifert fiber space over $S^2$ with $(k+1)$ exceptional 
fibers, and $K$ is an exceptional fiber of multiplicity $d$.
By Lemma~\ref{surgery_lemma}, $\|K\| = \|K'\|/m$, and in both cases this can be 
made arbitrarily small by taking $m$ sufficiently large. 
\smallskip

{\noindent \bf Case D.} ($M$ is hyperbolic and $K$ is a non-trivial cable of the 
core of a Margulis tube) Let $K_0$ be a hyperbolic knot in $S^3$. 
Fix coprime integers $p,q>1$, and let $k$ be any positive integer. 
Then $kq^2$ and $(1+kpq)$ are coprime, so there exist integers $a,b$ 
such that $akq^2 - b(1+kpq)=1$. 
Let $M$ be the manifold obtained by $(-kq^2/b)$-Dehn surgery on $K_0$, 
and let $K'\subset M$ be the core of the surgery. 
Let $X_0$ be the exterior of $K_0$; then $M= X_0\cup V$ where $V$ is a 
tubular neighborhood of $K'$. 
Let $\mu_0,\lambda_0$ (resp.\ $\mu,\lambda$) be a canonical 
meridian-longitude pair of generators for $H_1 (\partial X_0)$ 
(resp.\ $H_1 (\partial V)$). 
Let $f:\partial V \to \partial X_0$ be the gluing homeomorphism. 
We can choose $f$ so that with respect to the above bases 
$f_* : H_1 (\partial V) \to H_1 (\partial X_0)$ is given by the matrix
$$A = \begin{pmatrix} kq^2 & - (1+kpq)\\ \noalign{\vskip6pt}
-b&a\end{pmatrix}$$
Then 
$$A^{-1}  = \begin{pmatrix} a & 1+kpq\\ \noalign{\vskip6pt}
b&kq^2\end{pmatrix}$$
in particular, under the gluing the longitude $\lambda_0$ on 
$\partial X_0$ is identified with a curve of slope 
$(1+kpq)/kq^2$ on $\partial V$.

Let $K\subset \inte V$ be the $(p,q)$-cable of $K'$, the core of $V$.
Then $X = M -\inte N(K) = X_0\cup (V-\inte N(K)) = X_0\cup C$, 
where $C$ is a {\em $(p,q)$-cable space\/} (see \cite{Gordon_Litherland}, \S~3). 
There exists a planar surface $P\subset C$ with one boundary component on 
$\partial V$, with slope $(1+kpq)/kq^2$, and $q$ boundary components on 
$\partial N(K)$, with slope $(1+kpq)/k$ (see \cite[Lemma~3.1]{Gordon_Litherland}). 
Let $S_0$ be a Seifert surface for $K_0$, and define $S = P\cup S_0$, 
glued along $\partial S_0 = P\cap \partial V$. 
Since each of the $q$ components of $S\cap \partial N(K)$ has intersection 
number $k$ with the meridian of $K$, it follows that $S$ is a $qk$-Seifert surface for $K$.
Therefore $\|K\| \le \eta(S)/qk = (\eta (S_0) + (q-1)/2)/qk$, which goes to $0$
as $k$ goes to infinity. Also, for $k$ sufficiently large, $M$ is hyperbolic and $K'$ is the core of 
a Margulis tube in $M$.
\smallskip

{\noindent \bf Case E.} ($M$ is a Seifert fiber space and $K$ is a cable of an 
exceptional fiber but not a fiber) Repeat the construction in Case D above, 
but with $K_0$ a $(u,v)$-torus knot. 
Then $M$ is a Seifert fiber space and $K'$ is a fiber of multiplicity
$\Delta (-kq^2/b,\, uv/1) = |kq^2 + buv|$, which we can arrange to be $>1$.
Then $K$ is the $(p,q)$-cable of an exceptional fiber. 
On the other hand it is easy to see that $K$ can be a fiber in the Seifert fibration of $M$ for
at most one value of $k$.
\smallskip

{\noindent \bf Case F.} ($M$ is toroidal and $K$ lies in a torus in the JSJ decomposition of $M$) 
Let $K_1,K_2$ be non-trivial knots in $S^3$, and let $K'= K_1\,\#\, K_2$.
Let $M$ be the manifold obtained by $m$-surgery on $K'$ for some $m>0$, 
and let $K$ be the core of the surgery  solid torus. 
By Lemma~\ref{surgery_lemma}  $\|K\| = \|K'\|/m$; thus $\|K\|$ is non-zero but can be made 
arbitrarily small by taking $m$ sufficiently large. 

Let $X_i = S^3 -\text{int }N(K_i)$ be the exterior of $K_i$, $i=1,2$; 
then $X = S^3 - \text{int }N(K') = X_1\cup_A X_2$ where $A$ is a meridional 
annulus in $\partial X_i$, $i=1,2$. 
Let $V$ be the surgery solid torus.
Note that the boundary slope of $A$ on $\partial X$ (the meridian of $K'$) 
intersects the meridian of $V$ once. 
It follows that $X_1\cup V \cong X_1$, and so $M = (X_1\cup X_2) \cup V 
\cong X_1 \cup_T X_2$, where $T = \partial X_1 = \partial X_2$. 
Also $K$, the core of $V$, is isotopic into $T$.

If, for example, we take $K_1$ to be hyperbolic and $K_2$ to be either 
hyperbolic or a torus knot, then $T$ is the unique torus in the JSJ 
decomposition of $M$. Note also that in the second case $K$ is not a Seifert fiber in $X_2$.
If we take $K_i$ to be the $(p_i,q_i)$ torus knot, $i=1,2$, and 
$m\ne p_1q_1 + p_2q_2$, then again $T$ is the unique JSJ torus in $M$, and $K$ is not a Seifert fiber of either $X_1$ or $X_2$.

\medskip

Further examples are given in the next subsection.

\subsection{Torus bundles}\label{torus_bundles_subsection}

In this section we analyze the case where $M$ is a $T^2$-bundle over $S^1$ 
and $K$ is an essential simple closed curve in a fiber. 
It turns out that this gives further examples of knots with arbitrarily 
small rational genus, which need to be taken account of in the statement 
of Theorem~\ref{toroidal_theorem}.

Let $f:T^2 \to T^2$ be an orientation preserving diffeomorphism, and let 
$M_f$ be the mapping torus of $f$, obtained from $T^2\times I$ by 
identifying $(x,0)$ with $(f(x),1)$ for all $x\in T^2$. 
If we choose a basis for  $H_1(T^2)$, the automorphism $f_*$ of $H_1(T^2)$ 
induced by $f$ is represented by a matrix $A_f\in \SL(2,\Z)$; 
the diffeomorphism type of $M_f$ depends only on the conjugacy class 
of $A_f$ in $\GL(2,\Z)$. 
Let $K$ in $M_f$ be an essential simple closed curve in a fiber. 
If $\text{trace }f_*\ne2$ then $\det (A_f-I) \ne0$, and so every such $K$ 
has finite order in $H_1(M_f)$.
If $\text{trace }f_* =2$ then $A_f$ is conjugate in $\GL(2,\Z)$ to 
$\left(\begin{smallmatrix} 1&p\\ 0&1\end{smallmatrix}\right)$ 
for some $p\ge 0$.
If $p=0$ then $M_f = T^3$ and no $K$ has finite order in $H_1(M_f)$, 
so in the sequel we shall always assume that $f_* \ne \Id$. 
If $p\ge 1$ then there is a unique $K$ with finite order $(=p)$ in $H_1(M_f)$.

Let $a$ and $b$ be oriented simple closed curves in $T^2$ meeting 
transversely in a single point, such that $K$ is the image in $M_f$ of the 
curve $a\times\{1/2\}\subset T^2\times I$.
Thinking of $T^2\times I$ as $(a\times b) \times I = a\times (b\times I)$ 
shows that $(T^2\times I,K) = (S^1\times A^2,\, S^1\times\{\text{point}\})$,
and therefore $T^2\times I - \inte N(K) \cong S^1\times P^2$,
where $P^2$ is a pair of pants. 
Let the boundary components of $P^2$ be $B_0$, $B_1$ and $C$, where 
$S^1\times B_i = T_i = T^2\times \{i\}$, $i=0,1$, and 
$S^1\times C = \partial N(K)$. 
Let $b_0,b_1,c$ be the homology classes of $B_0$, $B_1$ and $C$, respectively,
oriented so that $[\partial P^2] = b_0 - b_1 +c$, and so that $b_0$ and $b_1$ 
map to the class $b$ above in $H_1 (T^2\times I)$ 
(we will abuse notation by not distinguishing between $a,b$ and their 
classes in $H_1 (T^2)$).

We now wish to describe certain horizontal surfaces in $S^1\times P^2$.
Consider the homomorphism 
$H_1(P^2)\to\Z$ defined by $b_0\mapsto \ell_0$, $b_1\mapsto \ell_1$ 
(so $c\mapsto \ell_1-\ell_0$), where $\ell_0$ and $\ell_1$ are arbitrary integers. 
Composing with the Hurewicz map we get a homomorphism $\pi_1(P^2)\to\Z$,
which is induced by a map $\pi :P^2 \to S^1$. 
Let $m$ be an integer $\ge1$ and let $\sigma :S^1\to S^1$ be the 
connected covering of degree~$m$. 
Let $F\to P^2$ be the $\Z/m\Z$ covering corresponding to the 
composition of $\pi_*$ with the quotient map $\Z \to \Z/m\Z$. 
This covering is the pull-back of $\sigma$ under the map $\pi$, in 
other words we have a commutative diagram
$$ \begin{CD}
F @>p_1>> S^1 \\
@Vp_2VV @VV\sigma V \\
P^2 @>>\pi> S^1
\end{CD}$$
where $F$ is identified with the subset 
$\{(x,y) : \sigma (x) =\pi (y)\}$ of $S^1\times P^2$, and $p_1$ and $p_2$ 
are the restrictions of the projections of $S^1\times P^2$ onto the factors. 

Consider the ordered bases $(a,b_0)$, $(a,b_1)$ and $(a,c)$ for 
$H_1 (T_0)$, $H_1(T_1)$ and $H_1 (\partial N(K))$, respectively. 
Let $\tilde B_0$, $\tilde B_1$, $\tilde C$ be the respective inverse 
images in $\partial F$ of $B_0$, $B_1$ and $C$. 
Since the coverings $\tilde B_i \to B_i$, $i=0,1$, and $\tilde C\to C$ 
have degree~$m$, we have
\begin{equation}\label{eq1.0}
\begin{array}{rcl}
[\tilde B_i] & = & \ell_i a + m b_i\ ,\qquad i=0,1\ ,\cr
\noalign{\vskip6pt}
[\tilde C] & = & \ell a + mc\ ,\ \text{ where }\ \ell = \ell_1 - \ell_0\ .
\end{array} 
\end{equation}
In particular, if $\ell\ne 0$ then $F$ is a relative $|\ell|$-Seifert surface
for $K$ in $T^2\times I$. 

Let $f_*$ be represented with respect to the basis $(a,b)$ by 
$\left(\begin{smallmatrix}\alpha &\beta \\ \gamma&\delta\end{smallmatrix}
\right) \in \SL(2,\Z)$. 
Assume for the moment that we are not in the case $\gamma=0$, 
$\alpha=\delta = -1$. 
Then we can choose  $\ell_0$ and $m\ge1$ such that 
\begin{equation}\label{eq1.1}
\gamma \ell_0 + (\delta -1) m=0\ ,
\end{equation}
and define 
\begin{equation}\label{eq1.2}
\ell_1 = \alpha \ell_0 + \beta m\ ; 
\end{equation}
so 
\begin{equation}\label{eq1.3}
\ell = (\alpha -1) \ell_0 +\beta m\ .
\end{equation}
Then 
$\left(\begin{smallmatrix}\alpha&\beta\\ \gamma&\delta\end{smallmatrix}\right)
\left(\begin{smallmatrix}\ell_0\\ m\end{smallmatrix}\right) = 
\left(\begin{smallmatrix}\ell_1\\ m\end{smallmatrix}\right)$, 
which implies that we may isotope $f$ so that $f(\tilde B_0) =\tilde B_1$, 
and hence $F$ becomes an orientable surface $S$ in $M_f$. 
If $\text{trace }f_* \ne2$ then \eqref{eq1.1} and \eqref{eq1.3}
imply that $\ell \ne0$, and so $S$ is an $|\ell|$-Seifert surface for $K$.
Since $\chi (S) = \chi(F) = m\chi (P^2) = -m$, we get 
\begin{equation}\label{eq1.4}
\| K\| \le m/2|\ell|\ .
\end{equation}
We note that if $\text{trace }f_* \ne2$ and $\gamma\ne0$, it follows 
easily from \eqref{eq1.1} and \eqref{eq1.3} that
\begin{equation}\label{eq1.5}
m/\ell = \gamma / (\text{trace }f_*-2)\ .
\end{equation}

In the case $\text{trace }f_* =2$, $A_f$ is conjugate to 
$\left(\begin{smallmatrix} 1&p\\ 0&1\end{smallmatrix}\right)$, $p\ge 1$, 
where the first element of the corresponding ordered basis for $H_1 (T^2)$ 
is represented by the unique $K$ that has finite order in $H_1(M_f)$. 
Thus $\left(\begin{smallmatrix}\alpha &\beta\\ \gamma&\delta\end{smallmatrix}
\right) = \left(\begin{smallmatrix} 1&p\\ 0&1\end{smallmatrix}\right)$, 
and in \eqref{eq1.1} we choose $\ell_0 =0$, $m=1$, giving $\ell=\ell_1 =p$.

The above discussion shows that unless $\gamma=0$ and $\text{trace }f_*=-2$,
any $K$ in a fiber of $M_f$ having finite order in $H_1(M_f)$ has 
an $|\ell|$-Seifert surface, $\ell\ne0$, such that the corresponding 
surface $F$ in $T^2 \times I- \inte N(K) = S^1\times P^2$ 
is horizontal (i.e.\/ transverse to the $S^1$ fibers), and \eqref{eq1.4} holds.

It remains to discuss the case $\gamma =0$, $\text{trace }f_* = -2$, i.e. 
where our matrix is 
$\left(\begin{smallmatrix} -1&\beta\\ 0&-1\end{smallmatrix}\right)$. 
Let $A_i$ be a vertical annulus in $S^1\times P^2$ with one boundary 
component on each of $T_i$ and $\partial N(K)$, $i=0,1$. 
Since $f_* (a) = -a$, $A_0$ and $A_1$ glue up to give an annulus $A$ 
in $M_f - \inte N(K)$ whose boundary components are coherently 
oriented on $\partial N(K)$. 
Hence $\|K\| =0$.

Finally, we show that the inequality \eqref{eq1.4} is an equality.
Let $S$ be a good $p$-Seifert surface for $K$ in $M_f$ such that 
$\|K\| = \eta (S)/p$.
Let $T\subset M_f$ be the fiber that is the image of $T^2\times\partial I$; 
note that $K\cap T = \emptyset$. 
Isotoping $S$ to minimize the number of components of $S\cap T$ we get 
a relative $p$-Seifert surface $F$ for $K$ in $T^2\times I$ that is 
essential in $T^2\times I-\inte N(K) = S^1\times P^2$. 
Therefore $F$ is either horizontal or vertical. 

First we dispose of the vertical case. 
Here $F$ must consist of either a single annulus with both boundary 
components on $\partial N(K)$, or two annuli, one running from 
$\partial N(K)$ to $T_0$ and the other from $\partial N(K)$ to $T_1$. 
In the first case, $[\partial S] = [\partial F] =0\in H_1(\partial N(K))$, 
a contradiction. 
In the second case, we also get $[\partial S] =0\in H_1(\partial N(K))$ 
unless $f_*(a)=-a$, in which case $A_0$ and $A_1$ glue up to give an 
annulus $A$ in $M_f -\inte N(K)$ that is a 2-Seifert surface for $K$, 
implying that $\|K\|=0$.
This is precisely the case $\gamma=0$, $\text{trace }f_*=-2$ 
discussed above. 

Now suppose $F$ is horizontal. 
Then the restriction to $F$ of $p_2 :S^1\times P^2 \to P^2$ is a covering 
projection, of degree $m\ge1$, say. 
Then, with the same notation as used earlier, we see that 
\eqref{eq1.0} must hold, and the subsequent discussion shows that \eqref{eq1.1} 
holds for some $\ell_0$, that $p=|\ell|$ where $\ell = \ell_1 - \ell_0$, 
and thence that $\|K\| = \eta (S) /|\ell | = m/2|\ell |$. 

The following theorem summarizes our conclusions.
\begin{theorem}\label{torus_bundle_theorem}
Let $M_f$ be a $T^2$-bundle over $S^1$ with monodromy $f$ not 
isotopic to the identity, and let $K$ be an essential simple closed curve 
in a fiber.
\begin{enumerate}
\item{If $\text{trace }f_* =2$ then there is a unique $K$ that has finite order 
in $H_1(M_f)$, and $\|K\| = 1/2p$ where $f_*$ is represented by the matrix 
$\left(\begin{smallmatrix}1&p\\ 0&1\end{smallmatrix}\right)$, $p\ge1$.}
\item{If $\text{trace }f_*\ne 2$ then every $K$ has finite order in $H_1(M_f)$, 
and $\|K\| = |\gamma/2(\text{trace }f_*-2)|$ where $f_*$ is represented 
by the matrix 
$\left(\begin{smallmatrix}\alpha&\beta\\ \gamma&\delta\end{smallmatrix}\right)$
with respect to an ordered basis of $H_1(T^2)$ whose first member is $[K]$.}
\end{enumerate}
\end{theorem}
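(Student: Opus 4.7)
The plan is to match the explicit bounds established in the preceding discussion with the formulas stated in the theorem, and to verify that these bounds are sharp. Since most of the labor is done, the proof decomposes into three tasks: producing the upper bound in each case, producing a matching lower bound via the horizontal/vertical dichotomy in the Seifert fibered complement, and verifying the uniqueness claim in case~(1).

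For the upper bound, the construction before the theorem builds a horizontal surface $S$ in $M_f$ from a degree-$m$ cover $F\to P^2$ whenever $(\ell_0,m)$ solves \eqref{eq1.1} and $\ell_1$ is defined by \eqref{eq1.2}, and inequality \eqref{eq1.4} gives $\|K\|\le m/2|\ell|$. When $\gamma\ne 0$, equation \eqref{eq1.5} identifies $m/\ell$ with $\gamma/(\text{trace }f_*-2)$, yielding the formula in case~(2). When $\gamma=0$ and $\text{trace }f_*\ne 2$, the relation $\alpha\delta=1$ over $\Z$ forces $\alpha=\delta=-1$; here the vertical-annulus construction already gave $\|K\|=0$, matching $|\gamma/2(\text{trace }f_*-2)|=0$. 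For case~(1), since $f_*\ne\Id$ the matrix $A_f$ is conjugate in $\GL(2,\Z)$ to $\left(\begin{smallmatrix}1&p\\0&1\end{smallmatrix}\right)$ with $p\ge1$; the image of $f_*-\Id$ is a rank-$1$ sublattice spanned by the first basis vector, so the unique isotopy class of essential simple closed curve on the fiber with finite order in $H_1(M_f)$ is $[K]$, and the choice $\ell_0=0$, $m=1$ gives $\ell=p$, so \eqref{eq1.4} reads $\|K\|\le 1/2p$.

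For the lower bound, invoke Proposition~\ref{rational_genus_is_achieved} to fix a good $p$-Seifert surface $S$ realizing $\|K\|=\eta(S)/p$. Isotope $S$ so as to minimize $|S\cap T|$, where $T$ is the image of $T^2\times\partial I$; then $F:=S\cap(T^2\times I-\inte N(K))$ is properly embedded and essential in the Seifert fibered space $S^1\times P^2$, hence isotopic to either a vertical or horizontal surface. In the vertical case, $F$ is a disjoint union of annuli, and the boundary class computation shows $[\partial S]=0$ in $H_1(\partial N(K))$ unless $f_*(a)=-a$, i.e.\ $\gamma=0$ and $\text{trace }f_*=-2$; in that exceptional case the vertical annulus is a $2$-Seifert surface giving $\|K\|=0$, consistent with the formula. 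In the horizontal case, $p_2|_F:F\to P^2$ is a covering of some degree $m$, so \eqref{eq1.0} applies; the gluing condition $f(\til B_0)=\til B_1$ reproduces \eqref{eq1.1} and therefore \eqref{eq1.3}, giving $p=|\ell|$ and $\chi(S)=\chi(F)=-m$, hence $\|K\|=m/2|\ell|$, which matches the upper bound via \eqref{eq1.5}.

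The main obstacle is the structural step reducing an arbitrary norm-minimizing $S$ to the explicit horizontal/vertical picture: one must justify that after surgering away inessential intersections of $S$ with $T$, the resulting $F$ is essential in $S^1\times P^2$ (so the horizontal-or-vertical dichotomy applies), and that in the horizontal case the slopes of $\partial F$ on $T_0$ and $T_1$ must match up under $f_*$ exactly as in \eqref{eq1.1}. Once this is in place, the remaining work is linear algebra in $\SL(2,\Z)$ together with the Euler characteristic computation $\chi(F)=m\chi(P^2)=-m$ for the $m$-fold cyclic cover, plus the observation that in case~(1) the image of $f_*-\Id$ determines the unique finite-order class in $H_1(M_f)$.
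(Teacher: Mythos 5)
Your proposal is correct and follows essentially the same route as the paper, whose "proof" is precisely the discussion in the torus-bundle subsection preceding the theorem: the horizontal covering construction giving the upper bound $m/2|\ell|$ via \eqref{eq1.4}--\eqref{eq1.5}, the vertical-annulus analysis of the $\gamma=0$, $\text{trace}\,f_*=-2$ case, and the lower bound via minimizing $|S\cap T|$ and invoking the horizontal/vertical dichotomy for essential surfaces in $S^1\times P^2$. The step you flag as the main obstacle is handled in the paper exactly as you outline it, so nothing further is needed.
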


We see immediately from Theorem~\ref{torus_bundle_theorem}
that knots in fibers of torus bundles provide additional examples of knots 
with arbitrarily small rational genus. These examples are relevant to Theorem~\ref{toroidal_theorem},
cases (\ref{Seifert_case}) and (\ref{torus_case}). 
We now describe this in more detail, continuing the list in \S~\ref{example_subsection}.

\smallskip

{\noindent \bf Case G.} ($M$ is a Seifert fiber space with no exceptional fibers and $K$ is a fiber)
Let $M$ be $M_f$ where $\text{trace }f_* =2$, as in part~(1) of 
Theorem~\ref{torus_bundle_theorem}.
Then $M$ can also be described as an $S^1$-bundle over $T^2$ with Euler 
number $p$, and $K$ is a fiber. We remark that $M$ has a Nil geometric structure
(see \cite{Scott} pp. 467--470). 
By taking $p$ large we can make $\|K\| = 1/2p$ arbitrarily small. 
\smallskip

{\noindent \bf Case H.} ($M$ is a $T^2$-bundle over $S^1$ with Anosov monodromy and $K$ lies in a 
fiber) 
Let $M$ be $M_f$ with $|\text{trace }f_*| >2$, so the monodromy $f$ is Anosov.
Note that $M$ has a Sol geometric structure (see \cite{Scott} pp. 470--472).
It is clear  from the formula in part~(2) of Theorem~\ref{torus_bundle_theorem}
that we can choose $M$ and $K$ so that $\|K\|$ is arbitrarily small. 
For example, choose any matrix 
$\left(\begin{smallmatrix}\alpha&\beta\\ \gamma&\delta\end{smallmatrix}
\right) \in \SL(2,\Z)$ with $\gamma \ne 0$ and let $f_n :T^2\to T^2$ 
be given by the matrix 
$\left(\begin{smallmatrix} 1&n\\ 0&1\end{smallmatrix}\right) 
\left(\begin{smallmatrix} \alpha&\beta\\ \gamma&\delta\end{smallmatrix}\right)
= \left(\begin{smallmatrix} \alpha+n\gamma &\beta+n\delta\\ 
\gamma &\delta \end{smallmatrix}\right)$ 
with respect to some basis $(a,b)$, say. 
Let $M_n$ be the corresponding $T^2$-bundle and let $K_n$ be the knot 
in a fiber such that $[K_n]=a$. 
Then $\|K_n\| = |\gamma/2(\alpha +\delta +n\gamma -2)| \to 0$ as 
$n\to\infty$.

\begin{remark}\label{torus_bundle_remark}
It follows from Theorem~\ref{torus_bundle_theorem}
that if we are not in Case~G or Case~H, i.e. if $\text{trace }f_*=-2$ 
or $|\text{trace }f_*| \le 1$, then either $\|K\| =0$ or $\|K\|\ge 1/8$.
\end{remark}

\subsection{Knots in vertical tori in Seifert fiber spaces}	
In this section we analyze relative $p$-Seifert surfaces for knots that 
lie in essential vertical tori in Seifert fiber spaces. 
First we have the following lemma.
\bigskip

\begin{lemma}\label{lem2.23}
Let $M$ be a Seifert fiber space with non-empty boundary and let 
$\pi :M\to \B$ be the projection of $M$ onto its base orbifold $\B$. 
Let $F$ be a horizontal surface in $M$ and let $k$ be the degree of the 
induced branched covering $\pi\mid F:F\to \B$. 
If $\chi (F) <0$ then $\chi (F) \le -k/6$.
\end{lemma}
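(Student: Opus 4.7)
The plan is to apply the Riemann--Hurwitz formula for branched coverings of $2$-orbifolds, and then show that a compact $2$-orbifold with non-empty boundary whose orbifold Euler characteristic is negative must in fact have orbifold Euler characteristic $\le -1/6$.

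First I would note that since $F$ is a horizontal surface in the Seifert fiber space $M$, the map $\pi \mid F : F \to \mathcal{B}$ is an orbifold covering of degree $k$ in the appropriate sense: its only branching occurs over the cone points of $\mathcal{B}$, and at a preimage of a cone point of order $n_i$ the local degree divides $n_i$. The standard Riemann--Hurwitz formula for such covers gives
\[
\chi(F) = k \cdot \chi^{orb}(\mathcal{B}),
\]
where $\chi^{orb}(\mathcal{B}) = \chi(|\mathcal{B}|) - \sum_i (1 - 1/n_i)$. Since $\chi(F) < 0$ by hypothesis, we have $\chi^{orb}(\mathcal{B}) < 0$, so it suffices to show $\chi^{orb}(\mathcal{B}) \le -1/6$.

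Second, I would exploit the hypothesis $\partial M \ne \emptyset$: boundary tori of $M$ project to boundary circles of $\mathcal{B}$, so $|\mathcal{B}|$ is a compact surface with non-empty boundary. I would then case-split on $\chi(|\mathcal{B}|)$. If $\chi(|\mathcal{B}|) \le 0$ (i.e.\ $|\mathcal{B}|$ is not a disk), then since $\chi^{orb} < 0$ forces at least one cone point when $\chi(|\mathcal{B}|) = 0$, and each cone point contributes at least $1/2$ to the sum, one gets $\chi^{orb}(\mathcal{B}) \le -1/2$. The delicate case is $|\mathcal{B}|$ a disk, where $\chi(|\mathcal{B}|) = 1$ and one needs $\sum_i (1-1/n_i) > 1$. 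With zero or one cone point this fails; with two cone points of orders $(n_1,n_2)$, negativity forces $(n_1,n_2) \ne (2,2)$, so the minimum value of $\sum (1-1/n_i) - 1$ is attained by $(2,3)$ giving $\chi^{orb} = 1 - 1/2 - 2/3 = -1/6$; with three or more cone points each contributing at least $1/2$, one immediately gets $\chi^{orb} \le -1/2$.

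Combining these, $\chi^{orb}(\mathcal{B}) \le -1/6$ in every case where it is negative, and the Riemann--Hurwitz identity yields $\chi(F) \le -k/6$. The only real obstacle is being careful about what ``horizontal'' entails (so that the Riemann--Hurwitz formula applies with no extra boundary terms and the branching only sits over cone points), and confirming that non-orientable possibilities for $|\mathcal{B}|$ (e.g.\ a M\"obius band) do not produce a smaller absolute value of $\chi^{orb}$ than the disk-with-two-cone-points case; a quick check shows they do not.
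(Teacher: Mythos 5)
Your proposal is correct and follows essentially the same route as the paper: both apply the formula $\chi(F) = k\bigl(\chi(\mathcal{B}) - \sum_i(1-1/q_i)\bigr)$ from Hatcher's notes and then observe that the negative values of the parenthesized quantity (for a base orbifold with boundary) are maximized at $-1/6$, attained by a disk with cone points of orders $2$ and $3$. You merely spell out the case analysis behind that optimization, which the paper asserts without detail.
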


\begin{proof} 
Let $q_1,\ldots,q_n$ be the multiplicities of the exceptional fibers of $M$.
Then (see for example \cite{Hatcher_notes}, \S~2.1)
\begin{equation*}
\chi (F) = k\bigg( \chi (\B) - \sum_{i=1}^n \Big(1-\frac1{q_i}\Big)\bigg)\ .
\end{equation*}
Since $\chi (F) <0$, the maximal value of the expression in parentheses is
attained when $\chi(\B)=1$, $n=2$, $q_1=2$, $q_2=3$, which gives the 
value $-1/6$.
\end{proof}

\begin{proposition}\label{prop2.24}  
Let $M$ be a Seifert fiber space, $T$ a vertical essential torus in $M$, 
$K$ an essential simple closed curve in $T$, and $F$ a relative  $p$-Seifert 
surface for $K$. 
Then either 
\begin{enumerate}
\item{$-\chi^- (F) \ge p/6$; or}
\item{$K$ is an ordinary fiber in the Seifert fibration of $M$; or}
\item{$M$ contains a submanifold $Q$ that is a twisted $S^1$-bundle 
over the M\"obius band and $K$ is a fiber in $Q$.}
\end{enumerate}
\end{proposition}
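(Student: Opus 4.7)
The plan is as follows. I assume we are not in case~(2), so $K$ is not isotopic on $T$ to a fiber of $M$, and aim to show that either we are in case~(3) or $-\chi^-(F) \geq p/6$. By Lemma~\ref{p_surface_is_good} I may take $F$ to be good, hence incompressible in $X = M - \inte N(K)$, and after an additional isotopy I may assume $F$ is $\partial$-incompressible.

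First I would fix a saturated (fibered) tubular neighborhood $N_s$ of $T$ in $M$ containing $N(K)$, and set $Y = M - \inte N_s$ (one or two Seifert fibered components, depending on whether $T$ separates) and $Z = N_s - \inte N(K) \cong S^1 \times P^2$, following the setup preceding Theorem~\ref{torus_bundle_theorem}. I would minimize $|F \cap \partial N_s|$ by isotopy and split $F$ into pieces $F_Y = F \cap Y$ and $F_Z = F \cap Z$, each sitting in a Seifert fibered manifold. Applying the standard horizontal/vertical decomposition theorem for essential surfaces in Seifert fibered spaces (cf.\ \cite{Hatcher_notes}, Chapter~2) to each piece, I may arrange each component of $F_Y$ and $F_Z$ to be horizontal (transverse to fibers) or vertical (a union of fibers), with boundary matching across $\partial N_s$.

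Each horizontal component of $F_Y$ or $F_Z$ is a branched cover of the relevant base orbifold of some degree $k$; by Lemma~\ref{lem2.23}, its Euler characteristic contributes at least $k/6$ to $-\chi^-(F)$. The key accounting step is to show that the sum of these degrees over all horizontal components is at least $p$. Vertical components meet $\partial N(K)$ only in curves of fiber slope on $\partial N(K)$, and since $K$ is not a fiber of $M$, this fiber slope is independent of the $[K]$-direction in $H_1(\partial N(K))$; consequently, vertical boundary contributions are tightly constrained, and under generic conditions they cannot carry the class $p[K]$ by themselves. The horizontal components must then absorb the entire $p[K]$ class, yielding $-\chi^-(F) \geq p/6$ and hence case~(1).

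The main obstacle is the exceptional configuration leading to case~(3). Vertical components in $Y$ and $Z$ can glue across $\partial N_s$ in one degenerate way that \emph{does} allow a vertical surface to carry nonzero $[K]$-class on $\partial N(K)$: two vertical annuli on opposite sides of $\partial N_s$ can glue, together with a vertical annulus in $Z$ running to $\partial N(K)$, to form a surface whose image in the base orbifold is a M\"obius band. The saturated neighborhood of this surface is then a twisted $S^1$-bundle $Q \subset M$ over a M\"obius band, with $K$ as a regular fiber --- exactly case~(3). Ruling out this single exception forces all vertical contributions to cancel in the $K$-direction. Cleanly extracting the M\"obius band configuration from the vertical gluing data, and verifying that no other vertical degeneration can carry $[K]$-class, is the most delicate part of the argument.
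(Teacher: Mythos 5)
Your overall strategy is the one the paper follows: split $M$ along a saturated neighborhood of $T$ into Seifert pieces plus $Z=S^1\times P^2$, make each piece of $F$ horizontal or vertical, feed the horizontal pieces into Lemma~\ref{lem2.23}, and isolate the M\"obius-band configuration as the source of conclusion~(3). But the ``key accounting step'' as you state it is not just unproven --- it is false as a deduction. Lemma~\ref{lem2.23} only gives $-\chi(F)\ge k/6$ when $\chi(F)<0$; a horizontal component can be a union of annuli (over a disk with two cone points of order $2$) of arbitrarily large degree and Euler characteristic zero, and these contribute degree but nothing to $-\chi^-(F)$. This configuration genuinely occurs and does not lead to conclusion~(3). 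The paper's Sublemma~\ref{sublem2.25} is built precisely to handle it: if an outer piece $F_i$ is a horizontal annulus then its covering degree $k_i$ is bounded \emph{above} by $m$, the degree of the middle horizontal piece $G\to P^2$, while the lower bound $k_0+k_1\ge p$ (which itself requires the homological computation $k_i=|\beta\ell_i-\alpha m|$ and a triangle inequality, not just the heuristic that ``the horizontal components absorb $p[K]$'') then forces $m\ge p/2$, and the estimate is rescued by $|\chi(G)|=m$ rather than by the outer horizontal pieces. Without the upper bound $k_i\le m$ your argument gives nothing in the case where both outer pieces are horizontal annuli.

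Two smaller omissions: you never treat the case where $T$ is non-separating, which the paper dispatches by a separate homological argument (showing $[K]$ has no component in the base direction, so $K$ is an ordinary fiber and conclusion~(2) holds); and your description of the vertical degeneration is not quite right --- in the paper's Case~I the M\"obius-band configuration arises when $F$ itself is a single annulus with both boundary components on $\partial N(K)$ (so Theorem~\ref{vanishing_theorem}~(3) applies), not from two vertical annuli on opposite sides of $\partial N_s$ gluing up; when $G$ is vertical and good, all of its annuli run to the \emph{same} side of the neighborhood of $T$.
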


Note that if (3) holds but (2) does not then the Seifert fibration      
of $Q$ induced from $M$ is the one with base orbifold a disk with two 
cone points of order~2.

\begin{proof}
Let $T^2\times I$ be a regular neighborhood of $T= T^2 \times \{1/2\}$. 
First suppose that $T$ separates $M$.
Then $M = X_0 \cup T^2 \times I \cup X_1$, where $T_i = T^2\times\{i\}$ 
is a component of $\partial X_i$, $i=0,1$. 
Let $N(K)$ be a regular neighborhood of $K$ in $T^2 \times I$ and let 
$Y = T^2 \times I - \inte N(K)$. 
Then $X = M-\inte N(K) = X_0 \cup Y\cup X_1$. 
Let $F$ be a good relative $p$-Seifert surface for $K$ in $M$. 
Let $F_i = F\cap X_i$, $i=0,1$, and $G = F\cap Y$. 
We may  assume that $F_0$, $F_1$ and $G$ are essential in $X_0$, $X_1$ and $Y$ 
respectively. 
The Seifert fibration of $M$ induces Seifert fiber structures on $X_0$ and 
$X_1$, with base orbifolds $\B_0$ and $\B_1$, say.
Recall from \S~\ref{torus_bundles_subsection} that $Y \cong S^1 \times P^2$ where $P^2$ is a 
pair of pants. 

Note that $F_i$ is horizontal or vertical in $X_i$, $i=0,1$, and $G$ is 
horizontal or vertical in $Y\cong S^1\times P^2$. 
Write 
$\partial_i F_i = \partial F_i \cap T_i = \partial G\cap T_i = \partial_i G$,
$i=0,1$.

\smallskip

{\noindent \bf Case I. $G$ is vertical}

Since a vertical annulus in $Y$ that has both its boundary components 
on $\partial N(K)$ has these boundary components oriented oppositely on 
$\partial N(K)$, and since $F$ is good, it follows that $G$ consists of 
$p$ parallel copies of an annulus with one boundary component on 
$\partial N(K)$ and the other on (say) $T_0$, $F_1=\emptyset$, and $F_0$ 
is connected.

\smallskip

{\noindent \bf Subcase (a).} $\chi (F_0)<0$

Then $F_0$ is horizontal in $X_0$. 
Since $\partial_0 F_0$ has $p$ components the index of the covering 
$F_0\to \B_0$ is at least $p$. 
Therefore, by Lemma~\ref{lem2.23}, 
$\chi (F) = \chi (F_0) \le -p/6$.

\smallskip

{\noindent \bf Subcase (b).} $\chi (F_0) =0$

Then $F_0$ is an annulus. 
First suppose that $\partial_0 F_0$ has a single component.
If $F_0$ is horizontal then $X_0 \cong F_0 \times S^1\cong T_0\times I$, 
contradicting the assumption that $T$ is essential in $M$.
If $F_0$ is vertical then 
$K$ is an ordinary fiber in the Seifert fibration of $M$.

If $\partial_0 F_0$ has two components then $F$ is an annulus with both 
boundary components on $\partial N(K)$ and by Theorem~\ref{vanishing_theorem}
$K$ is contained in a submanifold $N$ of $M$ where $N$ is a Seifert 
fiber space over the M\"obius band with at most one orbifold point 
of order $r\ge 1$ and $K$ is a fiber of multiplicity $r$.
If $r > 1$ then, since the Seifert fibration of $N$ is unique, $K$ is 
an exceptional fiber in $M$. 
But this contradicts the fact that $K$ is contained in a vertical torus. 
Hence $r = 1$ and we have conclusion (3). 

This completes the proof in Case I.

\smallskip

{\noindent \bf Case II. $G$ is horizontal}

Here we will adopt the notation of \S~\ref{torus_bundles_subsection}. 
Let $m$ be the index of the covering $G\to P^2$; so $\chi (G) = - m$. 
Also we have 
\begin{gather*}
[\partial_i F_i] = [\partial_i G] = \ell_i a + mb_i\ ,\qquad i=0,1\ ,\\
[\partial G\cap N(K)] = (\ell_1 - \ell_0) a+mc\ ,
\end{gather*}
where $p = |\ell_1 - \ell_0|$.

Let $\varphi_i$ be the Seifert fiber of $M$ on $T_i$, $i=0,1$. 
Then $[\varphi_i] = \alpha a +\beta b_i$, say, $i=0,1$. 
If $\beta =0$ then $[K] = [\varphi_i]$ so $K$ is an ordinary fiber in 
the Seifert fibration of $M$.
We will therefore assume that $\beta \ne 0$.

If $F_i$ is horizontal in $X_i$, let $k_i$ denote the index of the 
associated covering $F_i \to \B_i$, $i=0,1$. 
Then $k_i = |\partial_i F_i \cdot \varphi_i| = |(\ell_i a + mb_i)\cdot 
(\alpha a + \beta b_i)| = |\beta \ell_i - \alpha m|$, where $\cdot$
denotes algebraic intersection number.

\begin{sublemma}\label{sublem2.25} 
\begin{enumerate}
\item{If both $F_0$ and $F_1$ are horizontal then $k_0 + k_1 \ge p$.}
\item{If $F_i$ is horizontal and $\chi (F_i)=0$ then $k_i\le m$.}
\item{If $F_0$ is horizontal and $F_1$ is vertical then $k_0 \ge p$.}
\end{enumerate}
\end{sublemma}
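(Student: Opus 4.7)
The plan is to treat the three parts separately, using the formula $k_i = |\beta\ell_i - \alpha m|$ derived just before the sublemma, together with the standing hypothesis $\beta \neq 0$ (imposed when $K$ is not an ordinary fibre).

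For (1) I would apply the triangle inequality:
\[
k_0 + k_1 \;=\; |\beta\ell_0 - \alpha m| + |\beta\ell_1 - \alpha m| \;\geq\; |\beta(\ell_1 - \ell_0)| \;=\; |\beta|\,p \;\geq\; p,
\]
since $|\beta|\geq 1$. For (3), since $F_1$ is vertical it is a union of vertical annuli whose boundary on $T_1$ is a union of Seifert fibres, so $[\partial_1 F_1] = n[\varphi_1] = n\alpha\,a + n\beta\,b_1$ for some integer $n$. Comparing with $\ell_1 a + m b_1$ yields $\ell_1 = n\alpha$ and $m = n\beta$, whence
\[
k_0 \;=\; |\beta\ell_0 - \alpha m| \;=\; |\beta(\ell_0 - n\alpha)| \;=\; |\beta|\,|\ell_0 - \ell_1| \;=\; |\beta|\,p \;\geq\; p.
\]

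Part (2) is the substantive case. Since $\chi(F_i) = 0$ and each component of $F_i$ is orientable with nonempty boundary (meeting $T_i$), every component is an annulus. By the horizontal-surface Euler characteristic formula $\chi(F_i) = k_i\cdot\chi^{\mathrm{orb}}(\B_i)$ (as used in Lemma~\ref{lem2.23}), we get $\chi^{\mathrm{orb}}(\B_i)=0$. I would then classify the compact $2$-orbifolds with vanishing orbifold Euler characteristic and nonempty boundary: the only possibilities are the annulus, the M\"obius band, and the disc with two cone points of order $2$. The annulus case forces $X_i\cong T^2\times I$, making $T$ boundary-parallel in $M$ and contradicting essentiality. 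Both remaining possibilities realise $X_i$ as the same manifold $Q$, the twisted $S^1$-bundle over the M\"obius band (equivalently, the twisted $I$-bundle over the Klein bottle); in each $\B_i$ has a single boundary component, so both ends of each annular component of $F_i$ lie on $T_i$, giving $q_i = 2$ per component. Next, I would show that any connected, embedded, orientable horizontal surface in $Q$ is the orientation double cover of a M\"obius section of $Q\to\mathbb{M}$, and hence has degree $2$: the monodromy of this $S^1$-fibration around the core of the M\"obius band is an involution of the fibre, so the sheets of any embedded multi-valued section lie in orbits of size at most $2$, and the unique connected orientable choice is the annular double cover. Therefore each component of $F_i$ contributes $2$ to $k_i$, and each boundary circle covers $\partial\B_i$ with degree $1$; that is, each boundary circle is a primitive class $h = h_a a + h_b b_i$ with $h\cdot\varphi_i = h_a\beta - h_b\alpha = 1$. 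Because the components of $\partial_i F_i = \partial_i G$ are all parallel in $T_i$ (they are the boundary of a horizontal surface $G$ in $Y=S^1\times P^2$), they share a single primitive class $h$. If $F_i$ has $N$ annular components, then $[\partial_i F_i] = 2Nh$, so $\ell_i = 2Nh_a$, $m = 2Nh_b$, and $k_i = 2N$. Since $G$ is horizontal of positive degree $m$ (we are in Case~II), we need $h_b\geq 1$, and therefore $m = 2Nh_b \geq 2N = k_i$.

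The chief obstacle is the degree bound $k_i = 2$ per component in part (2): after identifying $X_i$ as the twisted $I$-bundle over the Klein bottle, one must use the order-$2$ monodromy of its Seifert fibration to exclude connected embedded orientable horizontal surfaces of higher degree. Parts (1) and (3) are essentially immediate from the formula $k_i = |\beta\ell_i - \alpha m|$ once the types (horizontal or vertical) of $F_0$ and $F_1$ are known.
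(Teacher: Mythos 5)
Your proof is correct and follows essentially the same route as the paper's: part (1) is the identical triangle-inequality computation; part (3) is the paper's computation $sk_0=mp$ repackaged by substituting $\ell_1=n\alpha$, $m=n\beta$ into $k_0=|\beta\ell_0-\alpha m|$; and part (2) rests on the same key facts, namely that the annulus base is excluded by essentiality of $T$, that each boundary circle of the horizontal annular components has intersection number $1$ with $\varphi_i$, and that $\partial_i F_i=\partial_i G$ has at most $m$ components. The one point to watch in part (2) is that your degree-two argument is phrased for the $S^1$-fibration of $Q$ over the M\"obius band, whereas $F_i$ is horizontal with respect to the fibration of $X_i$ induced from $M$, whose base may instead be the disk with two cone points of order $2$; the same conclusion holds there (a connected, non-separating, embedded horizontal annulus forces the cut-open manifold to be a product $I$-bundle and the degree over the base to be $2$), which is exactly what the paper asserts when it says each boundary component of $A$ meets the Seifert fiber once.
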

\begin{proof} 

(1) $k_0 +k_1  = |\beta \ell_0 - \alpha m| + |\beta \ell_1 -\alpha m|
\ge |\beta|\, |\ell_1 - \ell_0| = |\beta| p \ge p$. 

(2) 
Here $F_i$ consists of parallel copies of a horizontal annulus $A$ in $X_i$.
If $A$ has one boundary component on $T_i$ and one on $\partial M$ then 
$X_i \cong A\times S^1 \cong T_i\times  I$, contradicting the assumption 
that $T$ is essential in $M$. 
Also, since the components of $\partial_i G$ are coherently oriented on 
$T_i$ the same holds for $\partial_i F_i$. 
It follows that $A$ is non-separating and $\B_i$ is a disk with two 
cone points of order~2.
In particular each boundary component of $A$ has intersection number~1 with 
the Seifert fiber $\varphi_i$. 
Therefore $k_i= |\partial_i F_i \cdot \varphi_i|$ is the number of 
components of $\partial_i F_i = \partial_i G$, which is $\le m$ since $G$ 
is an $m$-fold covering of $P^2$.

(3) 
If $F_1$ is vertical then $[\partial_1 F_1] = \ell_1 a + mb_1 = s[\varphi_1] 
= s(\alpha a +\beta b_1)$ where $s$ is the number of components of 
$\partial_1 F_1 = \partial_1 G$. 
Hence $s\le m$. 
Now 
\begin{equation*} 
\begin{split} 
sk_0 & = |\partial_0 F_0 \cdot s\varphi_0|\\
& = (\ell_0 a + mb_0) \cdot (\ell_1 a + mb_0)\\
& = m|\ell_1 - \ell_0| = mp\ .
\end{split}
\end{equation*}
Therefore $k_0 = mp/s\ge p$.
\end{proof}

We now complete the proof of Proposition~\ref{prop2.24} in Case~II.

First note that $F_0$ and $F_1$ cannot both be vertical, for then we 
would have $\ell_0 =\ell_1$ and hence $p=0$.

\smallskip

{\noindent \bf Subcase (a).} $F_0$ and $F_1$ horizontal

If $\chi (F_0),\chi(F_1) < 0$ then 
\begin{equation*}
\begin{split} 
|\chi (F)| & = |\chi (F_0)| + |\chi (F_1)| + |\chi (G)|\\
& \ge (k_0 + k_1)/6 +m\ ,\ \text{ by Lemma~\ref{lem2.23}}\\
& \ge p/6 + m,\ \text{ by Sublemma~\ref{sublem2.25} (1)}\\
& > p/6\ .
\end{split}
\end{equation*}

If $\chi (F_0) =0$ and $\chi (F_1) < 0$ then $k_0 \le m$ by 
Sublemma~\ref{sublem2.25} (2), and hence $k_1 \ge p-m$ by 
Sublemma~\ref{sublem2.25} (1). 
Therefore 
\begin{equation*}
\begin{split} 
|\chi (F)| & = |\chi (F_1)| + |\chi (G)|\\
& \ge k_1/6 + m\\
& \ge (p-m)/6 + m > p/6\ .
\end{split}
\end{equation*}
Finally, if $\chi (F_0) = \chi (F_1) =0$, then by Sublemma~\ref{sublem2.25}
parts (1) and (2), we have $p \le k_0 + k_1 \le 2m$.
Hence 
\begin{equation*}
|\chi (F)| = |\chi (G)| = m \ge p/2\ .
\end{equation*}

\smallskip

{\noindent \bf Subcase (b).} $F_0$ horizontal, $F_1$ vertical

If $\chi (F_0) < 0$ then 
\begin{equation*}
\begin{split} 
|\chi (F)| & = |\chi (F_0)| + |\chi (G)|\\
& \ge k_0/6 + m\\
& \ge p/6 + m\ ,\ \text{ by Sublemma~\ref{sublem2.25} (3)}\\
& > p/6\ .
\end{split}
\end{equation*}

If $\chi (F_0) =0$ then by Sublemma~\ref{sublem2.25} (2) $k_0 \le m$, 
while by Sublemma~\ref{sublem2.25} (3) $k_0 \ge p$. 
Hence $|\chi (F)| = |\chi (G)| = m \ge p$.

This completes the proof of Proposition~\ref{prop2.24} when $T$ separates $M$. 

If $T$ is non-separating, let $\pi :M\to \B$ be the projection from $M$ to its 
base orbifold $\B$. Let $N$ be a regular neighborhood of either the union of 
the exceptional fibers of $M$ or, if $M$ is closed and has no exceptional 
fibers, an ordinary fiber. Let $M_0 = M - \inte N$, with corresponding base 
orbifold $\B_0$. Then $M_0$ is an $S^1$-bundle over $\B_0$ and $T = 
\pi^{-1}(C)$ for some non-separating orientation-preserving simple closed 
curve $C$ in $\B_0$. Now $H_1(T)$ has basis $\varphi, \gamma$, where $\varphi$ is the
class of the $S^1$-fiber of $M_0$ and $\pi_{\ast}(\gamma) = [C] \in H_1(\B_0)$.
Therefore $[K] = r\varphi + s\gamma$ for some pair of relatively prime integers
$r,s$. 

Isotoping $F$ to be transverse to the core of the components of $N$, let 
$F_0 = F\cap M_0$. Then $F_0$ defines a homology of $pK$ into 
$\partial M_0$. Therefore, considering the map $\pi_{\ast}:H_1(M_0, \partial 
M_0) \to H_1(\B_0, \partial\B_0)$, we have $0 = \pi_{\ast}(p[K]) = ps[C]$.
Since $C$ is orientation-preserving and non-separating, $[C]$ has infinite 
order in $H_1(\B_0, \partial\B_0)$, and so we conclude that $s = 0$. Therefore 
$K$ is an ordinary fiber in the Seifert fibration of $M$. 
\end{proof}

\section{Graphs}\label{graph_section}

Several of our arguments concern the interaction between a relative $p$-Seifert surface $F$ 
for a knot $K$ in $M$, and another surface $\widehat{G}$ properly embedded in $M$. Such arguments
are handled in a uniform manner, by making the surfaces intersect as simply as possible, and
then by analyzing cases depending on the combinatorics of this intersection.
The arguments in this section are mostly combinatorial.

\subsection{Graphs on surfaces}

Fix the following notation. Let $K$ denote a knot in $M$, let $F$ be a good relative $p$-Seifert surface for $K$,
and let $\widehat{G}$ be a properly embedded surface in $M$ (usually a Heegaard surface, or an
essential surface; usually of low complexity). Under such a circumstance, we perform the following procedure.

Isotop $N(K)$ so that it meets $\widehat{G}$ in $n$ meridian disks, and let $G = \widehat{G} \cap X$, so that $F$ and $G$
are both proper surfaces in $X$. After an isotopy, we may assume that $F$ and $G$ meet transversely in a finite
disjoint union of circles and properly embedded arcs and that each of the $q$ components of $\partial F \cap \partial N(K)$
meets each of the $n$ components of $\partial G \cap \partial N(K)$ in $r$ points, with notation as in 
Definition~\ref{good_definition}.

Formally cap off the components of $\partial F \cap \partial N(K)$ with disks to obtain
a surface $\widehat{F}$ (note: if $F$ is an honest $p$-Seifert surface, then $\widehat{F}$ is closed. Otherwise,
$\partial \widehat{F} = \partial F \cap \partial M$). The intersection $F \cap G$ determines graphs $\Gamma_F$ and
$\Gamma_G$ in $\widehat{F}$ and $\widehat{G}$ respectively, where the vertices of $\Gamma_F$ (resp. the vertices of 
$\Gamma_G$) correspond to the disks of $\widehat{F} - F$ (resp. the disks of $\widehat{G} - G$) and the edges correspond
to the arc components of $F \cap G$ with at least one endpoint on $\partial N(K)$. We distinguish between two kinds of
edges of $\Gamma_F$ and $\Gamma_G$: {\em interior edges}, which have both endpoints on $\partial N(K)$ (i.e.\ at the
vertices), and {\em boundary edges}, which have one endpoint on $\partial N(K)$, and the other on 
$\partial \widehat{F} = F \cap \partial M$ or $\partial \widehat{G} = G \cap \partial M$. 

Choose orientations on $F$, $G$ and $X$. This induces orientations on $\partial F$, $\partial G$ and $\partial X$, and
an arc of $F \cap G$ joins points of intersection of $\partial F$ with $\partial G$ of opposite sign.

Number the components of $\partial G \cap \partial N(K)$ (equivalently, the vertices of $\Gamma_G$) with the integers
$1,2,\cdots,n$ in the (cyclic) order they occur along $\partial N(K)$. Hereafter an {\em index} means an element
of this index set; i.e.\ an element $i \in \lbrace 1,\cdots,n\rbrace$.
We imagine that the vertices of $\Gamma_F$ and
$\Gamma_G$ are thickened, so that distinct edges end at distinct ``edge-endpoints'' on a (thickened) vertex. With this
convention, an edge-endpoint at a vertex of $\Gamma_F$ is a point of intersection of the corresponding component of
$\partial F \cap \partial N(K)$ with a component of $\partial G \cap \partial N(K)$, and we label the edge-endpoint
with the index corresponding to the label on the component of $\partial G \cap \partial N(K)$.
Notice that it is the {\em vertices} of $\Gamma_G$ and the {\em edge-endpoints} of
$\Gamma_F$ that are labeled with indices.
Since $F$ is good (by hypothesis), all components of $\partial F \cap \partial N(K)$ are coherently oriented on
$\partial N(K)$, and therefore at each vertex of $\Gamma_F$ we see the index labels $1,2,\cdots,n,1,2,\cdots,n,\cdots$ repeated
$r$ times in (say) anticlockwise order on the edge-endpoints around the vertex. Notice that if $\Gamma_F$ and
$\Gamma_G$ have $e_i$ interior edges and $e_\partial$ boundary edges, then $2e_i + e_\partial = pn$.

\medskip

In applications, the surface $\widehat{G}$ will always be either essential, or a Heegaard surface. In the former case we
will choose $n = |K \cap \widehat{G}|$ to be minimal; and in the latter case we will put $K$ in thin position.

\begin{remark}
Thin position for knots in $S^3$ was introduced by Gabai \cite{Gabai_3}, and for knots in arbitrary $3$-manifolds
by Rieck \cite{Rieck, Rieck_paper}, and we refer to these references for details. Technically, a knot is in thin
position with respect to a sweepout of a $3$-manifold (associated to a Heegaard splitting). The Heegaard surface $\widehat{G}$
is one of the nonsingular level sets of this sweepout, chosen depending on $F$.
\end{remark}

\begin{lemma}\label{no_monogons}
With notation and conventions as above, we can arrange that no arc component of $F \cap G$ with both endpoints in 
$\partial N(K)$ is boundary parallel in either $F$ or $G$. Equivalently, the graphs $\Gamma_F$ and $\Gamma_G$ 
have no monogon (disk) faces.
\end{lemma}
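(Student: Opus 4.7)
The plan is a standard innermost-monogon isotopy argument: each monogon can be eliminated by a small ambient isotopy of $F$ or $G$ that strictly decreases $|F\cap G|$ without touching $K$ or $N(K)$, so iterating terminates in finitely many steps.

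Suppose $\Gamma_G$ has a monogon face, and pick one that is innermost. This face cuts off a disk $D\subset G$ with interior disjoint from $F$, whose boundary decomposes as $\partial D=\alpha\cup\beta$, where $\alpha$ is a single arc of $F\cap G$ and $\beta$ is an arc in $G$ running parallel to a single meridian circle $\mu_i$ of $N(K)$ (i.e.\ along the boundary of the thickened vertex). Interpreting $D$ as a product neighborhood of $\alpha$ in $G$, I would perform a small ambient isotopy of $G$ in $X$, supported in a regular neighborhood of $D$, that pushes $\alpha$ transversely across $D$ and off $F$. This strictly decreases $|F\cap G|$ by exactly one arc and introduces no new intersections, because the innermost property guarantees that the support of the isotopy is disjoint from all other arcs of $F\cap G$. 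Crucially the isotopy can be chosen with support disjoint from $K$, $N(K)$, and $\widehat G$, so it preserves $n=|K\cap\widehat G|$ together with whatever minimality or thin-position hypothesis was imposed on $(K,\widehat G)$.

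Monogons in $\Gamma_F$ are handled symmetrically by an ambient isotopy of $F$ in $X$, which leaves $F$ a good $p$-Seifert surface with the same $\eta/p$, since being a good $p$-Seifert surface is invariant under ambient isotopy of $X$ fixing $\partial N(K)$ setwise. Iterating: the nonnegative integer $|F\cap G|$ drops by one at each step, so after finitely many moves we arrive at a configuration with no monogon faces in either $\Gamma_F$ or $\Gamma_G$. By definition of the graphs, this says precisely that no arc component of $F\cap G$ with both endpoints on $\partial N(K)$ is boundary parallel in $F$ or in $G$.

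The only technical point is verifying that the innermost-face isotopy really can be realized ambiently in $X$ without crossing $\partial X$ or producing new intersections; this is the routine product-neighborhood construction for boundary-parallel arcs in properly embedded two-sided surfaces, and is not a serious obstacle.
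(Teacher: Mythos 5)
There is a genuine gap, and it comes from a misreading of what a monogon face of $\Gamma_F$ or $\Gamma_G$ is. By construction the edges of these graphs are arcs of $F \cap G$ with endpoints on $\partial N(K)$, and the vertices are the meridian disks of $\widehat F - F$ and $\widehat G - G$; so a monogon face is a disk $D$ in one of the surfaces whose boundary is $\alpha \cup \beta$ with $\alpha$ an arc of $F \cap G$ and $\beta$ an arc lying \emph{on} $\partial N(K)$ (a sub-arc of a boundary circle of $F$ or $G$), not an arc in the interior running ``parallel to'' a meridian. Consequently the isotopy that eliminates $\alpha$ must drag the \emph{other} surface across $D$ and hence across $\beta \subset \partial N(K)$, i.e.\ through $N(K)$. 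Your central claim --- that the isotopy ``can be chosen with support disjoint from $K$, $N(K)$, and $\widehat G$, so it preserves $n=|K\cap\widehat G|$ together with whatever minimality or thin-position hypothesis was imposed'' --- is therefore false: removing a monogon of $\Gamma_F$ means pushing $\widehat G$ across a disk meeting $\partial N(K)$, which changes $n$ (and, in the Heegaard case, the sweepout level). These hypotheses are not bystanders to be preserved; they are exactly what the argument must invoke.

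The paper's proof is structured accordingly and is not an innermost-disk cleanup. Monogons of $\Gamma_G$ simply cannot occur, for orientation reasons: since the components of $\partial F$ are coherently oriented on $\partial N(K)$, every intersection of a fixed component of $\partial G \cap \partial N(K)$ with $\partial F$ has the same sign, while an arc of $F \cap G$ must join intersection points of opposite sign; hence no interior edge of $\Gamma_G$ has both endpoints on the same vertex, and no isotopy is needed. For $\Gamma_F$, a monogon guides an isotopy of $\widehat G$ that \emph{decreases} $|K \cap \widehat G|$; when $\widehat G$ is essential this contradicts the minimality of $n$, so no monogon exists. When $\widehat G$ is a Heegaard surface this is the substantive case your proposal omits entirely: a single monogon is an upper or lower disk and does not by itself contradict anything; one needs Gabai's thin-position argument (as in Rieck) that for $K$ in thin position some nonsingular level of the sweepout admits neither an upper nor a lower disk, and $\widehat G$ must be \emph{chosen} to be such a level. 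Without that input, iterating your move would keep changing the level surface and there is no monotone quantity guaranteeing termination in a monogon-free configuration.
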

\begin{proof}
The arguments are standard.
Since components of $\partial F$ are oriented coherently on $\partial N(K)$, every point of intersection of a
given component of $\partial G$ with $\partial F$ has the same sign. Hence in particular, every interior
edge of $\Gamma_G$ has endpoints on distinct vertices of $\Gamma_G$, and there are never any complementary monogons.

If there is a monogon complementary to $\Gamma_F$ then $\widehat{G}$ 
can be pushed over such a monogon by an isotopy, thereby reducing the number of 
intersections with $K$; this is ruled out by hypothesis when $\widehat{G}$ is essential.

It remains to rule out monogon regions for $\Gamma_F$ when $\widehat{G}$ is a Heegaard 
surface (note that such monogons {\em may} contain interior loops of $F \cap \widehat{G}$ 
that bound compressing disks for $\widehat{G}$; see footnote~12 on page~635 of \cite{Rieck_paper}). 
Such a monogon region is either a {\em high} disk or a {\em low} disk for $\widehat{G}$, in the
terminology of \cite{Rieck_paper}. The existence of disjoint high and low disks at some level violates thinness;
Gabai's argument in \cite{Gabai_3} (also see Theorem~6.2 in \cite{Rieck_paper}) 
shows that for a knot in thin position, some level set of the sweepout admits
neither. Choosing $\widehat{G}$ to be such a level set, $\Gamma_F$ has no monogons.
\end{proof}

\begin{remark}
If $F\cap G$ has a simple closed curve component that bounds a disk in $G$, 
let $\gamma$ be an innermost such, i.e. $\gamma$ bounds a disk $D$ in $G$
such that $F\cap (\inte D)=\emptyset$. 
Since $F$ is incompressible in $X$, the loop $\gamma$ bounds a disk $E$ in $F$.
Surgering $F$ along $D$ produces a $2$-sphere $\Sigma = E\cup D$ together 
with a surface $F'$ that is essential in $X$, has $\partial F'=\partial F$, 
and is homeomorphic to $F$, and which may be isotoped so that 
$|F'\cap G| < |F\cap G|$. 
If $M$  is irreducible then $\Sigma$ bounds a $3$-ball and $F'$ is isotopic 
to $F$. 
So in that case we may assume that no simple closed curve component of 
$F\cap G$ bounds a disk in $G$. 

If $\widehat G$ is essential we will always assume that $n= |K\cap \widehat G|$ 
is minimal over all essential surfaces $\widehat G$ in $M$ of the given 
homeomorphism type.
This implies that $G$ is essential in $X$. 
Hence by the remarks above, interchanging the roles of $F$ and $G$, we may 
assume that no simple closed curve component of $F\cap G$ bounds a disk in $F$.
\end{remark}

Let $\Gamma_F$ as above be a graph on $\widehat F$ without monogons. 
If every complementary region to $\Gamma_F$ is a bigon, then either 
$\widehat F$ is a sphere, $\Gamma_F$ has exactly two vertices, with 
parallel interior edges running between them, or $\widehat F$ is a 
disk, $\Gamma_F$ has exactly one vertex, with parallel boundary edges 
running from the vertex to the boundary. 
We call such a $\Gamma_F$ a {\em beachball\/} (Figure~\ref{beachball_fig} indicates why), 
of the {\em first kind\/} and {\em second kind\/} respectively. 

\begin{remark}\label{beachball_remark}
If $\Gamma_F$ is a beachball of the first kind then $\|K\|=0$ and $K$ 
satisfies conclusion~(3) of Theorem~\ref{vanishing_theorem}.
\end{remark}

\begin{figure}[thbp]
\labellist
\small\hair 2pt
\endlabellist
\centering
\includegraphics[scale=0.5]{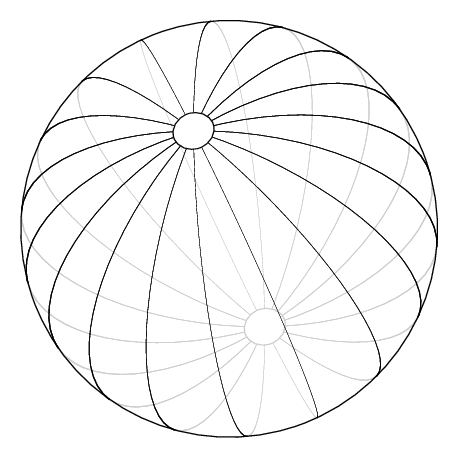}
\caption{A beachball of the first kind, with 18 complementary bigon regions.}\label{beachball_fig}
\end{figure}

Suppose $\Gamma_F$ is not a beachball.
The {\em reduced graph} $\overline{\Gamma}_F$ associated to $\Gamma_F$
is obtained from $\Gamma_F$ by collapsing all bigon regions. More generally, a reduced graph $\overline{\Gamma}$
in a surface $\widehat{F}$ is any graph with no complementary monogon or bigon regions.

\begin{lemma}\label{reduced_graph_euler_count}
Let $\overline{\Gamma}$ be a reduced graph in $\widehat{F}$ with $\overline{e}$ edges.
Then $-\chi(F) \ge \overline{e}/3$.
\end{lemma}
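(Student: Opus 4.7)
The plan is an Euler characteristic count on a cellular decomposition of $\widehat F$. First I would enlarge the $1$-skeleton to include $\partial\widehat F$, by adjoining the endpoints of the boundary edges of $\overline\Gamma$ as extra vertices and subdividing $\partial\widehat F$ at these points. This adds equal numbers of cells in dimensions $0$ and $1$, so the alternating sum is unchanged. Writing $\overline v,\overline e,\overline f$ for the numbers of vertices, edges, and complementary regions of $\overline\Gamma$ in $\widehat F$, this gives
\[
\chi(\widehat F)\;=\;\overline v - \overline e + \sum_\varphi \chi(\overline\varphi),
\]
the sum being over the complementary regions $\varphi$, with $\overline\varphi$ their closures.

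Next I would bound the two terms on the right. Each closure $\overline\varphi$ is a compact surface with non-empty boundary, hence $\chi(\overline\varphi)\le 1$, and so the sum is at most $\overline f$. Reducedness means no complementary region has fewer than three edges of $\overline\Gamma$ on its boundary (under the convention, forced by the second-kind beachball example, that arcs of $\partial\widehat F$ are not counted). Counting edge--face incidences with multiplicity and using that each edge of $\overline\Gamma$ borders at most two faces gives $3\overline f\le 2\overline e$, hence $\overline f\le 2\overline e/3$. Combining the two estimates yields $\chi(\widehat F)\le \overline v - \overline e/3$.

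Finally I would convert this into the desired bound on $-\chi(F)$. By construction $\widehat F$ is obtained from $F$ by capping off the $q$ components of $\partial F \cap \partial N(K)$ with disks, so $\chi(\widehat F)=\chi(F)+q$; and the vertices of $\overline\Gamma$ are in bijection with these capping disks, so $\overline v=q$. Therefore
\[
-\chi(F)\;=\;\overline v-\chi(\widehat F)\;\ge\;\overline v-\bigl(\overline v-\overline e/3\bigr)\;=\;\overline e/3,
\]
as required. The one point requiring attention is fixing the convention that distinguishes a ``bigon'' from a ``face with two graph-edges and some arcs of $\partial\widehat F$''; once that is pinned down, the count is routine. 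A minor bookkeeping issue is that any boundary components of $\widehat F$ disjoint from $\overline\Gamma$ are circles and contribute $0$ to $\chi$, so they are harmless for the bound.
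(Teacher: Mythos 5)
Your overall strategy is the same as the paper's: compute $\chi(\widehat F)$ from the decomposition induced by $\overline\Gamma$, use reducedness to bound the total face contribution by $2\overline e/3$, and convert to $-\chi(F)$ via $\chi(F)=\chi(\widehat F)-\overline v$. The first and third steps are fine (and your explicit handling of $\partial\widehat F$ is, if anything, more careful than the paper's). But the middle step has a genuine gap. You bound every face by $\chi(\overline\varphi)\le 1$ and then claim $3\overline f\le 2\overline e$ on the grounds that ``reducedness means no complementary region has fewer than three edges of $\overline\Gamma$ on its boundary.'' That is not what reduced means here: the definition only forbids monogon and bigon faces, i.e.\ \emph{disk} regions meeting the graph in one or two edge-sides. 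A non-disk complementary region may meet the graph in fewer than three edge-sides and still be allowed. For example, take $\widehat F$ a torus and $\overline\Gamma$ a single vertex with one edge forming an essential loop: this is reduced (the unique complementary region is an annulus, not a monogon), yet $\overline f=1$ and $\overline e=1$, so $3\overline f\le 2\overline e$ fails. Such configurations really do occur for the reduced graphs $\overline\Gamma_F$ arising in the paper, since collapsing bigons never removes an annular two-edge region, so you cannot legislate them away by strengthening the definition of ``reduced.''

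The repair is exactly the paper's version of the count: for a non-disk face use $\chi(\overline\varphi)\le 0$ rather than $\le 1$, so that only the disk faces contribute positively to $\sum_\varphi\chi(\overline\varphi)$, and apply the ``at least three edge-sides'' estimate only to disk faces, where it \emph{is} guaranteed by the absence of monogons and bigons (with your convention that arcs of $\partial\widehat F$ are not counted). Writing $\overline f_{d}$ for the number of disk faces, this gives $\sum_\varphi\chi(\overline\varphi)\le \overline f_{d}$ and $3\overline f_{d}\le 2\overline e$, hence $\sum_\varphi\chi(\overline\varphi)\le 2\overline e/3$, after which the rest of your argument goes through verbatim.
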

\begin{proof}
Let $\overline{v}$ be the number of vertices of $\overline{\Gamma}$, and $\overline{f}$ the number of complementary disk faces.
Non-disk faces contribute non-positively to Euler characteristic, so 
$\chi(\widehat{F}) \le \overline{v} - \overline{e} + \overline{f}$. Hence 
$\chi(F) = \chi(\widehat{F}) - \overline{v} \le \overline{f} - \overline{e}$. Since $\overline{\Gamma}$ is reduced, it
has no monogon or bigon faces, so $2\overline{e} \ge 3 \overline{f}$. Hence $3\chi(F) \le 3\overline{f} - 3\overline{e} \le -\overline{e}$.
\end{proof}

Edges in $\Gamma_F$ that cobound a bigon are said to be {\em parallel}. If $\Gamma_F$ is complicated, either
$-\chi(F)$ is large by Lemma~\ref{reduced_graph_euler_count}, or else there are many parallel edges. The next lemma
discusses the latter possibility. But first we introduce some terminology.

\begin{notation}
An interior edge of $\Gamma_G$ that joins vertices with index labels $i$ and $j$ will be called an {\em $(i,j)$-edge}.
\end{notation}

\begin{lemma}\label{parallel_edges}
If $\Gamma_F$ contains $(mn+1)$ parallel interior edges where $m \ge 1$, then there exists an index $k$ such that
\begin{enumerate}
\item{for each index $i$, the graph $\Gamma_G$ has $2m$ edges which are $(i,k-i)$-edges; and}
\item{for some index $i_0$, the graph $\Gamma_G$ has $(2m+1)$ edges which are $(i_0,k-i_0)$-edges.}
\end{enumerate}
\end{lemma}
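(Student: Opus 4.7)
The plan is to use a standard bigon-labelling argument. Let $e_1,\ldots,e_{mn+1}$ be the parallel interior edges, listed so that $e_s$ and $e_{s+1}$ cobound a bigon, and let $u,v$ be the two endvertices in $\Gamma_F$ (possibly $u=v$). Writing $L_u(s)$ and $L_v(s)$ for the labels of the two endpoints of $e_s$, the first step is to show that $L_u(s)+L_v(s)$ is independent of $s$ modulo $n$. This follows from traversing the boundary of the bigon between $e_s$ and $e_{s+1}$ with the bigon on its left: because the bigon lies outside the vertex-disks, the boundary runs along short arcs of $\partial u$ and $\partial v$ in the \emph{clockwise} sense, i.e.\ opposite to the anticlockwise order of the labels $1,2,\ldots,n$ around each vertex. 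Hence $L_u(s+1)\equiv L_u(s)-1$ and $L_v(s+1)\equiv L_v(s)+1\pmod n$, so the sum is a constant, which I call $k$. In $\Gamma_G$, each edge $e_s$ is therefore an $(L_u(s),k-L_u(s))$-edge.

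Now I count. Since the $mn+1$ values $L_u(1),L_u(2),\ldots,L_u(mn+1)$ form an arithmetic progression of length $mn+1$ with common difference $-1$ modulo $n$, each residue in $\{1,\ldots,n\}$ is attained exactly $m$ times, except for $i:=L_u(1)$, which is attained $m+1$ times. For any index $a$ with $a\not\equiv k-a\pmod n$, the $(a,k-a)$-edges in $\Gamma_G$ arising from the family are exactly the $e_s$ with $L_u(s)\in\{a,k-a\}$, so their number is $|L_u^{-1}(a)|+|L_u^{-1}(k-a)|$. This equals $2m+1$ if $\{a,k-a\}=\{i,k-i\}$, and equals $2m$ otherwise. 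Taking $i_0=i$ yields conclusion~(2), and every other index yields at least $2m$ $(i,k-i)$-edges in $\Gamma_G$, giving conclusion~(1).

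It remains to exclude the possibility that $2a\equiv k\pmod n$ for some $a$ in the image of $L_u$: such an $a$ would produce a parallel edge $e_s$ with $L_u(s)=L_v(s)=a$, i.e.\ a loop at vertex $a$ of $\Gamma_G$, contradicting the observation (in the proof of Lemma~\ref{no_monogons}) that the coherent orientation of the components of $\partial F$ on $\partial N(K)$ forbids interior edges of $\Gamma_G$ from joining a vertex to itself. Since $m\ge1$ gives $mn+1\ge n+1$, the map $L_u$ is surjective onto $\{1,\ldots,n\}$, so no such $a$ exists. The main subtlety in the argument is pinning down the opposite signs of the increments $L_u(s+1)-L_u(s)$ and $L_v(s+1)-L_v(s)$ from the orientation on $\widehat F$; the case $u=v$ is handled by the same analysis, each edge contributing two labels at $u$ whose increments along the family again have opposite signs.
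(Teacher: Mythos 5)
Your proof is correct and follows essentially the same route as the paper's (much terser) argument: coherent orientation of the vertices forces the two end-labels of each edge in the family to sum to a constant $k$, and the pigeonhole count over $mn+1$ consecutive labels gives one index hit $m+1$ times and all others $m$ times, yielding $2m+1$ and $2m$ edges respectively. Your explicit exclusion of the degenerate case $2a\equiv k\pmod n$ via the no-loops observation for $\Gamma_G$ is a detail the paper leaves implicit (in its parenthetical remark that $k$ is odd), but it is the same argument.
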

\begin{proof}
Since all boundary components of $F$ are oriented coherently on $K$ (one says the vertices of $\Gamma_F$ have the
same {\em sign}), there is some (odd) index $k$ such that the index labels of any edge in the family are $i$ and $k-i$
(taken mod $n$). Since the family contains $(mn+1)$ edges, there is an index label $i_0$ that appears $(m+1)$ times at one
end of the family, and $m$ times at the other end; this proves the second claim. Moreover, any index label $i$ 
appears at least $m$ times at each end, proving the first claim.
\end{proof}

Every edge of $\Gamma_G$ is an arc of intersection of $F$ with $G$, and therefore corresponds to an edge of
$\Gamma_F$, and conversely. The next two lemmas control what happens when there are pairs of edges that are parallel
on both graphs simultaneously.

\begin{lemma}\label{interior_parallel}
Suppose there are interior edges that are parallel on both $\Gamma_F$ and $\Gamma_G$. Then 
$(M,K) = (M',K') \# (\RP^3,\RP^1)$.
\end{lemma}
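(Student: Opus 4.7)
The plan is to produce an embedded $\RP^2 \subset M$ from the two parallel bigons whose regular neighborhood realizes the $(\RP^3,\RP^1)$ summand. Let $\alpha_1,\alpha_2$ be the two interior edges parallel in both graphs, let $D_F \subset F$ be the bigon face of $\widehat F$ they cobound (with corner arcs $\beta_1,\beta_2 \subset \partial F$), and let $D_G \subset G$ be the corresponding bigon face of $\widehat G$ (with corner arcs $\gamma_1,\gamma_2 \subset \partial G$).

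The first step is a combinatorial analysis of the four endpoints $p_1,\ldots,p_4$ of $\alpha_1 \cup \alpha_2$. Parallelism in $\Gamma_F$ forces $\alpha_1,\alpha_2$ to join the same pair of vertices $v_1,v_2$ of $\Gamma_F$, and parallelism in $\Gamma_G$ similarly forces them to join the same pair of vertices $w_1,w_2$ of $\Gamma_G$. Sign-coherence of $\partial F$ (exactly as used in Lemma~\ref{no_monogons}) rules out the ``aligned'' labeling of the $p_i$'s, because such a labeling would require two consecutive intersection points on some $v_i$ to carry the same label (requiring $n=1$, which is incompatible with the interior edges of $\Gamma_G$ joining distinct vertices). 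Thus, up to relabeling, $p_1 \in v_1 \cap w_1$, $p_2 \in v_2 \cap w_2$, $p_3 \in v_1 \cap w_2$, $p_4 \in v_2 \cap w_1$, and the bigon conditions on the corners $\beta_i,\gamma_j$ (no edge-endpoints in their interiors) then force the $p_i$ to be the four corners of a single rectangle $R$ in the grid $(\partial F \cup \partial G) \subset \partial N(K)$.

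The second step is to assemble $D_F$ and $D_G$ into an embedded surface $\Sigma \subset X$. The singular 2-complex $D_F \cup D_G$ has $\chi = \chi(D_F)+\chi(D_G)-\chi(\alpha_1 \cup \alpha_2) = 1+1-2 = 0$ and a single boundary circuit traversing $\beta_1,\gamma_1,\beta_2,\gamma_2$ in cyclic order around the rectangle $R$. I would resolve the transverse intersections along $\alpha_1,\alpha_2$ by a small pushoff. Tracking orientations: the two endpoints of each $\alpha_i$ carry opposite signs as endpoints of an arc of $F\cap G$, and checking the induced orientations of $\alpha_1,\alpha_2$ from $\partial D_F$ and $\partial D_G$ shows that exactly one of the two resolutions must be orientation-reversing. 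Consequently, $\Sigma$ is an embedded M\"obius band in $X$.

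The final step is to extend $\Sigma$ across $N(K)$ to obtain a closed one-sided surface $P \subset M$. Capping off $\partial \Sigma$ by a disk in $N(K)$ in the correct isotopy class produces an embedded $\RP^2 = P \subset M$, and the boundary sphere of a regular neighborhood of $P$ is a 2-sphere in $M$ which, together with the way $K$ sits relative to it, exhibits the decomposition $(M,K) = (M',K') \# (\RP^3,\RP^1)$.

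The main obstacle is the last step: one must correctly identify the isotopy class of the cap-off disk in $N(K)$ so that the resulting closed surface realizes the $(\RP^3,\RP^1)$ factor (as opposed to merely an $\RP^3$-summand disjoint from $K$). This requires reading off, from the rectangle $R$ and the cyclic orderings of labels at the four $p_i$, the precise homology class of $\partial\Sigma$ on $\partial N(K)$ and its linking behavior with $K$; the bookkeeping is delicate but forced by the combinatorial setup of step one.
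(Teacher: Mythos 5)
Your steps 1 and 2 are essentially correct and reproduce the first half of the standard argument (for the record, the paper's own proof is a one-line citation to the proof of Proposition~1.3 of Gordon--Litherland, plus the observation that closed curves of $F\cap G$ do not invalidate it): parallelism in both graphs together with sign-coherence of the vertices of $\Gamma_F$ forces the two edges to join the same pair of adjacent meridian disks $D_i$, $D_{i+1}$ of $\widehat G\cap N(K)$, the four corner arcs close up into a single circuit bounding a grid rectangle $R$ in the annulus $\partial N(K)\cap \partial H$ (where $H$ is the $1$-handle of $N(K)$ between $D_i$ and $D_{i+1}$), and $D_F\cup D_G$ is a M\"obius band $\Sigma$ with $\partial\Sigma=\partial R$. (You should, however, address the possibility that the bigon faces contain closed curves of $F\cap G$ in their interiors, which a priori spoils embeddedness of $\Sigma$; the paper explicitly flags this point.)

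The genuine gap is step 3, and it is not merely ``delicate bookkeeping'': the step as designed cannot succeed. Since $\partial\Sigma$ bounds the disk $R$ in $\partial N(K)$ itself, it has winding number $0$ in the solid torus $N(K)$, so \emph{every} disk in $N(K)$ with boundary $\partial\Sigma$ meets $K$ with algebraic (hence mod $2$) intersection number $0$; there is no isotopy class of capping disk to choose. But the $\RP^2$ sitting inside an $(\RP^3,\RP^1)$ summand meets $\RP^1$ with mod $2$ intersection number $1$, so no capped-off $\Sigma$ can ever be that $\RP^2$. Concretely, any capping disk is isotopic rel boundary to $R$ and hence off $K$, and a regular neighborhood of the resulting $\RP^2$ only yields $M=M''\#\RP^3$ with $K\subset M''$ --- an $\RP^3$ summand on the wrong side of the splitting sphere, which is strictly weaker than the lemma (and insufficient for its application in Sublemma~\ref{lem9}, where the pair decomposition is fed into Theorem~\ref{connect_sum_theorem}). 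The correct construction must include the arc of $K$ from the outset: take $Y=N(H\cup D_F\cup D_G)$. Then $N(H\cup D_F)$ is a solid torus, and $D_G$ is attached to it along its entire boundary as a $2$-handle whose attaching circle runs twice coherently over the $1$-handle $N(D_F)$ (this is exactly where coherent orientation of $\partial F$ on $\partial N(K)$ is used; incoherent orientations would give winding number $0$). Hence $Y$ is a once-punctured $\RP^3$, $\partial Y$ is a sphere meeting $K$ in the two points $K\cap(D_i\cup D_{i+1})$, and the arc $K\cap H$ completes to the $\RP^1$ factor; verifying that last point is the remaining content of the Gordon--Litherland argument.
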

\begin{proof}
Since all vertices of $\Gamma_F$ have the same sign, this follows from the argument in \cite{Gordon_Litherland}, proof
of Proposition~1.3. We observe that this argument is still valid if $F \cap G$ has simple closed curve components.
\end{proof}

\begin{lemma}\label{boundary_parallel}
Suppose there are boundary edges that are parallel on both $\Gamma_F$ and $\Gamma_G$. Then $K$ is
isotopic into $\partial M$.
\end{lemma}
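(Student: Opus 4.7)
The plan is to build, from the two parallel boundary edges, a properly embedded annulus $A \subset X$ one of whose boundary components is a longitude of $K$ on $\partial N(K)$ and the other a simple closed curve on $\partial M$; then $K$ is pushed across $A$ to produce the desired isotopy.

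To set this up, let $e_1, e_2$ be the parallel boundary edges; they share a common vertex $v^F$ of $\Gamma_F$, corresponding to a component $\rho$ of $\partial F \cap \partial N(K)$, and a common vertex $v^G$ of $\Gamma_G$, corresponding to a meridian $\mu$ of $\partial G \cap \partial N(K)$. Uncapping $v^F$, the bigon in $\widehat F \smallsetminus \Gamma_F$ cobounded by $e_1$ and $e_2$ becomes an embedded rectangle $B_F \subset F$ with cyclic boundary arcs $e_1, a_F, e_2, b_F$, where $a_F \subset \rho$ has interior disjoint from $\partial G \cap \partial N(K)$ and $b_F \subset \partial F \cap \partial M$; the analogous construction on $\widehat G$ yields a rectangle $B_G \subset G$ with boundary arcs $e_1, a_G, e_2, b_G$, where $a_G \subset \mu$ and $b_G \subset \partial G \cap \partial M$. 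I glue $B_F$ and $B_G$ along $e_1$ and $e_2$ to produce an embedded surface $A \subset X$ of Euler characteristic zero whose boundary, by a direct trace, consists of two disjoint simple closed curves $\alpha = a_F \cup a_G \subset \partial N(K)$ and $\beta = b_F \cup b_G \subset \partial M$; hence $A$ is an annulus.

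The heart of the argument is to show that $\alpha$ is a longitude of $K$, equivalently that $[\alpha] \cdot [\mu] = \pm 1$ in $H_1(\partial N(K))$. Pick a parallel copy $\mu'$ of $\mu$ on $\partial N(K)$ disjoint from every component of $\partial G \cap \partial N(K)$; then $a_G \subset \mu$ is disjoint from $\mu'$, so the intersection count reduces to $[a_F] \cdot [\mu']$. In the universal cover $\R^2 \to \partial N(K)$, a lift of $a_F$ is a line segment of the slope of $\rho$ whose endpoints are two consecutive (along this lift of $\rho$) preimages of $\partial G \cap \partial N(K)$ that both lie over $\mu$. The bigon condition then forces these two endpoints to differ by exactly one longitudinal period of $\rho$ (and as a byproduct shows that $\partial G \cap \partial N(K)$ must consist of a single meridian, else consecutive preimages would alternate between different meridians). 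Consequently the lift of $a_F$ crosses exactly one lift of $\mu'$ transversely, giving $[a_F] \cdot [\mu'] = \pm 1$.

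With $\alpha$ now shown to be a longitude, the standard theory of curves on the solid torus $N(K)$ provides an isotopy of $\alpha$ to $K$ within $N(K)$. Concatenating with the isotopy from $\alpha$ to $\beta \subset \partial M$ across the annulus $A$ yields the required isotopy of $K$ into $\partial M$. The main obstacle is the intersection-number computation: the parallelism hypothesis on both graphs is what forces $a_F$ to span exactly one longitudinal period of $\rho$, ensuring that $\alpha$ is a longitude and not a non-trivial cable of $K$.
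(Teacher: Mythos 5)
Your construction is essentially the one the paper relies on (the paper simply cites \cite{CGLS}, Lemma~2.5.4): glue the two bigons $B_F\subset F$ and $B_G\subset G$ along $e_1\cup e_2$ to obtain an annulus $A$ with one boundary component $\alpha=a_F\cup a_G$ on $\partial N(K)$ and one on $\partial M$, then verify that $\alpha$ meets the meridian once. The gap is in the slope computation, and it comes from reading ``parallel'' as ``cobound a \emph{face} of $\widehat F-\Gamma_F$''. In this paper (as in \cite{CGLS}) two edges are parallel if they cobound a disk whose other two boundary arcs lie on the fat vertex and on $\partial\widehat F$, and that disk may contain further edges of the family between them. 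Your reading forces the endpoints of $a_F$ to be consecutive among \emph{all} points of $\rho\cap\partial G$; since consecutive edge-endpoints at a vertex of $\Gamma_F$ carry consecutive labels $i,i+1\pmod n$, you correctly deduce $n=1$ --- but that conclusion should have been a warning sign rather than a byproduct. The lemma is invoked precisely in situations with $n>1$: in Sublemma~\ref{parallel_boundary_edges_sublemma} and Sublemma~\ref{lem13} the pair produced is adjacent in $\Gamma_G$ (resp.\ $\Gamma_P$) but sits $n$ (or $2n$) steps apart inside a band of $\Gamma_F$-parallel edges, so your argument does not apply to the pairs the lemma is actually fed.

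In general $a_F$ runs over $k$ intermediate edge-endpoints with $k+1\equiv 0\pmod n$, and your universal-cover computation then gives $[\alpha]\cdot[\mu]=\pm(k+1)/n$. This is $\pm1$ exactly when the endpoints of $a_F$ are consecutive among the points of $\rho\cap\mu$ (i.e.\ $k+1=n$); that is what must be arranged --- by passing to an innermost same-label pair, as in the proof of CGLS~2.5.4 --- and it is not automatic from the bare hypothesis (if $k+1=2n$ the curve $\alpha$ winds twice and is not a longitude). A secondary omission: even when both bigons are faces of the graphs, their interiors may meet simple closed curve components of $F\cap G$, or arcs of $F\cap G$ with both endpoints on $\partial M$ (which are not edges of the graphs), so the embeddedness of $A$, i.e.\ $B_F\cap B_G=e_1\cup e_2$, needs justification; the paper explicitly flags this point when citing \cite{CGLS}.
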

\begin{proof}
By \cite{CGLS}, Lemma~2.5.4, such a pair of boundary edges gives an essential annulus $A$ in $X$ with one boundary
component on $\partial M$ and one on $\partial N(K)$, the latter having intersection number $1$ with the meridian of
$K$. Again this argument is valid in the presence of simple closed curve components of $F \cap G$.
This annulus can be used to define an isotopy of $K$ into $\partial M$.
\end{proof}

\subsection{Cables, satellites and tori}

In the sequel, many arguments will depend on relativizing to a knot in a simple $3$-manifold with boundary
(i.e.\ a submanifold of $M$). In this section, we analyze the most important special cases.

\begin{definition}
Let $K$ be a knot in $M$, with regular neighborhood $N(K)$. Let $K'$ be a simple closed curve on $\partial N(K)$ that
is essential in $N(K)$. Then we call $K'$ a {\em cable} of $K$.

A knot $K'$ contained in $N(K)$ is called a {\em satellite} of $K$ if it is not
contained in a $3$-ball in $N(K)$. If $[K']=k[K] \in H_1(N(K))$ then $k$ is called
the {\em winding number} of the satellite.
\end{definition}

\begin{remark}
Note that our definitions of a satellite and of a cable include the trivial cases where $K'$ is
isotopic to $K$ in $N(K)$.
\end{remark}

\begin{proposition}\label{satellite_proposition}
Let $K_0$ be a knot in a $3$-manifold $M$ whose exterior has incompressible boundary, and let $K$ be a satellite of $K_0$
with winding number $k>0$. Then $\|K\|\ge k\|K_0\|$.
\end{proposition}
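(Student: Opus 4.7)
The plan is to take a near-optimal good $p$-Seifert surface $S$ for $K$ and cut it along $T := \partial N(K_0)$, extracting from $S_0 := S \cap X_0$ (where $X_0 := M - \inte N(K_0)$) a $pk$-Seifert surface for $K_0$ with $\eta(S_0) \le \eta(S)$. Since $\eta(S_0)/(pk) = (\eta(S)/p)/k$ modulo the slack, the infimum will then give $k\|K_0\| \le \|K\|$.

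Setup and cleanup. Fix $\varepsilon>0$ and choose a good $p$-Seifert surface $S$ with $\eta(S)/p \le \|K\|+\varepsilon$, and write $S_1 := S \cap N(K_0)$. The hypotheses yield three incompressibility statements: $T$ is incompressible in $X_0$ (given); and since $K$ is a satellite of $K_0$ with winding $k>0$, $K$ is geometrically essential in $N(K_0)$, so standard arguments show that both $T$ and $\partial N(K)$ are incompressible in $N(K_0) - \inte N(K)$. These, together with the incompressibility of $S$ in $X$ (which comes from goodness), let one isotope $S$ so that every component of $S \cap T$ is essential on $T$ and so that neither $S_0$ nor $S_1$ has a disk or sphere component. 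The critical step is ruling out disk components of $S_1$: such a disk would have boundary either a meridian of $K_0$, which is impossible since any disk in $N(K_0)$ with meridional boundary meets $K$ at least $k$ times by the winding hypothesis; or an essential curve on $\partial N(K)$, which is impossible by incompressibility of $\partial N(K)$ in $N(K_0) - \inte N(K)$. Sphere components do not arise, since $S$ is connected with non-empty boundary. Hence every component of $S_0$ and $S_1$ has $\chi \le 0$.

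Euler and boundary count. Viewing $S_1$ as a $2$-chain in the solid torus $N(K_0)$, $0 = [\partial S_1] = [\partial S] - [\partial S_0]$ in $H_1(N(K_0))$, so $[\partial S_0] = p[K] = pk[K_0]$, using $[K] = k[K_0]$. Thus $S_0$ is a $pk$-Seifert surface for $K_0$ (not necessarily good). Since each component of $S_i$ has $\chi \le 0$, $\chi^-(S_i)=\chi(S_i)$, and
\[
\chi^-(S_0) \;=\; \chi(S) - \chi(S_1) \;\ge\; \chi(S) \;=\; \chi^-(S)
\]
provided $\chi(S) \le 0$. The latter holds unless $S$ is a disk, i.e.\ $\|K\|=0$; in that exceptional case the classification of Theorem~\ref{vanishing_theorem} combined with the incompressibility of $T$ is easily seen to force $\|K_0\|=0$, so the inequality is trivial. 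In the main case $\eta(S_0) \le \eta(S)$, and applying Lemma~\ref{p_surface_is_good} to $S_0$ (which in particular tubes oppositely-oriented boundary pairs) yields a good $p'$-Seifert surface $S_0'$ for $K_0$ with
\[
\|K_0\| \;\le\; \eta(S_0')/p' \;\le\; \eta(S_0)/(pk) \;\le\; \eta(S)/(pk) \;\le\; (\|K\|+\varepsilon)/k.
\]
Letting $\varepsilon \to 0$ gives $\|K\| \ge k\|K_0\|$.

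Main obstacle. The technical heart is the cleanup, and in particular ruling out disk components of $S_1$; this is the one step where the nonzero winding assumption is genuinely used, and it is what prevents the naive analogue of the inequality from failing for winding $0$ satellites (where $S$ might split off meridional disks of $K_0$ whose removal would increase $\eta$ of the resulting Seifert surface for $K_0$ beyond $\eta(S)$).
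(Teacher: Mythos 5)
Your proof is correct and follows essentially the same route as the paper's: cut the near-optimal good $p$-Seifert surface $S$ along $\partial N(K_0)$, rule out disk components on either side using incompressibility, observe that $[\partial S_0]=p[K]=pk[K_0]$ in $H_1(N(K_0))$ so that $S_0$ is a $pk$-Seifert surface for $K_0$ with $\eta(S_0)\le\eta(S)$, and pass to the infimum. Your treatment is somewhat more detailed than the paper's (which simply asserts that no component of $S_0$ or $F$ is a disk and that $\eta(S)=\eta(F)+\eta(S_0)$), but the extra care about disk components of $S\cap N(K_0)$ and the degenerate case $\chi(S)>0$ only makes the same argument more explicit.
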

\begin{proof}
By the definition of satellite, $K$ is contained in a solid torus $V$ in $M$
whose core is $K_0$.
Let $X_0 = M- \inte V$. 
Let $S$ be a good $p$-Seifert surface for $K$, $F = S\cap (V - \inte N(K))$, and $S_0 = S\cap X_0$.

If $K_0$ is $p$-trivial for some $p$ the result is obvious. 
So we may assume that $\partial V$ is incompressible in $X_0$ 
(see Remark~\ref{remark_200}). 
Thus $\partial V$ is incompressible in $M-K$, and we may therefore assume 
that no component of $S_0$ or $F$ is a disk. 
Hence $\eta (S) = \eta (F) + \eta (S_0)$.

In $H_1(V)$, there is equality $[\partial S_0] = [\partial S] = p[K] = pk[K_0]$. Therefore 
$$\|K_0\| \le \eta(S_0)/pk \le \eta(S)/pk$$
Since $S$ can be chosen so that $\eta(S)/p$ is arbitrarily close to $\|K\|$, the result follows
(or one can just apply Proposition~\ref{rational_genus_is_achieved}).
\end{proof}

\begin{proposition}\label{compressible_torus_proposition}
Let $M$ be a $3$-manifold whose boundary contains a compressible torus $T$, 
and let $K$ be a knot in $M$ such that $T$
is incompressible in $M-K$. Let $F$ be a relative $p$-Seifert surface for $K$. Then either
\begin{enumerate}
\item{$-\chi^-(F) > p/6$; or}
\item{$K$ is isotopic into $T$.}
\end{enumerate}
\end{proposition}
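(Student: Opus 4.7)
The plan is to apply the graph-theoretic machinery of subsection~3.1. Choose a compressing disk $\widehat{G}$ for $T$ in $M$ minimizing $n := |K \cap \widehat{G}|$; since $T$ is incompressible in $M-K$, we have $n \geq 1$. After isotoping $F$ so that $F \cap G$ is transverse with minimal intersection, we obtain the labeled graphs $\Gamma_F \subset \widehat{F}$ and $\Gamma_G \subset \widehat{G}$, and Lemma~\ref{no_monogons} rules out complementary monogons. Up to the standard reductions of Lemma~\ref{p_surface_is_good} we may assume $F$ is good. The essential constraint to exploit is that $\widehat{G}$ is a disk.

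The argument is cleanest when $n = 1$. Then $\Gamma_G$ has a single vertex, and since every interior edge of $\Gamma_G$ joins distinct vertices (proof of Lemma~\ref{no_monogons}), all $p$ edges of $\Gamma_G$ are boundary edges. These $p$ arcs, running in the punctured disk $\widehat{G}$ from the vertex to $\partial\widehat{G} \subset T$, are mutually parallel in $\widehat{G}$. If some pair of them is also parallel in $\Gamma_F$, then Lemma~\ref{boundary_parallel} produces an essential annulus in $X$ from $\partial N(K)$ to $\partial M$ whose $\partial M$-end lies on $T$ (both endpoints on $\partial M$ being on $\partial\widehat{G} \subset T$), giving the desired isotopy of $K$ into $T$. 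Otherwise the $p$ boundary edges are pairwise non-parallel in $\Gamma_F$, so the reduced graph $\overline{\Gamma}_F$ has $\overline{e} \geq p$ edges, and Lemma~\ref{reduced_graph_euler_count} yields $-\chi(F) \geq p/3 > p/6$.

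For $n \geq 2$ I would argue by the same dichotomy applied to $\overline{\Gamma}_F$. If $\overline{e} > p/2$, Lemma~\ref{reduced_graph_euler_count} gives $-\chi^-(F) = -\chi(F) > p/6$ immediately (using that $F$ has no disk or sphere components). Otherwise, the identity $2e_i + e_\partial = pn$ forces $e_i + e_\partial \geq pn/2$, so some parallel class in $\Gamma_F$ contains more than $n$ edges; combining with Lemma~\ref{parallel_edges} (or its boundary-edge analog obtained by tracking labels), one extracts a pair of edges that are parallel in \emph{both} graphs. A parallel boundary pair gives $K$ isotopic into $T$ via Lemma~\ref{boundary_parallel} exactly as in the $n = 1$ case. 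A parallel interior pair would force $(M,K) = (M',K')\,\#\,(\RP^3,\RP^1)$ by Lemma~\ref{interior_parallel}; but then, since $K$ lies in the $\RP^3$-summand and the compressing disk $\widehat{G}$ may be isotoped off the connect-sum sphere into the $M'$-side, $\widehat{G}$ still compresses $T$ in $M - K$, contradicting the standing hypothesis.

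The main obstacle is the combinatorial step for $n \geq 2$: ensuring the pigeonhole argument produces simultaneously parallel edges of the usable (boundary) type, and organising the labelling so that Lemma~\ref{parallel_edges} applies. The constant $1/6$ traces to the inequality $2\overline{e} \geq 3\overline{f}$ underlying Lemma~\ref{reduced_graph_euler_count}, and matches the factor appearing in the analogous Proposition~\ref{prop2.24}, suggesting it is tight.
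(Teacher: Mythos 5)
Your $n=1$ case is essentially the paper's, except that your dichotomy ``some pair parallel in $\Gamma_F$ / all pairwise non-parallel'' misses the subcase $p=1$ with a single boundary edge: there $\Gamma_F$ is a beachball of the second kind, the reduced graph does not exist, and your claimed bound $-\chi(F)\ge p/3$ is false ($F$ is an annulus). That subcase still lands in conclusion~(2), since the annulus itself isotopes $K$ into $T$, but it must be separated out rather than fed to Lemma~\ref{reduced_graph_euler_count}.

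The real gap is in your treatment of interior edges when $n\ge 2$. You propose to extract from a large parallel family in $\Gamma_F$ a pair of interior edges parallel in \emph{both} graphs and then contradict the hypotheses via Lemma~\ref{interior_parallel}. Neither half of this works. First, $\Gamma_P$ lives in a disk with $n$ vertices, so two interior edges with the same endpoints need not be parallel in $\Gamma_P$ (the disk they cobound may contain other vertices), and unlike the torus case of Theorem~\ref{lens_theorem} you have given no innermost argument to repair this. Second, even granting $(M,K)=(M',K')\,\#\,(\RP^3,\RP^1)$, this is not a contradiction: in that decomposition $K$ is a connected sum and meets the $M'$ side, so pushing the compressing disk off the summing sphere does not make it disjoint from $K$; moreover the proposition carries no irreducibility hypothesis that such a decomposition would violate. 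The idea you are missing is the paper's Sublemma~5.6: any family of $\lfloor n/2\rfloor+1$ parallel interior edges contains a Scharlemann cycle, which (as in Scharlemann's Prop.~4.7) is used to tube and compress the disk $D$ to a new compressing disk meeting $K$ in $n-2$ points, contradicting minimality of $n$. This bounds interior parallel classes by $\lfloor n/2\rfloor$ rather than $n$, which is also what makes the final count $\overline{e}\ge pn/(2n-2)>p/2$ come out strictly, as the statement $-\chi^-(F)>p/6$ requires. Your boundary-edge sketch ($(2n-1)$ parallel boundary edges forcing a pair parallel in both graphs via an outermost argument in the disk) is the right idea and matches the paper's Sublemma~5.7, but the interior-edge mechanism has to be replaced wholesale.
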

\begin{proof}
It is enough to prove the proposition under the assumption that $F$ is good.

Let $D$ be a compressing disk for $\partial M$ in $M$, such that $D \cap N(K)$ consists of $n$ meridian disks
of $N(K)$, with $n$ minimal. By hypothesis, $n>0$. Let $X = M - \inte N(K)$ as usual.

Let $P = D \cap X$, a planar surface. By the minimality of $n$, the surface $P$ is incompressible in $X$. Let
$\Gamma_F$ and $\Gamma_P$ be the intersection graphs in $\widehat{F}$ and $D$ respectively. There are two cases
to consider.

{\noindent \bf Case A.} ($n=1$) In this case, $P$ is an annulus, $\Gamma_F$ and $\Gamma_P$ have $p$ edges, and all
edges are boundary edges. In particular, all edges of $\Gamma_P$ are parallel. If $\Gamma_F$ has a pair of parallel
edges then $K$ is isotopic into $T$ by the proof of Lemma~\ref{boundary_parallel}. If not, then either $\Gamma_F$ is a
beachball of the second kind with a single edge, or the reduced graph $\overline{\Gamma}_F$ is defined and is
equal to $\Gamma_F$. In the first case the annulus $F$ defines an isotopy of $K$ into
$T$. In the second case $-\chi(F) \ge p/3$ by Lemma~\ref{reduced_graph_euler_count}.

\medskip

{\noindent \bf Case B.} ($n>1$) This case depends on two sublemmas.

\begin{sublemma}\label{parallel_interior_edges_sublemma}
The graph $\Gamma_F$ contains no family of $(\lfloor n/2 \rfloor +1)$ parallel interior edges.
\end{sublemma}
\begin{proof}
Since all the vertices of $\Gamma_F$ have the same sign, such a family would contain a (length $2$) 
{\em Scharlemann cycle} (hereafter referred to as an {\em $S$-cycle}) 
i.e.\ a configuration of the form depicted in Figure~\ref{S_cycle}.
\begin{figure}[thbp]
\labellist
\small\hair 2pt
\pinlabel $i+1$ at 110 215
\pinlabel $i$ at 125 188
\pinlabel $i$ at 275 215
\pinlabel $i+1$ at 290 188
\endlabellist
\centering
\includegraphics[scale=0.5]{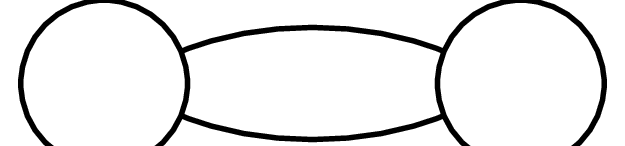}
\caption{An $S$-cycle}\label{S_cycle}
\end{figure}
As in \cite{Scharlemann_smooth}, Prop.~4.7 this $S$-cycle can be used to tube and compress $D$, giving a new
compressing disk $D'$ with $\partial D' = \partial D$ and $|D \cap K| = n-2$, contradicting minimality of $n$.
\end{proof}

\begin{sublemma}\label{parallel_boundary_edges_sublemma}
If $\Gamma_F$ contains $(2n-1)$ parallel boundary edges then $K$ is isotopic into $T$.
\end{sublemma}
\begin{proof}
In a family of $(2n-1)$ parallel boundary edges, the labels (on the vertex at one end of the family) cycle through
a full set of labels twice, with exactly one exception. Hence in $\Gamma_P$ we get a pair of boundary
edges at each vertex except (at most) one. Since $D$ is a disk, a pair of boundary edges together with a common vertex
separates $D$, so there is an outermost pair with the property that one of the complementary regions contains no
other vertex of $\Gamma_P$. But this means that the outermost pair of boundary edges are in fact parallel, so we obtain
a pair of boundary edges that are parallel in both $\Gamma_F$ and $\Gamma_P$. The desired result now follows
from the proof of Lemma~\ref{boundary_parallel}.
\end{proof}

We now complete the proof in Case B. First note that Sublemma~\ref{parallel_interior_edges_sublemma} 
implies that $\Gamma_F$ is not a beachball of the first kind. 
If $\Gamma_F$ is a beachball of the second kind then by Sublemma~\ref{parallel_boundary_edges_sublemma}
we may assume that $p=1$. 
But then the boundary component of $F$ that lies on $\partial N(K)$ 
intersects the meridian of $K$ exactly once, and so $F$ defines an isotopy 
of $K$ into $T$. 
We may therefore suppose that the reduced graph $\bar \Gamma_F$ of 
$\Gamma_F$ on $\widehat F$ exists, with, say, $\bar e_i$ interior edges 
and $\bar e_\partial$ boundary edges. By Sublemma~\ref{parallel_interior_edges_sublemma}
each interior edge of $\overline{\Gamma}_F$ corresponds to at most $\lfloor n/2 \rfloor$ edges of $\Gamma_F$,
and by Sublemma~\ref{parallel_boundary_edges_sublemma} we may assume that each boundary edge of $\overline{\Gamma}_F$
corresponds to at most $(2n-2)$ edges of $\Gamma_F$. Thus edges of $\overline{\Gamma}_F$ contribute at most $n$ and
$(2n-2)$ to the sum of valences at all vertices of $\Gamma_F$ respectively. Since this total sum is $pn$, and since
$n \le 2n-2$, we get $\overline{e} = \overline{e}_i + \overline{e}_\partial \ge pn/(2n-2) > p/2$. The conclusion now
follows from Lemma~\ref{reduced_graph_euler_count}.
\end{proof}

\begin{corollary}\label{double_cable_corollary}
Let $K_0$ be a knot in a $3$-manifold $M$ that is not $m$-trivial for 
any $m$, and let $K$ be a non-trivial cable of a non-trivial cable of $K_0$.
Then $\|K\| > 1/12$.
\end{corollary}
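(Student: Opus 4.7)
The plan is to enlarge $N(K_0)$ to a canonical solid torus $\tilde V_0 \subset M$ whose core is $K_0$ and whose interior contains $K$, and then to apply Proposition~\ref{compressible_torus_proposition} to $(\tilde V_0, K)$ with compressible boundary torus $T = \partial \tilde V_0$. The iterated cable structure, together with the constraints $q_1, q_2 \ge 2$, will ensure both the hypotheses and the failure of the escape clause of that proposition.

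To construct $\tilde V_0$, set $V_0 = N(K_0)$ and let $V_1$ be a regular neighborhood of $K_1$ slightly enlarged so that $K \subset \inte V_1$. Since $K_1 \subset \partial V_0$, the solid torus $V_1$ straddles $\partial V_0$, with $V_0 \cap V_1$ a half-tubular neighborhood of $K_1$ inside $V_0$. Setting $\tilde V_0 = V_0 \cup V_1$, this attachment amounts to an annular collar extension of $V_0$ along the curve $K_1 \subset \partial V_0$, which does not change the topological type, so $\tilde V_0$ is a solid torus with core $K_0$. If the two non-trivial cables have parameters $(p_i, q_i)$ with $q_i \ge 2$, then in $H_1(\tilde V_0) \cong \Z\langle [K_0] \rangle$ one has $[K] = q_2 [K_1] = q_1 q_2 [K_0]$.

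Next I apply Proposition~\ref{compressible_torus_proposition}. The meridian disk of $\tilde V_0$ compresses $T$ inside $\tilde V_0$, but it meets $K$ in $q_1 q_2 \ge 4$ points, so $T$ is incompressible in $\tilde V_0 - K$. The proposition then gives, for any good relative $p$-Seifert surface $F$ of $K$ in $\tilde V_0$, either $-\chi^-(F) > p/6$, or $K$ is isotopic into $T$. To exclude the second possibility, choose a solid torus $V_1^\star$ with $N(K) \subset V_1^\star \subsetneq V_1$ and $\partial V_1^\star \subset \inte \tilde V_0$. Then $\tilde V_0 \setminus N(K)$ decomposes as the union of the $(p_1,q_1)$-cable space $\tilde V_0 \setminus V_1^\star$ and the $(p_2,q_2)$-cable space $V_1^\star \setminus N(K)$ along $\partial V_1^\star$, whose Seifert fibrations induce slopes on $\partial V_1^\star$ that are longitudinal from outside and $(p_2, q_2)$ from inside; since $q_2 \ge 2$, these slopes differ, so $\partial V_1^\star$ is essential in $\tilde V_0 \setminus N(K)$. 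But if $K$ were isotopic into $T$ it would be a simple cable of $K_0$, making $\tilde V_0 \setminus N(K)$ an atoroidal single cable space --- a contradiction.

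Finally I pass back to $M$: after reducing to the prime summand containing $K$ we may assume $M$ is irreducible, and we let $S$ be a good $p$-Seifert surface for $K$ in $M$ realizing $\|K\|$ (Proposition~\ref{rational_genus_is_achieved}). Set $F = S \cap \tilde V_0$ and $S' = S \setminus F$. Incompressibility of $\partial \tilde V_0$ in $\tilde V_0 - K$ (above) and in $M \setminus \tilde V_0$ (from Remark~\ref{remark_200}, since $K_0$ is not $m$-trivial), combined with irreducibility of $M$, permit standard innermost-disk arguments to eliminate disk components of $F$ and $S'$; after this, $-\chi^-$ is additive across the cut, so $-\chi^-(S) \ge -\chi^-(F) > p/6$ and hence $\|K\| > 1/12$. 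The principal subtlety is the JSJ argument, where exhibiting the essential torus $\partial V_1^\star$ via the slope mismatch distinguishes a genuine iterated cable from any simple cable of $K_0$ and thereby excludes the ``$K$ isotopic into $T$'' escape clause of Proposition~\ref{compressible_torus_proposition}.
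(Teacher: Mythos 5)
Your proof is correct and follows essentially the same route as the paper's: split a norm-minimizing $p$-Seifert surface along the boundary of a solid torus neighborhood $V$ of $K_0$ containing $K$ (incompressible on both sides by the hypotheses, so $\eta$ is additive), apply Proposition~\ref{compressible_torus_proposition} to the piece inside $V$, and rule out the escape clause because an iterated non-trivial cable is not isotopic into $\partial V$. The only real difference is that you spell out that last point explicitly, via the essential torus separating the two cable spaces in $V - \inte N(K)$, where the paper simply asserts ``since $K$ is not a cable of $K_0$.''
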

\begin{proof}
Let $V$ be a regular  neighborhood of $K_0$ containing $K$, and let 
$X_0 = M-\inte V$. 
By the hypothesis on $K_0$, $\partial V$ is incompressible in $X_0$. 
Let $S$ be a good $p$-Seifert surface for $K$ in $M$, such that 
$\|K\| = \eta (S)/p$, and let $F= S\cap V$ and $S_0 = S\cap X_0$. 
Then $F$ is a relative $p$-Seifert surface for $K$ in $V$.
Since $\partial V$ is incompressible in $M-K$, we may assume that no component 
of $F$ or $S_0$ is a disk, and hence $\eta (S) = \eta (F) + \eta(S_0)$, 
giving $\eta (F) \le \eta (S)$.  

Since $K$ is not a cable of $K_0$, Proposition~\ref{compressible_torus_proposition}, applied to 
$K\subset V$, implies that $-\chi (F)>p/6$.
Therefore $\|K\| = \eta (S)/p\ge \eta (F)/p >1/12$.
\end{proof}

The next proposition considers knots contained in a neighborhood of a torus.

\begin{proposition}\label{torus_product_proposition}
Let $K$ be a knot in $T^2 \times I$, and let $F$ be a relative $p$-Seifert surface for $K$. Then either
\begin{enumerate}
\item{$-\chi^-(F) \ge p/3$; or}
\item{$K$ is isotopic into $T^2 \times \lbrace 1/2\rbrace$.}
\end{enumerate}
\end{proposition}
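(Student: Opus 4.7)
The plan is to adapt the graph-intersection strategy of Section~\ref{graph_section}, as used in Propositions~\ref{compressible_torus_proposition} and~\ref{prop2.24}, applied to the essential torus $T := T^2\times\{1/2\}$ lying in the interior of $M := T^2\times I$.  Suppose $K$ is not isotopic into $T$; I will show $-\chi^-(F)\ge p/3$.  Since $K$ is rationally null-homologous in $M$, one has $[K]\cdot [T] = 0$, so after transverse isotopy $|K\cap T|$ is even, and since any two horizontal tori in $T^2\times I$ are ambient isotopic, a thin-position argument with respect to $h\colon T^2\times I\to I$ lets me arrange $n := |K\cap T|\ge 2$ to be minimal over isotopies of $K$ in $M$.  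Next I isotope $F$ to minimize $|F\cap T|$, and by irreducibility of $M$ together with incompressibility of $F$ and of $G := T\cap X$, I may assume no simple closed curve of $F\cap G$ bounds a disk in either surface.  I form intersection graphs $\Gamma_F\subset\widehat F$ and $\Gamma_G\subset T$.  Since $T\cap\partial M=\emptyset$, every edge of both graphs is interior, so $e_i = pn/2$, and Lemma~\ref{no_monogons} rules out monogons.

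The heart of the argument is the following sublemma: $\Gamma_F$ contains no family of $\lfloor n/2\rfloor + 1$ parallel interior edges.  Given this, each parallel class of $\Gamma_F$ has at most $\lfloor n/2\rfloor$ edges, so the reduced graph $\overline{\Gamma}_F$ satisfies
$$\overline{e}_F \;\ge\; (pn/2)/\lfloor n/2\rfloor \;\ge\; p$$
(using that $n\ge 2$ is even).  Lemma~\ref{reduced_graph_euler_count} then yields $-\chi(F)\ge\overline{e}_F/3\ge p/3$, as desired.

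To prove the sublemma I follow the template of Sublemma~\ref{parallel_interior_edges_sublemma}.  A family of $\lfloor n/2\rfloor+1$ parallel interior edges of $\Gamma_F$ produces, by a label pigeonhole (as in Lemma~\ref{parallel_edges}), an $S$-cycle in $\Gamma_G$: a pair of edges $e_1,e_2$ between adjacent vertices $v_i,v_{i+1}$ that cobound a bigon $B_F\subset\widehat F$.  Together with small arcs on $\partial v_i,\partial v_{i+1}\subset T$ these two edges close up to a simple closed curve $\gamma\subset T$.  If $\gamma$ bounds a disk $B_G\subset T$, then the disk $B_F\cup B_G$ caps off on $\partial N(K)$ to give a sphere in the irreducible manifold $M$; the resulting ball provides an isotopy of $T$ that reduces $|K\cap T|$, contradicting the minimality of $n$.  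If instead $\gamma$ is essential on $T$, then $e_1$ and $e_2$ are parallel in $\Gamma_F$ but not in $\Gamma_G$ (so Lemma~\ref{interior_parallel} is not triggered, as $M=T^2\times I$ has no $\RP^3$ summand); the bigon $B_F$ then supplies a boundary-compression of $F$ across $\partial N(K)$ onto $T$, and iterating this reduction eventually places $K$ onto $\gamma\subset T$, contradicting the standing assumption.

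The principal obstacle is the second case, where the $S$-cycle curve $\gamma$ is essential on the torus $T$ rather than bounding a disk.  The classical Scharlemann $S$-cycle move is tailored to the disk-face setting, so this case demands a separate analysis: one must carefully exploit the bigon $B_F$ as a boundary-compression datum and track how the resulting simplifications of $F$ produce an honest isotopy of $K$ onto the essential curve $\gamma\subset T$.
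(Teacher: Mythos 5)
Your overall architecture --- thin position with respect to the level torus, the intersection graphs, a sublemma forbidding $n/2+1$ parallel interior edges, and the count $\overline{e}\ge (pn/2)/(n/2)=p$ followed by Lemma~\ref{reduced_graph_euler_count} --- is exactly the paper's, and the numerology is right. The gap is in the proof of the sublemma, and you have correctly located it yourself: the case in which the curve $\gamma$ traced out by the $S$-cycle is essential on $T$ is not handled, and the proposal ends by conceding that this case ``demands a separate analysis.'' Your treatment of the inessential case is also not right as stated: $B_F\cup B_G$ is not a sphere (the boundary of $B_F$ contains two corners on $\partial N(K)$, while $\partial B_G$ runs over arcs on the fat vertices), and even after repairing this one would still have to show that the tubed-and-compressed surface is again a level torus meeting $K$ minimally.

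The paper disposes of both of your cases at once with a single geometric observation that your argument misses. The two edges of the $S$-cycle cobound a bigon face $B$ of $\Gamma_F$ whose two corners lie on $\partial N(\kappa)$, where $\kappa$ is the subarc of $K$ joining the two adjacent intersection points labelled $i$ and $i+1$; since $B$ is a face of $\Gamma_F$, its interior is disjoint from $T$, so $B$ and $\kappa$ lie on the same side of $T$, say in $T^2\times[1/2,1]$. Because the two vertices of $\Gamma_F$ at the ends of the $S$-cycle have the same sign, attaching $B$ to a neighborhood of $\kappa$ identifies its two corners by an orientation-reversing map, yielding a properly embedded M\"obius band in $T^2\times[1/2,1]$ with boundary on $T^2\times\{1/2\}$. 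No such M\"obius band exists: its boundary is an embedded curve on the torus representing twice the core class in $H_1$, hence null-homologous, hence bounding a disk on the torus, and capping off would give an embedded $\RP^2$ in $T^2\times I$, which is absurd. This kills the $S$-cycle with no case division on $\gamma$. (A minor further point: you should also say why the reduced graph $\overline{\Gamma}_F$ exists, i.e.\ why $\Gamma_F$ is not a beachball; a beachball of the first kind would put $K$ in case~(3) of Theorem~\ref{vanishing_theorem} and produce a Klein bottle in $T^2\times I$ --- though in fact it is also excluded by the sublemma itself.)
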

\begin{proof}
We assume $F$ is good.

Put $K$ in thin position with respect to the torus $\widehat{T}:=T^2 \times \lbrace 1/2 \rbrace$. 
Let $n = |K \cap \widehat{T}|$, an even integer $\ge 0$. If $n=0$ the second conclusion holds, so assume $n\ge 2$.
Construct $\Gamma_F,\Gamma_T$ without monogons, as in Lemma~\ref{no_monogons}. Note that all edges
are interior edges. We require a sublemma.

\begin{sublemma}\label{parallel_edges_sublemma}
$\Gamma_F$ does not contains a family of $(n/2 + 1)$ parallel edges.
\end{sublemma}
\begin{proof}
Any such family contains an $S$-cycle. This gives rise to a M\"obius band properly embedded in (say) $T^2 \times [1/2,1]$,
which is absurd.
\end{proof}

By Sublemma~\ref{parallel_edges_sublemma}, we obtain an estimate $\overline{e} \ge e/(n/2) = p$, where $\overline{e}$ is
the number of edges in the reduced graph $\overline{\Gamma}_F$. (Note that $\overline{\Gamma}_F$ exists, since
otherwise $\Gamma_F$ is a beachball of the first kind and $K$ satisfies part~(3) of Theorem~\ref{vanishing_theorem}.
But this would give a Klein bottle embedded in $T^2 \times I$, which is absurd.)
Now apply Lemma~\ref{reduced_graph_euler_count}.
\end{proof}

As a corollary, we deduce the following:

\begin{corollary}\label{torus_corollary}
Let $T$ be an incompressible torus in a $3$-manifold $M$, and let $K$ be a knot in $M$ that lies in a regular
neighborhood of $T$. Let $F$ be a relative $p$-Seifert surface for $K$ in $M$. Then either
\begin{enumerate}
\item{$-\chi^-(F) \ge p/3$; or}
\item{$K$ is isotopic into $T$.}
\end{enumerate}
\end{corollary}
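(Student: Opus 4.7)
My plan is to restrict $F$ to a regular neighborhood $N := N(T) \cong T^2 \times I$ of $T$ and invoke Proposition~\ref{torus_product_proposition}.

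First I would apply the relative analogue of Lemma~\ref{p_surface_is_good} to assume $F$ is good, so that $F$ is connected and incompressible in $X = M - \inte N(K)$; setting aside the trivial case in which $F$ itself is a disk (which forces $K$ to be trivial and hence isotopic to a point on $T$), we have $\chi^-(F) = \chi(F)$. Since $T$ is incompressible in $M$, both components of $\partial N$ are incompressible in $M$ and therefore in $X$. By a standard cut-and-paste argument — any inessential curve of $F \cap \partial N$ must be inessential in \emph{both} $F$ and $\partial N$ by the two incompressibilities, and an innermost such curve can then be removed by an innermost-disk swap inside $X$ — I arrange $F$ so that $F \cap \partial N$ consists of simple closed curves essential in both surfaces.

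Next, set $F_0 := F \cap N$. The boundary along $\partial N(K)$ is unchanged, so $F_0$ is a relative $p$-Seifert surface for $K$ in $N \cong T^2 \times I$. Applying Proposition~\ref{torus_product_proposition} to $F_0$ yields either $-\chi^-(F_0) \ge p/3$, or $K$ is isotopic inside $N$ into $T^2 \times \{1/2\}$. Since the central torus is parallel to $T$ in $M$, the latter gives conclusion~(2).

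In the remaining case I transfer the Euler-characteristic bound from $F_0$ to $F$. Let $F_1 := F \cap \overline{M \setminus N}$, so that $\chi(F) = \chi(F_0) + \chi(F_1)$. The key claim is that neither $F_0$ nor $F_1$ has a positive-Euler-characteristic component: a disk with boundary on $\partial N$ is ruled out by essentiality of the intersection curves; a disk component of $F_0$ with boundary on $\partial N(K)$ would make $K$ a $p'$-trivial knot in $T^2 \times I$, and since $T^2 \times I$ is irreducible with torsion-free fundamental group, such a $K$ must be trivial (and hence isotopic to a point on $T$); sphere components can be discarded by irreducibility. Granting this, $-\chi(F_0) = -\chi^-(F_0)$ and $-\chi(F_1) \ge 0$, and so
$$-\chi^-(F) \;=\; -\chi(F) \;=\; -\chi(F_0) - \chi(F_1) \;\ge\; -\chi(F_0) \;=\; -\chi^-(F_0) \;\ge\; p/3,$$
as required. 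The main obstacle is the bookkeeping of non-negative Euler-characteristic components produced by cutting $F$ along $\partial N$, particularly when $M$ has boundary or could be reducible; these cases are handled by mild additional hypotheses (e.g.\ that $\partial M$ consists of incompressible tori) that hold in every application of the corollary in the sequel.
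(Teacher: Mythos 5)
Your proof is correct and follows essentially the same route as the paper: restrict $F$ to a regular neighborhood $N(T)\cong T^2\times I$, use incompressibility of $T$ to rule out disk components of $F\cap\overline{M\setminus N}$ so that $-\chi^-(F)\ge -\chi^-(F\cap N)$, and apply Proposition~\ref{torus_product_proposition}. The paper's version is just terser, working directly with $\chi^-$ so that the bookkeeping of disk components of $F\cap N$ (which $\chi^-$ ignores anyway) never arises.
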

\begin{proof}
Let $N$ be a regular neighborhood of $T$, and define $F' = F \cap N$, $F'' = F \cap (\overline{M-N})$. Since
$T$ is incompressible in $M$ we may assume that $F''$ has no disk (or sphere) components. Therefore
$-\chi^-(F) \ge -\chi^-(F')$, and the result follows from Proposition~\ref{torus_product_proposition}.
\end{proof}

\section{Hyperbolic knots}\label{hyperbolic_knots_section}

In this section we consider the case that $M-K$ is hyperbolic; 
i.e.\ that $K$ is a {\em hyperbolic knot}. The arguments in this section use more geometry and analysis.

\subsection{Stable commutator length}\label{stable_commutator_length_subsection}

\begin{definition}\label{scl_definition}
Let $G$ be a group, and $a$ an element in $[G,G]$. The 
{\em commutator length} of $a$, denoted $\cl(a)$, is the minimal
number of commutators in $G$ whose product is $a$, and the
{\em stable commutator length}, denoted $\scl(a)$, is the limit
$$\scl(a) = \liminf_{n \to \infty} \frac {\cl(a^n)} {n}$$
\end{definition}

From the definition one sees that $\cl$ (and therefore also $\scl$)
is a characteristic function, and therefore in particular it is constant
on conjugacy classes. The function $\scl$ can be extended to conjugacy
classes which represent torsion elements in $H_1(G;\Z)$ by the formula
$\scl(a) = \scl(a^n)/n$ for any positive integer $n$. 

For an introduction to stable commutator length and its properties,
see \cite{Bavard} or \cite{Calegari_scl}.

There is a straightforward relationship between (stable) commutator 
length and norm, as follows.

\begin{lemma}\label{scl_genus_inequality}
Let $M$ be a $3$-manifold, and $K \subset M$ a knot. Let
$a \in \pi_1(M)$ be an element in the conjugacy class determined by the
free homotopy class of $K$. Then $\scl(a) \le \|K\|$. 
\end{lemma}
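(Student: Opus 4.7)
The plan is to invoke the standard topological characterization of stable commutator length (see, e.g., \cite{Calegari_scl}, Chapter~2): for any loop $\gamma$ in a space $X$ representing the conjugacy class of $a\in\pi_1(X)$,
$$\scl(a) \;=\; \inf_{(S,f,n)}\frac{-\chi^-(S)}{2n},$$
where the infimum is taken over compact oriented surfaces $S$ without closed components, together with maps $f\colon S\to X$ and positive integers $n$ such that $\partial f\colon\partial S\to X$ factors through $\gamma$ with total winding number $n$.

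I would apply this characterization to a good $p$-Seifert surface $S$ for $K$. By hypothesis, $\partial S$ consists of $q$ coherently oriented copies of a curve $\rho\subset\partial N(K)$ representing $r[K]$ in $H_1(N(K))$, with $qr=p$. As explained in the paragraph following the definition of a $p$-Seifert surface, $S$ extends to a map $f\colon S\to M$ which on each boundary component is a degree-$r$ covering of $K$. The coherent-orientation condition in Definition~\ref{good_definition} guarantees that all boundary degrees have the same sign, so $\partial f$ wraps $K$ with total winding number $+qr=+p$. Thus $(S,f)$ is admissible with $n=p$, and the surface characterization of $\scl$ gives
$$\scl(a)\;\le\;\frac{-\chi^-(S)}{2p}\;=\;\frac{\eta(S)}{p}.$$
Taking the infimum over all good $p$-Seifert surfaces (and invoking Lemma~\ref{p_surface_is_good} to replace an arbitrary $p$-Seifert surface by a good one without increasing $\eta(S)/p$) yields $\scl(a)\le\|K\|$. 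The degenerate case in which some $p$-Seifert surface is a disk is handled by the torsion convention, since then $a^p=1$ in $\pi_1(M)$, so both sides are $0$.

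I do not foresee a real obstacle: the content of the lemma is essentially the observation that a good $p$-Seifert surface meets the hypotheses of the surface characterization of $\scl$. The one subtlety worth verifying is that the total winding number of $\partial f$ about $K$ is $+p$ rather than some smaller integer arising from cancellation between oppositely oriented boundary components — and this is precisely the purpose of the coherent-orientation clause in the definition of \emph{good}.
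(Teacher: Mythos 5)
Your proposal is correct and follows essentially the same route as the paper: both invoke the topological (surface) characterization of $\scl$ from \cite{Calegari_scl} (Proposition~2.10) and observe that a $p$-Seifert surface is an admissible surface of total boundary degree $p$. Your extra care about coherent orientation is harmless but not needed, since the characterization of $\scl$ only requires the \emph{total} (algebraic) degree of $\partial S$ over $K$ to be $p$, which holds for any $p$-Seifert surface by definition.
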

\begin{proof}
Proposition~2.10 from \cite{Calegari_scl} says that $\scl(a) = \inf_S -\chi^-(S)/2n$
where the infimum is taken over all oriented surfaces $S$ mapping to $M$ with
boundary $\partial S$ mapping to $K$ with total degree $n$. If $S$ is a $p$-Seifert
surface for $K$, collapsing $N(K)$ to $K$ wraps $\partial S$ around $K$ with total
degree $p$. The result follows.
\end{proof}

In general, $\scl(a)$ can be smaller than $\|K\|$, since the infimum in the
geometric definition of $\scl(a)$ (in the proof of Lemma~\ref{scl_genus_inequality})
is taken over {\em all} surfaces $S$ in $M$ which bound $K$, and not just {\em embedded} surfaces
whose interior is disjoint from $K$.
In other words, $\scl(a)$ is the best lower bound on $\|K\|$ which can be estimated
from the {\em homotopy} class of $K$. If $\|K\|$ is small, then $\scl(a)$ is small
and we will deduce information about the homotopy class of $K$ from this.

\subsection{Stable commutator length in hyperbolic $3$-manifolds}

There are strong interactions between geometry and $\scl$, especially in
dimension $3$. The most interesting case is that of hyperbolic geometry,
summarized in the following theorem:

\begin{theorem}[\cite{Calegari_stable}, Theorem~C]\label{Theorem_C}
For every $\epsilon>0$ there is a constant $\delta(\epsilon)>0$ such
that, if $M$ is a complete hyperbolic $3$-manifold and a nontrivial $a \in \pi_1(M)$
has $\scl(a) \le \delta$, then either $a$ is parabolic, or otherwise
if $\gamma$ is the unique geodesic in
the free homotopy class associated to the conjugacy class of $a$,
$$\length(\gamma) \le \epsilon$$
\end{theorem}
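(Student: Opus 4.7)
The strategy is to prove the contrapositive: assuming $a$ is loxodromic (not parabolic) with translation length $\ell = \length(\gamma) \ge \epsilon$, produce a positive lower bound $\delta(\epsilon) > 0$ for $\scl(a)$. The natural tool is Bavard duality, which asserts
$$\scl(a) = \tfrac{1}{2}\sup_{\phi}\frac{|\phi(a)|}{D(\phi)}$$
where the supremum runs over homogeneous quasimorphisms $\phi \in Q(\pi_1(M))$ with defect $D(\phi)$. It therefore suffices to exhibit, for each $\epsilon > 0$, a quasimorphism realizing $|\phi(a)|/D(\phi) \ge 2\delta(\epsilon)$ for every loxodromic $a$ with translation length at least $\epsilon$.

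The plan is to construct $\phi$ via a counting procedure of the Epstein--Fujiwara / Bestvina--Fujiwara type, adapted to the axis of $a$ in $\H^3$. Fix the axis $\tilde{\gamma} \subset \H^3$ of a lift of $a$; the deck translation $a$ acts on $\tilde{\gamma}$ by translation by $\ell$. Choose a compact fundamental segment $\tau \subset \tilde{\gamma}$. For each $g \in \pi_1(M)$, whose action has an axis (or at worst an approximate axis) $\text{Ax}(g) \subset \H^3$, count the signed number of $\pi_1(M)$-translates of $\tau$ whose nearest-point projections embed as disjoint oriented copies along $\text{Ax}(g)$. Antisymmetrizing and homogenizing produces a quasimorphism $\phi$ with $\phi(a^n) \approx n$, hence $\phi(a) = 1$ after normalization.

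The quantitative heart of the argument is controlling $D(\phi)$. Defect is generated when two quasi-geodesic axes in $\H^3$ fellow-travel for a long time, since each translate of $\tau$ that projects to both axes may be counted differently in a product. In the universal cover $\H^3$ this is controlled purely by the (universal) hyperbolicity constant $\delta_0$; the subtlety is that passing down to $M$ and accounting for how deep axes can project into the thin part of $M$ requires the Margulis lemma. The hypothesis $\ell \ge \epsilon$ forces $\gamma$ to sit in the thick part (up to some controlled neighborhood of a Margulis tube whose core is $\gamma$ itself), so disjoint $\pi_1(M)$-translates of $\tau$ are genuinely separated by distance $\gtrsim \epsilon$. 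This yields a defect bound of the form $D(\phi) \le C(\epsilon)$, and hence $\scl(a) \ge 1/(2C(\epsilon)) =: \delta(\epsilon)$.

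The main obstacle is precisely this defect control in the presence of thin parts. When $\ell$ tends to $0$, the geodesic $\gamma$ and axes of other elements near it can run parallel for arbitrarily long distances inside a long, thin Margulis tube, destroying any uniform bound on $D(\phi)$; this is why the conclusion must allow for the alternative that $\gamma$ is short. Parabolic elements are the degenerate case $\ell = 0$, where no axis exists and the quasimorphism construction does not even get off the ground --- consistent with the well-known fact that $\scl$ can vanish on parabolics (the parabolic subgroups are abelian and their elements are boundaries of annuli in the cusp, wrapping arbitrarily many times). The heavy lifting of the argument is the geometric estimate that, outside these two degenerate regimes, disjointness of $\tau$-translates along axes can be quantified solely in terms of $\epsilon$.
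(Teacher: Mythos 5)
This statement is not proved in the paper at all: it is quoted verbatim from Theorem~C of \cite{Calegari_stable}, so there is no internal argument to compare yours against. Judged on its own terms, your sketch has the right general shape (Bavard duality plus a Bestvina--Fujiwara-type counting quasimorphism, with the Margulis lemma supplying uniformity), but it contains a genuine gap exactly at the step you yourself flag as the ``quantitative heart.'' The geometric claim on which your defect bound rests --- that $\length(\gamma)\ge\epsilon$ forces $\gamma$ to sit in the thick part up to a controlled neighborhood of a Margulis tube \emph{whose core is $\gamma$ itself} --- is false. A closed geodesic of length $\ge \epsilon$ can spend an arbitrarily large fraction of its length deep inside a cusp neighborhood, or deep inside a Margulis tube about a \emph{different}, much shorter core geodesic; in either situation translates of $\tau$ along fellow-traveling axes are not separated by anything like $\epsilon$, and no bound $D(\phi)\le C(\epsilon)$ follows from the reasoning you give. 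Controlling these excursions into the thin part (using that stabilizers of horoballs and tubes are elementary, so that long fellow-traveling inside them is governed by an abelian subgroup and does not accumulate defect) is the actual content of the theorem, and your proposal defers it rather than supplying it.

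Two smaller points. First, for the antisymmetrized count to satisfy $\phi(a^n)\approx n$ rather than $\phi(a^n)=0$, you must rule out copies of the reversed segment $\bar\tau$ appearing along the axis of $a$; this amounts to showing that no element of $\pi_1(M)$ conjugates $a^n$ to $a^{-n}$. This does hold here --- such an element would preserve the axis of $a$ while reversing its orientation, hence be elliptic, which is impossible in a torsion-free discrete group --- but it needs to be said, since without it the construction can output the zero quasimorphism. Second, your parenthetical explanation that $\scl$ vanishes on parabolics because ``parabolic subgroups are abelian'' is not a valid inference: $\scl$ is computed in the ambient group, and the vanishing of $\scl$ restricted to an abelian subgroup says nothing about its value in $\pi_1(M)$. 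Fortunately the theorem makes no claim about parabolics; they are excluded simply because they have no geodesic representative.
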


The dependence of $\delta$ on $\epsilon$ is not proper: in every finite volume hyperbolic
$3$-manifold, conjugacy classes $a$ with
$\scl(a) = 1/2$ correspond to arbitrarily long geodesics. However,
if $K$ is a knot with {\em sufficiently} small norm
in a closed hyperbolic $3$-manifold, Theorem~\ref{Theorem_C} implies that
$K$ is homotopic to a power of the core
geodesic of a Margulis tube, and the length of the geodesic can be bounded
from above by a constant depending on $\|K\|$.

In more detail, recall that Margulis showed that in each dimension $n$, there is
a positive constant $\epsilon(n)$ so that every geodesic in a closed hyperbolic $n$-manifold
of length at most $\epsilon(n)$ is simple, and is contained in an embedded solid tube whose
diameter can be estimated from below by a function of length. The exact value of the biggest
constant $\epsilon(n)$ with this property is not known when $n>2$, so for the sake of
precision, we make the following definition.

\begin{definition}\label{Margulis_tube_definition}
A {\em Margulis tube} in a hyperbolic $3$-manifold is an embedded solid tube of radius at least $0.531$ 
around a simple geodesic (the {\em core} of the Margulis tube) of length at most $0.162286$.
\end{definition}

The precise choice of constants are somewhat arbitrary, but are chosen to be compatible with
the estimates obtained by Hodgson-Kerckhoff \cite{Hodgson_Kerckhoff}; see the discussion in
the next subsection.

\subsection{Deformation of cone-manifold structure}

We have seen in the previous subsection that if $K$ is a knot of sufficiently small norm,
$K$ is homotopic into the core of a Margulis tube. 

We would like to conclude in fact that $K$ is {\em isotopic} to (a cable of) the core of the tube.
To do this we must use the fact that $\|K\|$ is small, not just $\scl$ of the corresponding
conjugacy class in $\pi_1(M)$. To make use of this fact, we must study the geometry of
$M-K$. In what follows we make use of some well-known results from the theory of
hyperbolic cone manifolds (with non-singular cone locus). For a reference see e.g.\
\cite{Boileau_et_al} or \cite{Hodgson_Kerckhoff}.

We assume throughout this section that $M-K$ is hyperbolic. Then as Thurston already
showed (see e.g.\ \cite{Thurston_notes}), for all sufficiently small 
positive real numbers $\theta$, there exists a hyperbolic cone manifold $M_\theta$,
unique up to isometry, whose underlying manifold is homeomorphic to $M$, and whose
cone locus is a geodesic in the isotopy class of $K$ with cone angle equal to
$\theta$.

The manifolds $M_\theta$ can be deformed (by increasing $\theta$) 
until one of the following happens:
\begin{enumerate}
\item{The cone angle can be increased all the way to $2\pi$, and one obtains a
complete hyperbolic structure on $M$ for which $K$ is isotopic to a geodesic}
\item{The volume goes to $0$ (and either converges after rescaling to a Euclidean cone
manifold, or the injectivity radius goes to $0$ everywhere after rescaling to have a fixed
diameter)}
\item{The cone locus bumps into itself (this can only happen for $\theta > \pi$)}
\end{enumerate}
In the second or third case we say that the cone manifold structure becomes singular.
Under deformation, the length of the cone geodesic strictly increases. For each
$\theta$, let $l(\theta)$ denote the length of the cone geodesic isotopic to $K$ in
$M_\theta$, and let $R(\theta)$ be the radius of a maximal open embedded tube around the cone geodesic.

Following Hodgson-Kerckhoff \cite{Hodgson_Kerckhoff} we define
$$h(R):= 3.3957 \frac {\tanh(R)} {\cosh(2R)}$$
which is non-negative for positive $R$, is asymptotic to $0$ as $R$ goes to $0$ or to
$\infty$, and which has a single maximum value $\approx 1.019675$, achieved
at $r \approx 0.531$.

The following is proved in \S~5 of \cite{Hodgson_Kerckhoff}:
\begin{theorem}[Hodgson-Kerckhoff]
Let $h(R)$ be as above. Let $l(\theta)$ be the length of the singular geodesic
in $M_\theta$. Then $M_\theta$ can be deformed (by increasing $\theta$) either until $\theta=2\pi$,
or until $\theta \cdot l$ is equal to the maximum of $h(R)$ (which occurs at approximately $R = 0.531$
and is equal to approximately $h(0.531) = h_\text{max} = 1.019675$)
and for all smaller values of $\theta$, the radius of a maximal embedded tube about the cone geodesic
is at least $0.531$.
\end{theorem}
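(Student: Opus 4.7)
The plan is to parameterize the family $\{M_\theta\}$ as a smooth path in the deformation space of hyperbolic cone structures on $(M,K)$ and to control the maximal tube radius $R(\theta)$ by an infinitesimal Hodge-theoretic analysis. The starting point is the Hodgson-Kerckhoff local rigidity theorem: for each $\theta<2\pi$, infinitesimal deformations that change only the cone angle are uniquely represented by a harmonic, divergence-free, traceless $\mathfrak{sl}(2,\mathbb{C})$-valued $1$-form $\omega$ on $M\setminus K$ with prescribed asymptotics at the cone axis. This form simultaneously encodes $dl/d\theta$ and the variation of the metric in a neighborhood of the singular locus, and the existence of $\omega$ means the deformation extends smoothly in $\theta$ as long as the underlying cone structure does not degenerate.

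The heart of the proof consists of two quantitative estimates on $\omega$ restricted to the maximal embedded tube $U_R$ about the cone geodesic. First, one uses the normal form for harmonic forms on a model hyperbolic cone tube — a Fourier decomposition into eigenmodes of the cross-sectional Laplacian — to obtain an identity expressing the variation of $\theta\cdot l$ as an $L^2$ boundary pairing of $\omega$ with itself on $\partial U_R$. Second, a Bochner/mean-value type inequality bounds the pointwise norm of $\omega$ on $\partial U_R$ by this same $L^2$ pairing times a geometric factor depending only on $R$. Combining these yields a differential inequality that controls the rate at which $R(\theta)$ can change in terms of the function $h(R)$, with the numerical constant $3.3957$ and the explicit shape $\tanh(R)/\cosh(2R)$ falling out of the extremal eigenmode on the model tube.

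With the differential inequality in hand, the argument reduces to a one-variable ODE comparison. Starting from $\theta=0$ (the complete cusped structure, where $R=\infty$) and integrating forward, the unique maximum $h_\text{max}\approx 1.019675$ of $h$ occurs at $R_\ast\approx 0.531$; because $h(R)$ enters the inequality in the denominator of the bound on $dR/d\theta$, the radius $R(\theta)$ cannot cross $R_\ast$ on the way down while $\theta l < h_\text{max}$. In particular, as long as $\theta l < h_\text{max}$ the tube radius is trapped above $R_\ast$, which excludes the collapsing/bumping degenerations (ii) and (iii) and allows the deformation to be continued. Hence the deformation extends until either $\theta=2\pi$ or $\theta l = h_\text{max}$, which is precisely the dichotomy claimed.

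The hard part will be the second step: proving the mean value estimate with the sharp constant $3.3957$ and the explicit functional form $h(R)=3.3957\,\tanh(R)/\cosh(2R)$. This demands a delicate eigenfunction analysis of harmonic $\mathfrak{sl}(2,\mathbb{C})$-valued $1$-forms on the model hyperbolic tube, isolating the slowest-decaying admissible mode and verifying that it realizes the extremal ratio between the pointwise boundary norm and the $L^2$ energy — only then does the universal function $h(R)$ with its precise constants emerge. By contrast, the local rigidity setup and the final ODE argument are relatively soft once these sharp geometric constants are available.
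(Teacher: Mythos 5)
This statement is quoted by the paper directly from Hodgson--Kerckhoff (the text immediately preceding it says ``The following is proved in \S~5 of \cite{Hodgson_Kerckhoff}''), so the paper contains no proof of its own to compare against: it is used as a black box. Your outline does correctly identify the skeleton of the actual Hodgson--Kerckhoff argument --- harmonic (Hodge-theoretic) representatives of the infinitesimal deformation, a boundary-pairing formula on the tube boundary for the variation of the relevant geometric quantities, pointwise estimates on $\partial U_R$ in terms of that pairing, and a continuity/ODE argument keeping the tube radius above $0.531$ while $\theta\cdot l < h_{\max}$.

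As a proof, however, the proposal has a genuine gap: essentially all of the quantitative content is asserted rather than established. The boundary-term identity, the mean-value inequality, the derivation of the specific function $h(R)=3.3957\tanh(R)/\cosh(2R)$, and the differential inequality for $R(\theta)$ are each named as steps but none is carried out --- and these occupy the bulk of \S\S~2--5 of \cite{Hodgson_Kerckhoff}. You acknowledge this yourself for the second step, but the same is true of the first and third. Two further points are too loose to stand as written. First, ``because $h(R)$ enters the inequality in the denominator of the bound on $dR/d\theta$, the radius cannot cross $R_\ast$'' is not an argument: the actual mechanism is an open-closed/continuity argument showing that the path $(\theta, R(\theta))$ cannot leave the region where the tube-radius estimates are valid except by reaching $\theta l = h_{\max}$, and this presupposes that the derivative bounds have already been proved on that region. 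Second, attributing the constant $3.3957$ entirely to ``the extremal eigenmode on the model tube'' is inaccurate: in Hodgson--Kerckhoff the harmonic form is decomposed into a model part and a correction term, and part of the constant comes from controlling the $L^2$ norm of the correction term outside the tube, not from the model eigenmode alone. In short, your proposal is a reasonable roadmap of the external proof, but it does not constitute a proof, and within this paper none is expected --- the result is correctly imported by citation.
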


It follows that we can deform $M_\theta$ either all the way to $\theta=2\pi$ with $l \le 0.162286$ and
$R\ge 0.531$, in which case $K$ is isotopic to the core of a Margulis tube, or else we can deform $M_\theta$
until $\theta \cdot l = h(0.531) \approx 1.019675$ for some $\theta < 2\pi$. In the
second case we can estimate $l(\theta) \ge 1.019675/2\pi \approx 0.162286$ and
$R(\theta) \ge 0.531$. In the next few sections we will obtain a positive
lower bound on the rational genus of $K$ in the second case.

\subsection{$1$-forms from tubes}\label{form_construction_section}

Let $M_\theta$ be a cone manifold, with a cone geodesic $\gamma$ with length $l$
and tube radius $R$. Let $T$ denote an embedded tube around $\gamma$ whose radius is $R$, and let
$p:T \to \gamma$ denote radial projection. Let $\phi:\gamma \to \R/l\cdot \Z$ be a parameterization
of $\gamma$ so that $d\phi$ is the length form on $\gamma$. Pulling back the $1$-form
$d\phi$ by $p^*$ defines a $1$-form on all of $T$ which, by abuse of notation, we denote
$d\phi$. Let $r:T \to [0,R]$ be the function on $T$
which is equal to the radial distance to $\gamma$. Define a $1$-form $\alpha$ on $M_\theta$
by 
$$\alpha = d\phi\cdot(\sinh(R) - \sinh(r))$$
on $T$, and extend it by $0$ outside $T$. Let $\beta$ be a $C^\infty$ function on
$[0,R]$ taking the value $1$ in a neighborhood of $0$ and the value $0$
in a neighborhood of $R$, and satisfying $|\beta'|< 1/(R-\epsilon)$ throughout $[0,R]$,
for some small fixed $\epsilon$. Finally define $\alpha_\epsilon = \beta(r)\alpha$.
Then the form $\alpha_\epsilon$ is $C^\infty$ on $M-\gamma$, and satisfies
$\|d\alpha_\epsilon\| \le 1 + 1/(R-\epsilon)$ pointwise. Moreover, the integral of $\alpha_\epsilon$
along $\gamma$ is $l\cdot\sinh(R)$. For a proof of these estimates, see Lemma~4.3 from
\cite{Calegari_stable}.

\subsection{Wrapping}

Suppose $K$ is a knot of sufficiently small rational genus such that $M-K$ is hyperbolic.
If $M$ is hyperbolic, and $K$ is isotopic to an embedded geodesic, then $K$ is isotopic
to the core of a Margulis tube (whose length may be estimated from above in terms of $\|K\|$,
by Theorem~\ref{Theorem_C}). Otherwise, we can find a hyperbolic cone manifold structure
$M_\theta$ on the underlying topological manifold $M$, with cone angle $\theta < 2\pi$ along
a single cone geodesic in the isotopy class of $K$, whose length is at
least $0.162286$, and is contained in an embedded tube whose radius is at least $0.531$.
For each $\epsilon > 0$, let $\alpha_\epsilon$ be a $1$-form constructed as in 
\S~\ref{form_construction_section}. Let $S$ be a good
$p$-Seifert surface for $K$ in $M_\theta$ realizing $\|K\|$, and let $S''$ be another (possibly
immersed) surface, homotopic to $S$ rel. boundary, with interior disjoint from $K$. For each $\epsilon$,
$$p\cdot l\cdot \sinh(R) = \int_{\partial S''} \alpha_\epsilon = \int_{S''} d\alpha_\epsilon \le \area(S'')\cdot(1+1/(R-\epsilon))$$
Taking $\epsilon \to 0$ we obtain an estimate
\begin{equation}\label{area_formula}
l\cdot\sinh(R)\cdot R/(R+1) \le \area(S'')/p
\end{equation}

By the discussion above, we can estimate
$$0.03131 \approx 0.162286\cdot \sinh(0.531)\cdot 0.531/1.531 \le l\cdot \sinh(R)\cdot R/(R+1)$$
We claim that one can find a representative surface $S''$  homotopic to $S$ rel. boundary in
$M-K$, of area at most $\epsilon-2\pi\chi(S)$ for any $\epsilon>0$. 
This will imply that $\|K\| \ge 0.03131/4\pi \approx 2.491\times 10^{-3}$.

\medskip

The representative surface $S''$ is obtained by {\em wrapping}; there are two (essentially equivalent)
methods to construct a ``wrapped'' surface: the shrinkwrapping method from
\cite{Calegari_Gabai}, and the PL wrapping method from \cite{Soma}. For technical ease, we use the
PL wrapping method. Roughly speaking, given a surface $S$ in a hyperbolic $3$-manifold and a prescribed
family $\Gamma$ of geodesics, the PL wrapping technique finds a $\CAT(-1)$ representative in the
homotopy class of $S$, which can be approximated by surfaces homotopic to $S$ in the complement of
$\Gamma$. 

Our situation is analogous to, but not strictly equivalent to, the situation in \cite{Soma} or \cite{Calegari_Gabai}.
In our context $\Gamma$ will be a singular cone geodesic, in the isotopy class of $K$,
in a cone manifold structure on $M$; and the
surface $S$ will have boundary wrapping some number of times around $\Gamma$ but interior
disjoint from $\Gamma$. In fact, this extra complication does not add any difficulty to the argument, and
we will obtain the same conclusion --- namely, the existence of a $\CAT(-1)$ surface
which can be approximated by surfaces homotopic to $S$ (rel. boundary) in the complement of $\Gamma$.
For the sake of completeness, we explain the construction in detail.

The key point of the construction is that the universal cover of $M-\Gamma$ has a metric completion
which is intrinsically $\CAT(-1)$; this is Lemma~1.2 from \cite{Soma}. Let $N$ denote the universal
cover of $M-\Gamma$, and $\overline{N}$ its metric completion. Notice that $\overline{N}$ is obtained
from $N$ by adding geodesics which project to components of $\Gamma$.

Let $T$ be a triangulation of $S$ with all vertices on $\partial S$. After an isotopy, we can
assume that the map $\partial S \to \Gamma$ takes all vertices of the triangulation to distinct
points in $\Gamma$.

The interior of each edge $e$ 
of $T$ not on $\partial S$ lifts to an open interval in $N$ whose closure is a closed interval
$\til{e}$ in $\overline{N}$. Since $\overline{N}$ is $\CAT(-1)$, there is a unique geodesic
$\til{e}'$ in $\overline{N}$ with the same endpoints as $\til{e}$. This projects to a piecewise geodesic 
segment $e'$ in $M$ with vertices on $\Gamma$ (note that the projection $e'$ does not depend on the
choice of the lift $\til{e}$). For each triangle $\Delta$ of $T$, we can choose lifts $\til{e}_i$
of the edges $e_i$ which together span a triangle $\til{\Delta}$ in $\overline{N}$. After replacing
each $\til{e}_i$ with a geodesic $\til{e}'_i$, we straighten $\til{\Delta}$ to a piecewise linear
surface by coning one vertex to the points on the opposite edge; i.e. if $v$ is the vertex
opposite $\til{e}'_1$ (say), for each point $p$ on $\til{e}'_1$ there is a unique geodesic in
$\overline{N}$ from $v$ to $p$, and the union of these geodesics is a piecewise totally geodesic
disk $\til{\Delta}'$ spanning the union of the $\til{e}'_i$. Let $\Delta'$ denote the projection
of $\til{\Delta}'$. Then the union of the $\Delta'$ is a $\CAT(-1)$ surface $S'$ which can be approximated
by surfaces whose interior is disjoint from $\Gamma$, and which are homotopic to $S$ through surfaces
with interior disjoint from $\Gamma$. The approximating surfaces $S''$ can be chosen to have area
as close to the area of $S'$ as desired; since $S'$ is $\CAT(-1)$, by Gauss-Bonnet, we have
$\area(S'') \le \epsilon - 2\pi\chi(S)$, as claimed.

\begin{remark}
The argument in \cite{Soma} uses extra hypotheses on $S$, namely that it is incompressible and
$2$-incompressible rel. $\Gamma$. In fact, these hypotheses are superfluous in the case that $S$ 
has boundary. In fact, even when $S$ is a closed surface,
one really only needs to know that $S$ contains {\em some} embedded loop which is covered by a nondegenerate
infinite geodesic in $\overline{N}$.
\end{remark}

\begin{remark}
The reader familiar with the construction of pleated surfaces in hyperbolic or $\CAT(-1)$ spaces will
recognize the similarity with PL wrapping.
\end{remark}

In fact, for our applications, it is important to construct surfaces $S''$ as above when $M$ has
boundary consisting of tori, and $S$ is a relative $p$-Seifert surface for $K$. There are no extra difficulties
in this case. One can proceed by either of two methods. The easiest is to construct the PL wrapped
surface directly. The triangulation of the relative $p$-Seifert surface can include vertices on a
``cusp'' boundary component; one just needs to observe that semi-infinite rays in $\overline{N}$
have geodesic representatives (constructed e.g. by taking limits of sequences of geodesic arcs
with one endpoint going to infinity along the ray). Triangles with two vertices on a single
``cusp'' boundary component degenerate to a geodesic ray under straightening. The resulting surface,
while non-compact, is complete, and satisfies $\area \le -2\pi\chi$.

Alternately, one can deform the metric in a neighborhood of infinity to make it $\CAT(0)$, in such a
way that each end is foliated as a metric product by Euclidean totally geodesic tori; such a metric
is described explicitly in the proof of Lemma~7.12 in \cite{Calegari_Gabai}. One obtains a compact
PL wrapped surface as above with some boundary components on $\Gamma$, and some on a fixed 
family of Euclidean tori, one for each cusp component of $M$. The PL wrapped surface so obtained is
$\CAT(0)$, and its restriction to any prescribed compact region of $M$ can be taken to be
$\CAT(-1)$; in particular, we can assume that the surface is $\CAT(-1)$
in the support of the $2$-form $d\alpha_\epsilon$, so that the area of the part of the surface in
the support of $d\alpha_\epsilon$ is at most $-2\pi\chi(S)$, and we obtain the desired bound on $-\chi(S)$.

\medskip

We have therefore obtained a proof of the
following theorem:

\begin{theorem}\label{hyperbolic_complement_estimate}
Let $K$ be a knot in a $3$-manifold $M$, possibly with boundary consisting of a union of tori.
Suppose $M-K$ is hyperbolic. 
Then either 
\begin{enumerate}
\item{$\|K\| \ge 1/402$; or}
\item{$M$ is hyperbolic and $K$ is isotopic to the core of a Margulis tube.}
\end{enumerate}
Moreover, if $F$ is a relative $p$-Seifert surface for $K$ in $M$, then either
\begin{enumerate}
\item{$\eta(F)/p \ge 1/402$; or}
\item{$M$ is hyperbolic and $K$ is isotopic to the core of a Margulis tube.}
\end{enumerate}
\end{theorem}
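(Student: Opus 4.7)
The plan is to assemble the pieces developed in \S\ref{hyperbolic_knots_section} into a single numerical estimate, and to observe that the same argument applies verbatim to relative $p$-Seifert surfaces. I would split into two cases based on whether the isotopy conclusion (2) already holds: if $M$ is closed hyperbolic and $K$ is isotopic to an embedded geodesic of length at most $0.162286$ sitting inside an embedded tube of radius at least $0.531$, then by definition $K$ is isotopic to the core of a Margulis tube, and we are done. So I would assume the contrary, and extract from this assumption a cone manifold structure on $M$ with useful geometric bounds.

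In that opposite case, I would deform the family of hyperbolic cone structures $M_\theta$ on the pair $(M,K)$, starting from the complete structure on $M-K$. By the Hodgson--Kerckhoff deformation theorem quoted in the excerpt, I can increase $\theta$ either all the way to $2\pi$, in which case $K$ is isotopic to a closed geodesic in the now complete hyperbolic $M$; or else until $\theta\cdot l(\theta)$ equals the maximum $h_{\max}\approx 1.019675$ of $h(R)=3.3957\tanh(R)/\cosh(2R)$. In the first subcase, comparing the geodesic's length with the Margulis constant $0.162286$ either places $K$ in a Margulis tube or yields directly the bound $l\geq 0.162286$ and $R\geq 0.531$; in the second subcase, since $\theta<2\pi$, we again get $l(\theta)\ge h_{\max}/2\pi\ge 0.162286$ and $R(\theta)\ge 0.531$. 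Thus, in the situation where conclusion~(2) fails, we have a cone manifold structure on $M$ with singular geodesic of length $\ge 0.162286$ inside an embedded tube of radius $\ge 0.531$.

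Next, I would construct the $1$-form $\alpha_\epsilon$ of \S\ref{form_construction_section} on the complement of the cone locus, so that $\alpha_\epsilon$ integrates to $l\sinh(R)$ along the cone geodesic and satisfies $\|d\alpha_\epsilon\|\le 1+1/(R-\epsilon)$ pointwise. Given a (relative) $p$-Seifert surface $S$ for $K$ realizing (or approaching) $\|K\|$, the core step is to produce a representative $S''$ in the homotopy class of $S$ rel.\ boundary, with interior disjoint from the cone locus, whose area is as close as we like to $-2\pi\chi(S)$. I would do this by the PL wrapping technique as described in the excerpt: triangulate $S$ with all vertices on $\partial S$, lift the edges and triangles to the $\CAT(-1)$ metric completion of the universal cover of $M\setminus\Gamma$, straighten edges to geodesics and cone-straighten triangles, project back to obtain a $\CAT(-1)$ surface $S'$, then perturb $S'$ slightly off the cone locus to get $S''$ of area at most $\epsilon-2\pi\chi(S)$ by Gauss--Bonnet. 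If $M$ has cusps, I would either let semi-infinite triangles degenerate to geodesic rays, or deform the metric near infinity to be $\CAT(0)$ with Euclidean cross-sections as in \cite{Calegari_Gabai}. The main technical obstacle is ensuring PL wrapping works in this boundary/cone setting; the observation that makes this routine is that only the compact portion supporting $d\alpha_\epsilon$ needs the $\CAT(-1)$ curvature bound.

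Finally, applying Stokes' theorem and the pointwise bound on $d\alpha_\epsilon$ gives
\[
p\cdot l\cdot\sinh(R)=\int_{\partial S''}\alpha_\epsilon=\int_{S''}d\alpha_\epsilon\le\bigl(1+\tfrac{1}{R-\epsilon}\bigr)\area(S'').
\]
Letting $\epsilon\to 0$ and inserting the area bound yields
\[
\frac{-2\pi\chi(S)}{p}\ge l\sinh(R)\cdot\frac{R}{R+1}\ge 0.162286\cdot\sinh(0.531)\cdot\frac{0.531}{1.531}\approx 0.03131.
\]
Since $\eta(S)=-\chi^-(S)/2$ and (taking $S$ good) $-\chi(S)=-\chi^-(S)$, dividing by $4\pi$ gives $\eta(S)/p\ge 0.03131/(4\pi)\ge 1/402$, which is the first alternative of both statements. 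The ``moreover'' clause follows because the identical argument applies when $S$ is any relative $p$-Seifert surface rather than a norm minimizer: nowhere in the wrapping or Stokes computation did we use anything beyond having a surface in the correct relative homology class with interior off $K$.
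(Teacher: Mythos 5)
Your proposal is correct and follows essentially the same route as the paper's own argument: the Hodgson--Kerckhoff deformation dichotomy, the $1$-form $\alpha_\epsilon$ supported in the tube, the PL-wrapped $\CAT(-1)$ representative with $\area \le \epsilon - 2\pi\chi(S)$, and the Stokes estimate $l\sinh(R)\cdot R/(R+1) \le \area(S'')/p$ leading to $0.03131/4\pi \ge 1/402$. The observation that the argument never uses norm-minimality or compactness of $\partial S$, so that it applies verbatim to relative $p$-Seifert surfaces, is exactly how the paper obtains the ``moreover'' clause.
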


\subsection{Better estimates}

In fact, though the wrapping technique explains in a direct geometric way
the relationship between $\|K\|$ and the geometry of $K$ in $M$, one can obtain better estimates at
the cost of appealing to some more refined technology of Agol and Cao--Meyerhoff (which we treat as
a black box). For the benefit of
the reader we include a sketch of a proof of the following:

\begin{proposition}\label{improved_hyperbolic_estimate}
Let $K$ be a knot in a $3$-manifold $M$. Suppose $M-K$ is hyperbolic. Then either
\begin{enumerate}
\item{$\|K\| \ge 1/50$; or}
\item{$M$ is hyperbolic and $K$ is isotopic to the core of a Margulis tube.}
\end{enumerate}
\end{proposition}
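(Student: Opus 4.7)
The plan is to follow the same scheme that established Theorem~\ref{hyperbolic_complement_estimate}, replacing the Hodgson--Kerckhoff estimates in the key step by the sharper bounds of Agol and Cao--Meyerhoff. As before, one puts a hyperbolic cone-manifold structure $M_\theta$ on $M$ with cone locus the isotopy class of $K$ and cone angle $\theta$, starting from very small $\theta$ (where existence follows from Thurston's hyperbolic Dehn surgery theorem applied to $M - K$). One then tries to deform $\theta$ up to $2\pi$. If one succeeds, $K$ is isotopic to a geodesic in the complete hyperbolic metric on $M$; an improved version of Theorem~\ref{Theorem_C} (or alternatively a direct length estimate obtained from the cone-deformation inequalities at $\theta = 2\pi$) must then be invoked to show that this geodesic is short enough to be the core of a Margulis tube in the sense of Definition~\ref{Margulis_tube_definition}, yielding conclusion~(2).

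In the opposite case the deformation becomes singular at some $\theta_0 < 2\pi$, and here the Agol/Cao--Meyerhoff technology gives a better lower bound on the product $\theta_0 \cdot l(\theta_0)$ than the value $h_{\text{max}} \approx 1.019675$ used by Hodgson--Kerckhoff, and simultaneously a larger admissible tube radius $R(\theta_0)$. Concretely, the drilling/filling estimates improved by Agol (and the closed-manifold tube estimates of Cao--Meyerhoff) allow one to guarantee that $R(\theta_0) \ge \tfrac{1}{2}\log 3$ throughout the deformation, and to replace $h_{\text{max}}$ by a substantially larger constant, so that $l(\theta_0)\cdot \sinh(R(\theta_0)) \cdot R(\theta_0)/(R(\theta_0)+1)$ is bounded below by a number on the order of $0.25$ rather than $0.03131$.

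With these geometric inputs in hand, the analytic half of the argument is unchanged. One builds the $1$-form $\alpha_\epsilon$ supported in the maximal embedded tube about the cone locus as in \S~\ref{form_construction_section}, satisfying $\|d\alpha_\epsilon\| \le 1 + 1/(R(\theta_0) - \epsilon)$ and $\int_\gamma \alpha_\epsilon = l(\theta_0)\sinh(R(\theta_0))$. Applied to a good $p$-Seifert surface $S$ realizing (or nearly realizing) $\|K\|$, PL wrapped as in the proof of Theorem~\ref{hyperbolic_complement_estimate} to produce a homotopic representative $S''$ in $M - K$ with $\area(S'') \le \epsilon - 2\pi\chi(S)$, Stokes' theorem yields
\[
p \cdot l(\theta_0)\cdot \sinh(R(\theta_0)) \cdot \frac{R(\theta_0)}{R(\theta_0)+1} \;\le\; -2\pi\chi(S),
\]
after letting $\epsilon \to 0$. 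Dividing by $4\pi p$ gives $\|K\| \ge 1/50$ once the improved geometric constant on the left-hand side is plugged in.

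The main obstacle is not the PL wrapping, nor the Stokes/Gauss--Bonnet estimate — both carry over verbatim — but rather packaging the Agol and Cao--Meyerhoff inputs into a clean bound on the product $l\cdot\sinh(R)\cdot R/(R+1)$ uniformly along the cone deformation, and simultaneously ensuring that if the deformation reaches $\theta = 2\pi$ the resulting geodesic really is contained in a Margulis tube as defined in Definition~\ref{Margulis_tube_definition}. Once the geometric constants are optimized — the cutoff $\beta$ in the construction of $\alpha_\epsilon$ can be refined slightly for a further small gain, and the profile $(\sinh(R)-\sinh(r))$ in the definition of $\alpha$ could be replaced by any profile making $d\alpha$ as small as possible while keeping $\int_\gamma \alpha$ large — the constant $1/50$ falls out.
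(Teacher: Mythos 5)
Your proposal has a genuine gap: the geometric inputs you need simply are not supplied by the references you invoke. Agol's paper gives bounds on exceptional Dehn fillings via the length of slopes on a maximal horospherical cusp, and Cao--Meyerhoff give a lower bound on the area of a maximal cusp; neither provides improved estimates \emph{along a cone-manifold deformation}. Your claims that one can ``guarantee that $R(\theta_0) \ge \tfrac{1}{2}\log 3$ throughout the deformation'' and ``replace $h_{\text{max}}$ by a substantially larger constant'' are asserted without justification, and to the best of current technology the Hodgson--Kerckhoff bounds ($R \ge 0.531$, $\theta\cdot l \le h_{\max} \approx 1.0197$) are what one actually gets from the cone-deformation framework. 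The target value $\approx 0.25$ for $l\cdot\sinh(R)\cdot R/(R+1)$ is reverse-engineered from the desired conclusion rather than derived, so the argument does not close.

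The paper's proof of Proposition~\ref{improved_hyperbolic_estimate} abandons the cone-deformation/wrapping machinery entirely and works directly in the cusped manifold $M-K$ with its maximal horospherical cusp torus $T$. The ingredients are: Agol's inequality $|\chi(S)| \ge \ell(\partial S)/6$ for an essential surface $S$ with boundary on $T$; the relations $\ell(\partial S) = q\ell(\sigma)$ and $p = q\Delta(\sigma,\mu)$ between the boundary slope $\sigma$ and the meridian $\mu$; the cusp area bound $A \le \ell(\sigma)\ell(\mu)/\Delta(\sigma,\mu)$; and the Cao--Meyerhoff bound $A \ge 3.35$. Chaining these gives $|\chi(S)|/2p \ge \sqrt{A}/12\ell_N(\mu)$ in terms of the normalized length $\ell_N(\mu)$, and the constant $C = \sqrt{3.35}/(12\cdot 7.515) > 1/50$ emerges because $\ell_N(\mu) \ge 7.515$ is precisely the quantitative Dehn-filling threshold guaranteeing that $M$ is hyperbolic with $K$ the core of a Margulis tube. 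If you want to improve on $1/402$, this cusp-geometry route is the one to take; trying to tighten the cone-deformation constants with the cited references will not work.
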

\begin{proof}
Let $S$ be a $p$-Seifert surface for $K$.
Let $T$ be a maximal horospherical cusp torus in $M-K$; we may regard $S$ as satisfying
$\partial S \subset T$. Since $S$ is essential, Thm.~5.1 from \cite{Agol} gives $|\chi(S)| \ge \ell(\partial S)/6$
where $\ell$ denotes Euclidean length measured on $T$. Note that $\ell(\partial S)=q\ell(\sigma)$, where
$q$ is the number of boundary components of $S$, and $\sigma$ is the boundary slope. Also,
$p=q\Delta(\sigma,\mu)$, where $\Delta(\sigma,\mu)$ is the geometric intersection number of $\sigma$ with the
meridian slope $\mu$. Hence
$$|\chi(S)|/2p \ge \ell(\partial S)/12p = q\ell(\sigma)/12q\Delta(\sigma,\mu) = \ell(\sigma)/12\Delta(\sigma,\mu)$$
Let $A$ be the area of $T$. Then by the proof of Thm.~8.1 from \cite{Agol},
$$A \le \ell(\sigma)\ell(\mu)/\Delta(\sigma,\mu)$$
Hence
$$|\chi(S)|/2p \ge A/12\ell(\mu) \ge \sqrt{A}/12\ell_N(\mu)$$
where $\ell_N(\mu):=\ell(\mu)/\sqrt{A}$ is the {\em normalized length} of $\mu$ (see \cite{Cao_Meyerhoff}).
By \cite{Cao_Meyerhoff} one knows without hypothesis that $A\ge 3.35$. If we define 
$$C:=\sqrt{3.35}/12(7.515) \sim 0.0203 > 1/50$$
then we conclude
$$|\chi(S)|/2p < C \text{ implies that } \ell_N(\mu)\ge 7.515$$
and therefore by \cite{Cao_Meyerhoff} p.~410 we deduce that $M$ is hyperbolic and $K$ is isotopic to the core
of a Margulis tube (i.e.\/ a geodesic of length $\le 0.162$ with tube radius $\ge 0.531$).
\end{proof}

It is therefore probably safe to replace $1/402$ by $1/50$ throughout the sequel by appealing to 
Proposition~\ref{improved_hyperbolic_estimate} in place of Theorem~\ref{hyperbolic_complement_estimate}
(it is, however, unlikely that $1/50$ is sharp).

\section{General knots}\label{general_knots_section}

We are now in a position to discuss the most general case of a knot $K$ in a
closed, orientable $3$-manifold $M$ such that $[K]$ has finite order in $H_1(M)$.
The discussion is case-by-case, and depends on the (well-known) prime and JSJ decomposition
theorems.

\medskip

The first step is to consider the interaction of $(M,K)$ with the essential $2$-spheres in $M$.
Such spheres are treated by the following theorem.

\begin{theorem}\label{reducible_theorem}
Let $K$ be a knot in a reducible manifold $M$. 
Then either 
\begin{enumerate}
\item{$\|K\| \ge 1/12$; or}
\item{there is a decomposition $M = M' \# M''$, $K \subset M'$ and either
\begin{enumerate}
\item{$M'$ is irreducible, or}
\item{$(M',K) = (\RP^3,\RP^1)\#(\RP^3,\RP^1)$}
\end{enumerate}
}
\end{enumerate}
\end{theorem}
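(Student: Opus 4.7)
The plan is to pick an essential $2$-sphere in $M$ meeting $K$ minimally, pair it with a good $p$-Seifert surface $F$ nearly realizing $\|K\|$, and run the intersection-graph machinery of Section~\ref{graph_section}. Assume for contradiction that $\|K\| < 1/12$, and fix a good $p$-Seifert surface $F$ with $\eta(F)/p < 1/12$. Choose an essential $2$-sphere $\Sigma \subset M$ minimizing $n := |\Sigma \cap K|$ over all essential spheres.

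If $n = 0$, then $\Sigma$ yields a connect sum decomposition $M = M' \# M''$ with $K \subset M'$, and iterating along a prime decomposition reduces us either to conclusion (2)(a) (if the prime summand containing $K$ is irreducible) or to the case that every essential $2$-sphere meets $K$. So assume $n > 0$ and build the intersection graphs $\Gamma_F \subset \widehat F$ and $\Gamma_G \subset \Sigma$ as in Section~\ref{graph_section}, arranging no monogons by Lemma~\ref{no_monogons}. Since $M$ is closed, every edge is interior and $2e = pn$. If $\Gamma_F$ is a beachball of the first kind then by Remark~\ref{beachball_remark} we have $\|K\| = 0$ and conclusion (3) of Theorem~\ref{vanishing_theorem} places $K$ inside a twisted $S^1$-bundle over an orbifold M\"obius band; this irreducible submanifold sits in some irreducible prime summand of $M$ (it cannot embed in an $S^1 \times S^2$ summand for fundamental-group reasons), so we are in conclusion (2)(a).

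The heart of the argument is to control parallel families of interior edges of $\Gamma_F$. Since all vertices of $\Gamma_F$ carry the same sign, a parallel family of $\lfloor n/2 \rfloor + 1$ interior edges must contain a length-$2$ Scharlemann $S$-cycle. The bigon face $B \subset F$ of this $S$-cycle, glued along two meridional arcs on $\partial N(K)$ to the disk region on $\Sigma$ bounded by the two $S$-cycle edges, produces a new $2$-sphere $\Sigma'$ in $M$ meeting $K$ in $n - 2$ points. Minimality of $n$ forces every component of $\Sigma'$ to bound a ball in $M$, and a ball-analysis in the spirit of Lemma~\ref{interior_parallel} (interior edges parallel in both graphs yielding an $\RP^3$ summand) shows that $(M,K)$ splits as $(M_0, K_0) \# (\RP^3, \RP^1) \# (\RP^3, \RP^1)$, giving conclusion (2)(b).

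Absent such a large parallel family, every parallel class in $\Gamma_F$ has at most $\lfloor n/2 \rfloor$ members, so the reduced graph $\overline{\Gamma}_F$ has $\overline e \ge (pn/2)/(n/2) = p$ edges. Lemma~\ref{reduced_graph_euler_count} then gives $-\chi(F) \ge \overline e / 3 \ge p/3$, whence $\eta(F)/p \ge 1/6 > 1/12$, contradicting the choice of $F$. The main technical obstacle I expect is the Scharlemann-surgery step: carrying out the surgery cleanly in the possible presence of simple closed curves of $F \cap \Sigma$, and extracting the $(\RP^3, \RP^1) \# (\RP^3, \RP^1)$ configuration as the sole obstruction to reducing $n$, analogous to the parallel-edges analysis that underlies Lemmas~\ref{interior_parallel} and~\ref{parallel_interior_edges_sublemma}.
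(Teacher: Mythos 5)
The central step of your argument --- that a family of $\lfloor n/2\rfloor+1$ parallel interior edges of $\Gamma_F$ contains a length-$2$ Scharlemann cycle which can be ``surgered away,'' either reducing $n$ by $2$ or exhibiting $(M,K)$ as $(M_0,K_0)\#(\RP^3,\RP^1)\#(\RP^3,\RP^1)$ --- has a genuine gap, and it is exactly the gap that forces the paper to settle for the constant $1/12$ rather than the $1/6$ your edge count would give. First, the surgery you describe does not produce a sphere: the bigon face $B\subset F$ and a disk region of $\Sigma$ bounded by the two edges are two disks glued along two disjoint boundary arcs, which yields an annulus or M\"obius band, not a $2$-sphere; likewise, tubing $\Sigma$ to itself along the $1$-handle of $N(K)$ between the punctures $i$ and $i+1$ yields a torus. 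More importantly, even the correct version of the Scharlemann-cycle construction (forming the solid torus $N(H\cup B)$ and capping with a disk of $\Sigma$) only shows that $M$ contains an $\RP^3$ summand whose reducing sphere runs through the region of $\Sigma$ spanned by the two edges; that region may contain many other punctures of $K$ and other vertices and edges of $\Gamma_P$, so one obtains neither a sphere meeting $K$ fewer than $n$ times nor the clean decomposition of conclusion (2)(b). Ordinary $S$-cycles genuinely occur on essential spheres meeting $K$ minimally (the case $(\RP^3,\RP^1)\#(\RP^3,\RP^1)$ itself, with $F$ an annulus and $n=2$, is one), and they cannot be excluded. What can be excluded when $n\ge 4$ is an \emph{extended} $S$-cycle --- four parallel edges whose middle pair is an $S$-cycle --- by Lemma~2.3 of \cite{Wu}; this bounds parallel families by $n/2+2$, giving only $\overline{e}\ge p/2$, hence $-\chi(F)\ge p/6$ and $\|K\|\ge 1/12$. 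The case $n=2$ then requires the separate argument via \cite{Gordon_Litherland}: an innermost parallel pair in $\Gamma_F$ is parallel in both graphs, which limits parallel families to two edges and isolates the two-edge beachball as the sole source of conclusion (2)(b).

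Two smaller points. Your disposal of the beachball of the first kind is also incorrect: the twisted $S^1$-bundle over the M\"obius band containing $K$ need not lie in a prime summand of $M$ (a compact submanifold can cross every reducing sphere), and in the model case $(\RP^3\#\RP^3,\RP^1\#\RP^1)$ it does not --- that case is conclusion (2)(b), which your paragraph would misreport as (2)(a). Finally, when you pass to the summand $M'$ in which every essential sphere meets $K$, you must produce a $p$-Seifert surface for $K$ in $M'$ with no larger $-\chi^-$; the paper does this by surgering $S$ along its circles of intersection with the decomposing sphere.
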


Note that in case (2)(b), $\|K\|=0$.

\begin{proof}
Let $S$ be a good $p$-Seifert surface for $K$.

First assume that $M-K$ is irreducible. 
Let $\Sigma$ be an essential $2$-sphere in $M$, chosen so that $n=|\Sigma\cap K|$
is minimal. 
Since $[K]$ has finite order in $H_1(M)$, the algebraic intersection
number of $K$ and $\Sigma$ is zero; thus $n$ is even and $>0$. 

Let $P$ be the planar surface $\Sigma - \inte N(K)$. 
Let $\Gamma_S$, $\Gamma_P$ be the intersection graphs in $\widehat S$ and 
$\Sigma$ respectively, defined by the arc components of $S\cap P$. 

{\noindent \bf Case A.} ($n\ge 4$)
An {\em extended $S$-cycle} is
a configuration of the form depicted in Figure~\ref{extend_S_cycle}; i.e.\ a series of
four parallel edges, whose middle pair form an ordinary $S$-cycle.

\begin{figure}[thbp]
\labellist
\small\hair 2pt
\pinlabel $i+1$ at 111 209
\pinlabel $i$ at 126 194
\pinlabel $i$ at 274 209
\pinlabel $i+1$ at 289 194
\endlabellist
\centering
\includegraphics[scale=0.5]{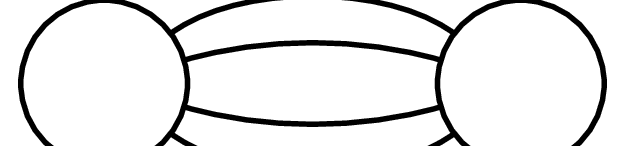}
\caption{An extended $S$-cycle}\label{extend_S_cycle}
\end{figure}

In this context, and with the assumption that $n\ge 4$, Lemma~2.3 from \cite{Wu}
says precisely that $\Gamma_S$ does not contain an extended $S$-cycle.
Hence $\Gamma_S$ does not contain a family of $(n/2 + 3)$ parallel edges.
Let $\overline{\Gamma}_S$ denote the reduced graph of $\Gamma_S$. Since $e=pn/2$, we can estimate
$\overline{e} \ge p(n/2)/(n/2 + 2) \ge p/2$ (because $n\ge 4$), where $\overline{e}$ denotes the number of edges
of $\overline{\Gamma}_S$. Hence by Lemma~\ref{reduced_graph_euler_count}, we have $-\chi(S)\ge p/6$ and
therefore $\|K\|\ge 1/12$.

\medskip

{\noindent \bf Case B.} ($n=2$)
Here $P$ is an annulus, so all edges of $\Gamma_P$ are parallel (i.e.\ $\Gamma_P$ is a beachball).

If $\Gamma_S$ has a pair of parallel edges, consider an innermost such pair 
$e_1,e_2$, i.e. $e_1$ and $e_2$ cobound a bigon face of $\Gamma_S$. 
The corresponding edges of $\Gamma_P$ are necessarily parallel, and it 
follows that their endpoints, $x_1$ and $x_2$, say, at a given vertex of 
$\Gamma_P$ are diametrically opposite, in the sense that there are the same
number of edge-endpoints in each of the two intervals around the vertex on 
either side of the pair $\{x_1,x_2\}$; see Figure~4 in \cite{Gordon_Litherland}. 
Therefore $\Gamma_S$ cannot have three mutually parallel edges. 

If $\Gamma_S$ is a beachball with exactly two edges, the argument in the proof
of Proposition~1.3, case (1) from \cite{Gordon_Litherland} shows that $\Sigma$ decomposes 
$(M,K)$ as a connected sum $(\RP^3,\RP^1)\,\#\, (\RP^3, \RP^1)$, 
which we think of as a degenerate case of case (2)(b) in 
the statement of the theorem. 

We may therefore assume that $\Gamma_S$ is not a beachball, and that 
no family of parallel edges in $\Gamma_S$ has more than two edges. 
Consequently the reduced graph $\bar \Gamma_S$ exists and its number 
of edges $\bar e\ge p/2$, so Lemma~\ref{reduced_graph_euler_count} gives $\chi(S) \le -p/6$, and 
hence $\|K\| \ge 1/12$. 
This completes the analysis when $M-K$ is irreducible.   

\medskip

If $M-K$ is reducible, we can write $M= M'\# M''$ where $K\subset M'$ 
and $M'-K$ is irreducible. 
Let $\Sigma'$ be a $2$-sphere in $M-K$ realizing the connected sum decomposition
${M-K = (M'-K)\,\#\, M''}$.
By surgering $S$ along the curves of intersection $S\cap \Sigma'$ we get a 
$p$-Seifert surface $S'$ for $K$ in $M'$, with $-\chi^-(S')\le -\chi^-(S)$.
The theorem now follows from the argument in the case that $M-K$ is 
irreducible.
\end{proof}

After Theorem~\ref{reducible_theorem} we may assume that $M$ is irreducible.
Moreover, if $M-K$ is reducible then $K$ lies in a 3-ball in $M$ and so either
$\|K\|\ge 1/2$ or $K$ bounds a disk (in which case $\|K\|=0$).
We therefore assume in the sequel that both $M$ and $M-K$ are irreducible.

\medskip

A closed, irreducible $3$-manifold $M$ is either $S^3$, a lens
space, an atoroidal Seifert fiber space over $S^2$ with three exceptional 
fibers, hyperbolic, or toroidal. The next theorem treats the case that $M$ is a lens space.

\begin{theorem}\label{lens_theorem} 
Let $K$ be a knot in a lens space $M$. 
Then either 
\begin{enumerate}
\item{$\|K\| \ge 1/24$; or}
\item{$K$ lies on a Heegaard torus in $M$; or}
\item{$M$ is of the form $L(4k,2k-1)$ and $K$ lies on a Klein bottle in $M$ as a non-separating
orientation-preserving curve.}
\end{enumerate}
\end{theorem}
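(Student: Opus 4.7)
The plan is to combine thin position with respect to the genus-one Heegaard splitting $M = V_1 \cup_{\widehat G} V_2$ with the combinatorial machinery of Section~\ref{graph_section}. Choose a good $p$-Seifert surface $F$ for $K$ with $\eta(F)/p$ arbitrarily close to $\|K\|$, and isotope $K$ into thin position so that $n := |K \cap \widehat G|$ is minimal; since $\widehat G$ is separating, $n$ is even.

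If $n = 0$, then $K$ lies in one of the solid tori $V_i$, and applying Proposition~\ref{compressible_torus_proposition} to the portion of $F$ lying in $V_i$ (viewed as a relative $p$-Seifert surface for $K$ in $V_i$, with $T = \widehat G = \partial V_i$ compressible) gives either $-\chi^-(F) > p/6$, whence $\|K\| > 1/12$, or $K$ is isotopic into $\widehat G$, giving conclusion~(2). Assume henceforth $n \ge 2$ and form the intersection graphs $\Gamma_F$ on $\widehat F$ and $\Gamma_G$ on $\widehat G$. Since $M$ is closed, all edges are interior and $e_i = pn/2$. By Lemma~\ref{no_monogons} neither graph has monogon faces. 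Since $\widehat G$ is a torus we have $\chi(G) = -n$, so applying Lemma~\ref{reduced_graph_euler_count} to $\Gamma_G$ gives $\overline{e}_G \le 3n$.

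If $\Gamma_F$ is a beachball of the first kind, then by Remark~\ref{beachball_remark} and Theorem~\ref{vanishing_theorem}(3), $K$ is a fiber of a Seifert-fibered submanifold $N \subset M$ whose base orbifold is a M\"obius band (possibly with one cone point of order $r \ge 1$). Since $M$ has Heegaard genus one, $\overline{M \setminus N}$ is a solid torus; computing the Seifert-gluing identifies $M \cong L(4k, 2k-1)$ for some $k \ge 1$ and realizes $K$ as a non-separating orientation-preserving curve on the Klein bottle obtained by doubling the base M\"obius band along its boundary, yielding conclusion~(3). Otherwise $\overline{\Gamma}_F$ exists and $-\chi(F) \ge \overline{e}_F/3$ by Lemma~\ref{reduced_graph_euler_count}. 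Suppose for contradiction that $\|K\| < 1/24$; then $-\chi(F) < p/12$ and hence $\overline{e}_F < p/4$, so pigeonhole produces a parallel class in $\Gamma_F$ containing strictly more than $2n$ edges. Applying Lemma~\ref{parallel_edges} (with suitable $m \ge 2$) forces, for some fixed index $k$, at least $2m$ edges of $\Gamma_G$ of each type $(i, k-i)$, all mutually parallel in $\Gamma_F$. Combining this forced structure with the bound $\overline{e}_G \le 3n$ and the topological restriction that between any two fixed punctures on a torus only finitely many pairwise non-parallel reduced arcs fit, one extracts a pair of these $(i, k-i)$-edges that are parallel in $\Gamma_G$ as well. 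Lemma~\ref{interior_parallel} then yields $(M, K) = (M', K') \# (\RP^3, \RP^1)$, and irreducibility of $M$ collapses this to $(M, K) = (\RP^3, \RP^1)$, in which $K$ is a core of a Heegaard solid torus and so lies on a Heegaard torus, verifying conclusion~(2).

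The main obstacle is obtaining the sharp constant $1/24$. A naive pigeonhole using Lemma~\ref{parallel_edges} with $m = 3$ only yields $\|K\| \ge 1/36$, and with $m = 4$ the forced parallelism in $\Gamma_G$ falls out more cleanly but with a worse constant of $1/48$. Sharpening to $1/24$ requires exploiting the matching-like structure of the forced $(i, k-i)$-edges (they partition the vertices of $\Gamma_G$ into pairs) together with the observation that a matching-structured subgraph cannot be promoted to a triangulation of the torus $\widehat G$ within the $\overline{e}_G \le 3n$ budget, thereby producing the desired parallel pair already from classes of size only $2n+1$. The beachball case is conceptually direct but requires a careful Seifert-gluing computation to single out $L(4k, 2k-1)$ and to verify the Klein bottle conclusion.
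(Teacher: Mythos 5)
Your overall strategy is the paper's: thin position with respect to the Heegaard torus, the intersection graphs $\Gamma_F$ and $\Gamma_G$, a pigeonhole giving a family of $2n+1$ parallel edges when $\|K\|<1/24$, and a separate beachball branch producing $L(4k,2k-1)$. But the decisive step is missing. You assert that a family of $2n+1$ parallel edges in $\Gamma_F$ yields a pair of edges parallel in \emph{both} graphs, and your closing paragraph concedes you cannot actually derive this: the arguments you can run need $3n+1$ or $4n+1$ parallel edges (giving only $1/36$ or $1/48$), and the proposed repair --- that a ``matching-structured subgraph cannot be promoted to a triangulation of $\widehat G$ within the $\overline{e}_G\le 3n$ budget'' --- is not the right tool, since the obstruction is local rather than global: $5\le 3n$ for every $n\ge 2$, so the total reduced-edge count of $\Gamma_G$ never forbids five pairwise non-parallel $(i_0,k-i_0)$-edges. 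The argument that actually delivers $1/24$ is: Lemma~\ref{parallel_edges} with $m=2$ gives five $(i_0,k-i_0)$-edges in $\Gamma_G$ and four $(i,k-i)$-edges for every $i$; two fixed points on a torus are joined by at most four embedded, pairwise non-parallel arcs that are disjoint except at their endpoints (engulf one arc in a disk and count essential embedded loops meeting in a single point), so two of the five are parallel as arcs in $\widehat G$; if the bigon they cobound contains vertices of $\Gamma_G$, those vertices come in pairs $j$, $k-j$ whose four connecting edges are trapped inside the bigon (edges of $\Gamma_G$ are disjoint, hence cannot cross its boundary), and an innermost such pair produces two edges parallel in $\Gamma_G$ itself. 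Without this local argument your constant $1/24$ is unsupported.

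There is a second genuine error at the endgame. From Lemma~\ref{interior_parallel} you get $(M,K)=(M',K')\,\#\,(\RP^3,\RP^1)$, and since $M$ is a lens space indeed $M'=S^3$; but irreducibility of $M$ does \emph{not} collapse the pair to $(\RP^3,\RP^1)$, because $K'$ may be a nontrivial knot in $S^3$. You need the case split: if $K'$ is trivial then $(M,K)=(\RP^3,\RP^1)$ and conclusion (2) holds; if $K'$ is nontrivial then $\|K'\|\ge 1/2$ and, since $\RP^1\subset\RP^3$ is $2$-trivial, Theorem~\ref{connect_sum_theorem} gives $\|K\|=\|K'\|+1/2-1/4\ge 3/4$, landing in conclusion (1). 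Finally, the beachball branch is where conclusion (3) actually comes from; you identify it correctly but defer the computation (namely that $\overline{M-N}$ is a solid torus whose meridian is not the fiber, so $M$ is Seifert fibered over $\RP^2$ with at most one exceptional fiber and hence is $L(4k,2k-1)$, with $K$ on a Heegaard torus when the multiplicity $r$ exceeds $1$), so that branch too is incomplete as written.
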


Note that in case (3) we have $\|K\|=0$; this is a special case of Theorem~\ref{vanishing_theorem} (3). 

\begin{proof} 
Let $\widehat T$ be a Heegaard torus in $M$. 
Then either (2) holds, or we can put $K$ in thin position with respect 
to $\widehat T$. 
Assuming the latter, let $n = |K\cap \widehat T|$; so $n$ is even and $\ge 2$.
Let $S$ be a good $p$-Seifert surface for $K$ in $M$. 
We then get intersection graphs $\Gamma_S$, $\Gamma_T$. 
Thin position of $K$ and incompressibility of $S$ guarantee by Lemma~\ref{no_monogons}
that $\Gamma_S$ and $\Gamma_T$ have no monogon disk faces.

We need a sublemma.

\begin{sublemma}\label{lem9}
Suppose there is a pair of edges that are parallel on both $\Gamma_S$ and 
$\Gamma_T$. 
Then $M = \RP^3$, and either
\begin{enumerate}
\item{$\|K\| \ge 3/4$; or}
\item{$(M,K) = (\RP^3, \RP^1)$.}
\end{enumerate}
\end{sublemma}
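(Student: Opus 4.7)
The plan is to invoke Lemma~\ref{interior_parallel} as a black box. In the present setup $M$ is closed, so $\partial M = \emptyset$ and every edge of $\Gamma_S$ and $\Gamma_T$ is interior. The hypothesis of the sublemma is therefore exactly the hypothesis of Lemma~\ref{interior_parallel}, and that lemma outputs a knot pair decomposition
$$(M,K) = (M',K') \,\#\, (\RP^3,\RP^1)$$
for some knot $K' \subset M'$. The underlying 3-manifold connect sum $M = M' \# \RP^3$ produces a separating $2$-sphere in $M$; since we have reduced to the case that $M$ is irreducible (per the discussion following Theorem~\ref{reducible_theorem}), this sphere bounds a ball, forcing $M' = S^3$. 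Hence $M = \RP^3$ and $K = K' \# \RP^1$ for some knot $K' \subset S^3$, which already gives the first assertion of the sublemma.

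A short case split on $K'$ then finishes the argument. If $K'$ is the unknot, the knot connect sum is trivial, so $(M,K) = (\RP^3,\RP^1)$ and we obtain conclusion~(2). Otherwise $K'$ is non-trivial in $S^3$, and since $S^3$ contains no non-trivial lens space summands, $K'$ is not $p$-trivial for any $p$; meanwhile $\RP^1 \subset \RP^3$ is $2$-trivial. The middle case of Theorem~\ref{connect_sum_theorem}(2), applied with $p_2 = 2$, then yields
$$\|K\| = \|K'\| + \tfrac{1}{2} - \tfrac{1}{4} = \|K'\| + \tfrac{1}{4}.$$
Since $g(K') \ge 1$, Corollary~\ref{homology_corollary} gives $\|K'\| = g(K') - 1/2 \ge 1/2$, so $\|K\| \ge 3/4$, which is conclusion~(1).

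The one point that deserves careful checking is whether Lemma~\ref{interior_parallel} is truly available when $\widehat{G}$ is a Heegaard torus rather than an essential surface (the typical setting earlier in Section~\ref{graph_section}). However, the Gordon--Litherland argument cited in the proof of that lemma is entirely local around the two bigons cobounded by the parallel edges: it builds the $\RP^3$ summand from a $2$-sphere assembled out of pieces of $\widehat{S} \cup \widehat{T}$ near those bigons and uses no global property of $\widehat{T}$. This is therefore a cosmetic rather than substantive obstacle, and I expect no further work beyond the case split above.
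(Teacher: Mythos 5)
Your proof is correct and follows essentially the same route as the paper: apply Lemma~\ref{interior_parallel} to get $(M,K)=(M',K')\,\#\,(\RP^3,\RP^1)$, observe $M'=S^3$ since $M$ is a lens space (hence prime), and then either $K'$ is trivial, giving conclusion~(2), or $\|K'\|\ge 1/2$ and the middle case of Theorem~\ref{connect_sum_theorem}(2) with $p_2=2$ gives $\|K\|=\|K'\|+\frac14\ge\frac34$. Your closing worry about applying Lemma~\ref{interior_parallel} when $\widehat{G}$ is a Heegaard surface is resolved by the paper's own setup, which states that lemma in a framework explicitly covering both essential surfaces and Heegaard surfaces.
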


\begin{proof} 
By Lemma~\ref{interior_parallel} we have 
$(M,K) = (M',K')\,\#\, (\RP^3,\RP^1)$. 
Since $M$ is a lens space we must have $M'= S^3$. 
If $K'$ is trivial we have (2). 
Suppose $K'$ is non-trivial. 
Then $\|K'\|\ge 1/2$ and, since $\RP^1$ in $\RP^3$ is 2-trivial, 
Theorem~\ref{connect_sum_theorem} implies that $\|K\| = \|K'\| - \frac14 +\frac12 \ge \frac34$.
\end{proof}

Note that in case~(2) (i.e.\ $(M,K) = (\RP^3,\RP^1)$) we have $\|K\| =0$.

If $\Gamma_S$ has $(2n+1)$ parallel edges then by Lemma~\ref{parallel_edges} bullet~(2)  
there are indices $i_0$ and $k$ so that $\Gamma_T$ has $5$ edges labeled $(i_0,k-i_0)$.
Remember that this means that there are vertices $i_0$ and $k-i_0$ in $\Gamma_T$ that are joined
to each other by at least $5$ edges. On a torus, one can find at most four embedded pairwise
non-parallel arcs joining two points, that are disjoint except at their endpoints. To
see this, ``engulf'' one of the edges by a disk, and observe that it is equivalent
to show that there are at most three pairwise non-parallel essential embedded loops
that intersect each other in one point; this latter fact can be shown using intersection
number. Consequently we can deduce that two of the $(i_0,k-i_0)$-edges must be parallel
in $\widehat T$.

If they are parallel in $\Gamma_T$ (i.e.\ if they cobound an embedded complementary bigon)
then there is a pair of edges that are parallel in both graphs, and 
Sublemma~\ref{lem9} applies. If not, then the disk $D$ in $\widehat T$ 
realizing the parallelism 
of the two edges must contain vertices of $\Gamma_T$ in its interior.
By Lemma~\ref{parallel_edges} bullet~(1), these vertices come in pairs $i$, $k-i$,
and each pair are joined by $4$ edges. 
An easy innermost argument in the disk $D$ shows that, for some $i$, 
some pair of $(i,k-i)$ edges are parallel in $\Gamma_T$. 
Hence again we get edges that are parallel in both graphs. 

By Sublemma~\ref{lem9} we may therefore suppose that there is no family of 
$(2n+1)$ parallel edges in $\Gamma_S$. 
If $\Gamma_S$ is not a beachball then the reduced graph $\overline{\Gamma}_S$ exists and satisfies
$\bar e \ge e/2n = pn/4n = p/4$.
Hence $-\chi^- (S) \ge p/12$ by Lemma~\ref{reduced_graph_euler_count}, giving 
$\|K\|\ge 1/24$.

If $\Gamma_S$ is a beachball then as observed in Remark~\ref{beachball_remark}, $K$ satisfies 
conclusion~(3) of Theorem~\ref{vanishing_theorem}, i.e. $K$ is the fiber of multiplicity $r$ 
in a Seifert fiber subspace $N$ of $M$ whose base orbifold is a M\"obius 
band with one orbifold point, of order $r\ge 1$. 
Since $M$ is a lens space, $\overline{M-N}$ is a solid torus, and the 
meridian of this solid torus is not the Seifert fiber of $N$. 
Hence $M$ is a Seifert fiber space with  orbifold $\RP^2$ and one 
orbifold point of order $k\ge 1$, where $k=r$ if $r>1$. 
Thus $M$ is the lens space $L(4k,2k-1)$. 
If $r>1$ then $K$ lies on a Heegaard torus in $M$. 
If $r=1$ then $K$ satisfies conclusion~(3).
\end{proof}

\begin{remark}
The examples in Case~B of \S~\ref{example_subsection} show that for any $\epsilon>0$ there exists 
a lens space $M$ and a knot $K$ lying on a Heegaard torus in $M$ with 
$0< \|K\| <\epsilon$.
\end{remark}

\begin{remark}
The bound $1/24$ in Theorem~\ref{lens_theorem} is almost certainly not best possible. 
The smallest value of $\|K\|$ that we know  of for a knot $K$ in a lens 
space not satisfying (2) or (3) of Theorem~\ref{lens_theorem}, comes from the following 
example. 

Let $K'$ be the $(-2,3,7)$-pretzel knot in $S^3$. 
Then 19-surgery on $K'$ gives a knot $K$ in the lens space $L(19,7)$.
By Lemma~\ref{surgery_lemma}, $\|K\| = (g(K')-1/2)/19 = (5-\, 1/2)/19 = 9/38$.
Since $M- K\cong S^3 -K'$ is hyperbolic, $K$ does not lie on a Heegaard 
torus in $M$. 
In fact $K'$ is a Berge knot, and so $K$ is 1-bridge in $M$.
\end{remark}

\begin{remark}
Baker has shown in \cite{Baker} that if $K$ is a knot in a lens space $M$ 
such that $\|K\|$ is realized by a $p$-Seifert surface $S$ with a single 
boundary component then either $K$ is 1-bridge in $M$ or $\|K\| >1/4$.
By the proof of Lemma~\ref{surgery_lemma},
the hypothesis holds for pairs $(M,K)$ that come from surgery on a knot 
in $S^3$ (or any homology sphere).
\end{remark}

\begin{remark}
In our proof of Theorem~\ref{lens_theorem}, the argument shows that if $n>2$ then 
$\Gamma_S$ cannot have $(n+1)$ parallel edges. 
Hence (1) can be improved to $\|K\| \ge 1/12$ if (2) is weakened to 
say that $K$ is $0$- or $1$-bridge in $M$.
\end{remark}

The next proposition considers knots in hyperbolic $3$-manifolds, possibly with boundary.
The case that the complement of the knot is hyperbolic was already
treated in Theorem~\ref{hyperbolic_complement_estimate}. Here we treat the general case.

\begin{proposition}\label{hyperbolic_proposition}
Let $M$ be a hyperbolic $3$-manifold, possibly with boundary consisting 
of a union of tori. 
Let $K$ be a knot in $M$ such that $M-K$ is irreducible, and let $F$ 
be a relative $p$-Seifert surface for $K$.
Then either 
\begin{enumerate}
\item{$\eta (F)/p \ge 1/402$; or}
\item{$K$ is isotopic to a cable of a core of a Margulis tube; or}
\item{$K$ is isotopic into $\partial M$.}
\end{enumerate}
\end{proposition}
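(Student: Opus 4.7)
The plan is to reduce to Theorem~\ref{hyperbolic_complement_estimate} via analysis of the geometric decomposition of $M-K$. Assume $\eta(F)/p < 1/402$. If $M-K$ is hyperbolic, Theorem~\ref{hyperbolic_complement_estimate} applied to $K$ in $M$ directly yields conclusion~(2). Otherwise, since $M-K$ is irreducible with toroidal boundary, by Thurston's hyperbolization it contains an essential torus or an essential annulus.

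The main case is an essential torus $T \subset M-K$. By atoroidality of $M$, $T$ is either parallel to a boundary component in $M$ or compressible in $M$. In the first case, since $T$ is essential in $M-K$, the knot $K$ must lie in the $T^2 \times I$ parallelism region, and Corollary~\ref{torus_corollary} yields either $\eta(F)/p \ge 1/6 > 1/402$ or conclusion~(3). In the second case, $T$ compresses in $M$ and, by irreducibility, bounds a solid torus $V \subset M$; a compressing disk for $T$ in $V$ would lie in $M-K$ unless it meets $K$, forcing $K \subset V$, and symmetrically $T$ is incompressible in $M-V$. Isotope $F$ so that $F \cap T$ consists of curves essential in both $F$ and $T$, and set $F' := F \cap V$. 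Disk components of $F'$ are ruled out by standard arguments (a disk with boundary essential in $T$ would contradict essentiality of $F \cap T$ in $F$, and a disk with boundary on $\partial N(K)$ would force $K$ to be trivial, contradicting irreducibility of $M-K$), so $\eta(F') \le \eta(F)$. Applying Proposition~\ref{compressible_torus_proposition} to $(V, K, F')$ yields either $-\chi^-(F')/p > 1/6$---giving $\eta(F)/p \ge \eta(F')/p > 1/12 > 1/402$, a contradiction---or $K$ is isotopic into $T$, exhibiting $K$ as a cable of the core $K_0$ of $V$.

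To conclude that $K_0$ is the core of a Margulis tube, I iterate once. Incompressibility of $T = \partial V$ in $M-V$ implies $\partial N(K_0)$ is incompressible in $M - \inte N(K_0)$, so by Remark~\ref{remark_200} $K_0$ is not $m$-trivial for any $m$ (consistent with $M$ being hyperbolic and thus admitting no lens-space summands). Proposition~\ref{satellite_proposition} then gives $\|K_0\| \le \|K\|/q \le \eta(F)/p < 1/402$, where $q \ge 1$ is the winding number of the cable. Choose a $p_0$-Seifert surface $F_0$ for $K_0$ realizing $\|K_0\|$ (Proposition~\ref{rational_genus_is_achieved}) and run the same dichotomy on $(M, K_0, F_0)$. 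If $M - K_0$ is hyperbolic, Theorem~\ref{hyperbolic_complement_estimate} gives that $K_0$ is isotopic to the core of a Margulis tube, completing conclusion~(2) for $K$. If $K_0$ is isotopic into $\partial M$, so is $K$, giving conclusion~(3). If $K_0$ is itself a non-trivial cable of some knot $K_1$, then $K$ is a non-trivial cable of a non-trivial cable of $K_1$; the same incompressibility argument shows $K_1$ is not $m$-trivial, so Corollary~\ref{double_cable_corollary} yields $\|K\| > 1/12 > 1/402$, a contradiction.

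Essential annuli in $M-K$ are handled case-by-case by the position of $\partial A$. An annulus with one end on $\partial M$ and one on $\partial N(K)$ produces an isotopy of $K$ into $\partial M$ via Lemma~\ref{boundary_parallel}. An annulus with both ends on $\partial M$ is boundary-parallel in $M$ by anannularity of hyperbolic $M$, and its parallelism region must contain $K$ (else $A$ would be inessential in $M-K$), so Corollary~\ref{torus_corollary} again applies. An annulus with both ends on $\partial N(K)$, combined with a suitable subannulus of $\partial N(K)$, produces a torus in $M$ bounding a solid-torus neighborhood of $K$ whose core gives a cable structure on $K$, reducing to the torus case above. The principal technical difficulty is controlling the iteration step: the double-cable obstruction of Corollary~\ref{double_cable_corollary} is precisely the estimate needed to guarantee that one application of the cable reduction suffices to reach a knot with hyperbolic complement.
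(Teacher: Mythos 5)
Your overall strategy is the right one, but there are two genuine gaps in the reduction step. First, your chain $\|K_0\| \le \|K\|/q \le \eta(F)/p$ is not valid: $F$ is only a \emph{relative} $p$-Seifert surface, so it may have boundary on $\partial M$, and nothing bounds the absolute invariant $\|K\|$ (defined via honest $p$-Seifert surfaces, which may not even exist if $K$ is only rationally null-homologous rel $\partial M$) by $\eta(F)/p$. Indeed the whole reason conclusion~(3) appears in the statement is that a relative surface of tiny $\eta$ can exist (e.g.\ an annulus to $\partial M$) while $\|K\|$ is large or infinite. The same objection applies to your terminal appeal to Corollary~\ref{double_cable_corollary}, which produces $\|K\|>1/12$ --- not a contradiction with $\eta(F)/p<1/402$. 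The correct move is to restrict $F$ itself: $F_0 = F\cap X_0$ (where $X_0$ is the exterior of the solid torus $V$) is a relative $pq$-Seifert surface for $K_0$ in $M$ with $\eta(F_0)\le\eta(F)$, and one then invokes the ``Moreover'' (relative) clause of Theorem~\ref{hyperbolic_complement_estimate} for $K_0$. Second, that clause requires $M-K_0$ to be hyperbolic, which your construction does not guarantee: you pick an arbitrary essential torus, so the outside piece $X_0$ may itself be toroidal, forcing the iteration you describe. That iteration is not shown to terminate --- in particular the outcome ``$K_0$ is a \emph{trivial} cable of $K_1$'' (i.e.\ isotopic to the core of a smaller solid torus) is not excluded and is not covered by Corollary~\ref{double_cable_corollary}. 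The paper avoids the iteration entirely by choosing $T$ \emph{extremal}: an essential torus in $M-K$ such that the component $X_0$ not containing $K$ is atoroidal (possible since every essential torus in hyperbolic $M$ separates). Then $X_0$ is atoroidal and, as $M$ is hyperbolic, cannot be Seifert fibered, so it is hyperbolic and the relative version of Theorem~\ref{hyperbolic_complement_estimate} applies to $K_0$ in one step.

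Two smaller points. A compressible torus in an irreducible $M$ need not bound a solid torus --- it may lie in a $3$-ball; this case must be ruled out (the paper does so by noting $K$ would then lie in a $3$-ball, contradicting irreducibility of $M-K$). And your case analysis for essential annuli is both sketchy (the parallelism region of an annulus with both ends on $\partial M$ is a solid torus, not a neighborhood of an incompressible torus, so Corollary~\ref{torus_corollary} does not apply as stated) and avoidable: if $M-K$ is irreducible and atoroidal but not hyperbolic it is one of the small Seifert fibered spaces with boundary, and in each case the filling $M$ cannot be hyperbolic, so the dichotomy ``$M-K$ hyperbolic or $M-K$ toroidal'' is already exhaustive under the hypotheses.
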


\begin{corollary}\label{hyperbolic_corollary}
Let $K$ be a knot in a closed hyperbolic $3$-manifold $M$.
Then either 
\begin{enumerate}
\item{$\|K\| \ge 1/402$; or}
\item{$K$ is trivial; or} 
\item{$K$ is isotopic to a cable of the core of a Margulis tube.}
\end{enumerate}
\end{corollary}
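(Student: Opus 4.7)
The plan is to derive the corollary directly from Proposition~\ref{hyperbolic_proposition} (with $\partial M = \emptyset$) after separately handling the case where the knot complement is reducible. Since $M$ is hyperbolic, $M$ itself is irreducible, so the reducible case for $M-K$ is essentially elementary.

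First I would dispose of the case that $M-K$ is reducible. Any reducing $2$-sphere $\Sigma$ in $M-K$ bounds a $3$-ball $B$ in $M$ (by irreducibility of $M$), and $\Sigma$ being a reducing sphere in $M-K$ forces $K \subset B$. Thus $K$ can be regarded as a knot in $S^3$, and Corollary~\ref{homology_corollary} yields either $\|K\| = 0$ with $K$ trivial (conclusion~(2)), or $\|K\| = g(K) - 1/2 \ge 1/2 > 1/402$ (conclusion~(1)). Note this also absorbs the trivial knot case in general: a trivial $K$ bounds a disk and hence lies in a $3$-ball, so $M-K$ is automatically reducible.

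Now assume $M-K$ is irreducible. I would apply Proposition~\ref{hyperbolic_proposition} with $M$ closed hyperbolic and $\partial M = \emptyset$. Conclusion~(3) of the proposition ($K$ isotopic into $\partial M$) is vacuous, and relative $p$-Seifert surfaces coincide with ordinary $p$-Seifert surfaces. Thus for every good $p$-Seifert surface $F$ we get either $\eta(F)/p \ge 1/402$ or $K$ is isotopic to a cable of the core of a Margulis tube. In the latter case we are in conclusion~(3); otherwise, infimizing over $F$ via Proposition~\ref{rational_genus_is_achieved} (or the definition of $\|K\|$) gives $\|K\| \ge 1/402$.

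There is really no obstacle here: the entire technical weight sits in Proposition~\ref{hyperbolic_proposition} (and behind that, Theorem~\ref{hyperbolic_complement_estimate} with the PL wrapping construction and the Hodgson--Kerckhoff cone-deformation estimates). The corollary itself is a clean specialization, and the only care needed is to verify that the trivial knot case and the reducibility of $M-K$ are handled consistently, which the first paragraph above does.
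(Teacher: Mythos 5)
Your proposal is correct and matches the paper's (implicit) argument: the paper states the corollary as an immediate specialization of Proposition~\ref{hyperbolic_proposition} to closed $M$ (so that conclusion~(3) of the proposition is vacuous), having already dispensed with the case that $M-K$ is reducible by the blanket observation earlier in \S~\ref{general_knots_section} that $K$ then lies in a $3$-ball and so is either trivial or has $\|K\|\ge 1/2$. Your first paragraph just makes that standing reduction explicit, so there is nothing genuinely different here.
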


\begin{proof}[Proof of Proposition~\ref{hyperbolic_proposition}]
If $M-K$ is hyperbolic then the result follows from Theorem~\ref{hyperbolic_complement_estimate}.
So we may assume that $M-K$ is toroidal.

Since $M$ is hyperbolic, every essential torus in $M-K$ is separating, so 
there exists an {\em extremal} such torus, i.e.\ an essential torus $T$
in $M-K$ such that the component $X_0$ of $M$ cut along $T$ that does not 
contain $K$ is atoroidal. 
Let $N= \overline{M-X_0}$; thus $K$ is contained in $N$.

Let $F$ be a good relative $p$-Seifert surface for $K$ in $M$, and let 
$F' = F\cap N$, $F_0 = F\cap X_0$. 
Since $T$ is incompressible in $M-K$, and $M-K$ is irreducible, we may 
assume that no component of $F'$ or $F_0$ is a disk.
Hence $\eta (F) = \eta (F') + \eta (F_0)$.

Since $M$ is atoroidal, either $T$ compresses in $N$ or $N$ is a product 
$T^2 \times I$ where $T^2\times \{0\}$ is a component of $\partial M$ 
and $T^2 \times \{1\}=T$. 
In the latter case, Proposition~\ref{torus_product_proposition} implies that either $\eta (F) \ge 
\eta (F') \ge p/6$ or $K$ is isotopic into $\partial M$.

We may therefore assume that $T$ compresses in $N$. 
By Proposition~\ref{compressible_torus_proposition} either $\eta (F')/p>1/12$ or $K$ is isotopic into $T$.
So we may suppose the latter holds. 

First suppose that $N$ is a solid torus. 
Let $K_0$ be the core of $N$. 
Then $K$ is a non-trivial cable of $K_0$. 
The exterior of $K_0$ in $M$ is $X_0$, which is atoroidal.
Since $M$ is hyperbolic $X_0$ cannot be Seifert fibered, and therefore 
$X_0$ is hyperbolic.
By Proposition~\ref{satellite_proposition} $\|K\| \ge 2\|K_0\|$. Hence either we have
conclusion (1) or $|K_0| < 1/804$, in which case $K_0$ is isotopic to the core of a Margulis
tube by Theorem~\ref{hyperbolic_complement_estimate}.

If $N$ is not a solid torus then $X_0$ lies in a 3-ball in $M$.
Since $K$ is isotopic into $T= \partial X_0$, $K$ also lies in a 3-ball,
contradicting our assumption that $M-K$ 
is irreducible.
\end{proof}

It remains to consider toroidal manifolds, and small Seifert fiber spaces.
The small Seifert fiber spaces are treated by the following theorem.
Recall that a {\em prism manifold} is a Seifert fiber space $M$ with base $S^2$ and three
exceptional fibers of multiplicities $2,2,n$. Then $M$ has another Seifert fiber structure
with base $\RP^2$ and at most one exceptional fiber.

\begin{theorem}\label{small_sfs_theorem}
Let $M$ be an atoroidal Seifert fiber space over $S^2$ with three 
exceptional fibers and let $K$ be a knot in $M$. 
Then either
\begin{enumerate}
\item{$\|K\| \ge 1/402$; or}
\item{$K$ is trivial; or}
\item{$K$ is a cable of an exceptional Seifert fiber of $M$; or}
\item{$M$ is a prism manifold and $K$ is a fiber in the Seifert fiber structure of $M$ over $\RP^2$
with at most one exceptional fiber.}
\end{enumerate}
\end{theorem}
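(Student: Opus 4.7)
The plan is to proceed by cases on the geometric type of $M - K$, following the pattern of the earlier theorems. First reduce to $M - K$ irreducible: if reducible, $K$ lies in a $3$-ball of $M$, so $K$ is trivial (case~(2)) or $\|K\| \ge 1/2$.

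If $M - K$ is hyperbolic, Theorem~\ref{hyperbolic_complement_estimate} gives either $\|K\| \ge 1/402$ or $M$ is hyperbolic with $K$ the core of a Margulis tube; since $M$ is Seifert fibered and hence not hyperbolic, we conclude case~(1). If $M - K$ is atoroidal and Seifert fibered, then $K$ is a fiber in some Seifert fibration of $M$ (the Seifert fibration of $M - K$ extending over the Dehn filling on $\partial N(K)$, since $M$ is irreducible forces the meridian slope to differ from the regular fiber slope). As $M$ is an atoroidal SFS over $S^2$ with three exceptional fibers, its only Seifert fibrations are the given one and, when $M$ is a prism manifold, a second one over $\RP^2$ with at most one exceptional fiber. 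A fiber of the $S^2$-fibration---regular or exceptional---is a cable of an exceptional fiber (a regular fiber being a $(q_i,r_i)$-cable of the $i$-th exceptional fiber), giving case~(3); a fiber of the $\RP^2$-fibration of a prism manifold gives case~(4).

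The main case is when $M - K$ is toroidal. Let $T$ be an essential torus in $M - K$. Since $M$ is atoroidal, $T$ compresses in $M$; incompressibility of $T$ in $M-K$ forces every compressing disk to meet $K$, and a standard argument shows that $T$ bounds a solid torus $V$ in $M$ with $K \subset V$, with $T$ incompressible in $V - K$ and in $M - \inte V$. Applying Proposition~\ref{compressible_torus_proposition} to $(V, K)$, with $F := S \cap V$ for a good $p$-Seifert surface $S$ nearly realizing $\|K\|$, yields either $\eta(F)/p > 1/12$ and hence $\|K\| \ge 1/12$ (case~(1)), or $K$ is isotopic into $T$, so $K$ is a cable of the core $K_0$ of $V$. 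To identify $K_0$, note that its exterior $M - \inte V$ has incompressible torus boundary, is irreducible, and is atoroidal; hence it is either hyperbolic or Seifert fibered. In the hyperbolic case, applying Theorem~\ref{hyperbolic_complement_estimate} to $K_0$ (with $M$ not hyperbolic) gives $\|K_0\| \ge 1/402$, and then Proposition~\ref{satellite_proposition} gives $\|K\| \ge \|K_0\| \ge 1/402$ (case~(1)). In the Seifert-fibered case, $K_0$ is a fiber of some SFS of $M$: if $K_0$ is an exceptional fiber of the $S^2$-fibration, $K$ is a cable of an exceptional fiber (case~(3)); if $K_0$ is a regular fiber, $K$ is a cable of a cable of an exceptional fiber, and Corollary~\ref{double_cable_corollary} forces $\|K\| > 1/12$ (case~(1)); and if $K_0$ is a fiber of the $\RP^2$-fibration of a prism manifold, we must argue separately.

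The main obstacle is this last prism-manifold subcase. When $K_0$ is a fiber in the alternate $\RP^2$-Seifert fibration of a prism manifold and $K$ is a non-trivial cable of $K_0$, one must show that $K$ is itself a fiber in the $\RP^2$-fibration (case~(4)), or is also a cable of an exceptional fiber in the $S^2$-fibration (case~(3)), or has $\|K\| \ge 1/12$. This requires a careful computation with the Seifert invariants of the two fibrations and the slopes on the boundary of the twisted $S^1$-bundle over the M\"obius band underlying the $\RP^2$-fibration, to identify which cables of a fiber of this bundle are also cables of an $S^2$-exceptional fiber, or are themselves fibers in the $\RP^2$-fibration.
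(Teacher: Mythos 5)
Your overall strategy --- reduce to $M-K$ irreducible and then split according to whether $M - \inte N(K)$ is hyperbolic, Seifert fibered, or toroidal --- is exactly the paper's, and your treatment of the first two cases matches the paper's proof. Both of the problems lie in the toroidal case. The first is a repairable omission: you take an \emph{arbitrary} essential torus $T$ in $M-K$, show it bounds a solid torus $V$ containing $K$, and then assert that $X_0 = M - \inte V$ is atoroidal. For an arbitrary $T$ this need not hold ($X_0$ may contain further tori that are essential in $M-K$), and without atoroidality you cannot invoke the dichotomy ``hyperbolic or Seifert fibered'' for $X_0$. The paper avoids this by choosing $T$ \emph{extremal}: since $M$ is atoroidal every essential torus in $M-K$ separates, so one may choose $T$ so that the side not containing $K$ is atoroidal (the same device as in the proof of Proposition~\ref{hyperbolic_proposition}). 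With that choice the rest of your analysis of $X_0$ goes through.

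The more serious issue is that your ``main obstacle'' --- the prism-manifold subcase in which $K_0$ is a fiber of the $\RP^2$-fibration --- is left as an acknowledged hole, with only a description of a Seifert-invariant computation you do not carry out. In fact no such computation is needed, and the paper closes this case with tools you already cite. Whichever Seifert fibration of $M$ the fibration of $X_0$ extends to, $K_0$ is a fiber of \emph{that} fibration. If $K_0$ is an ordinary fiber of it, then $K_0$ is a non-trivial cable of an exceptional fiber, so $K$ is a non-trivial cable of a non-trivial cable and Corollary~\ref{double_cable_corollary} gives $\|K\| > 1/12$, i.e.\ conclusion (1). If $K_0$ is the exceptional fiber of the $\RP^2$-fibration, note that this fiber is the core of the fibered solid torus complementary to the twisted $S^1$-bundle over the M\"obius band, and that solid torus (hence its core, up to isotopy) is the same as the fibered neighborhood of the multiplicity-$n$ exceptional fiber of the $S^2(2,2,n)$-fibration; so $K$ is a cable of an exceptional fiber of the given fibration and conclusion (3) holds. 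Thus the case you flag as requiring ``careful computation'' dissolves, but as written your proof is incomplete there.
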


Note that in case~(4) we have $\|K\|=0$. This is a special case of Theorem~\ref{vanishing_theorem} (3).

\begin{proof}
We may assume that $X$ (i.e.\/ $M - \inte N(K)$) is irreducible.

If $X$ is hyperbolic then (1) holds by Theorem~\ref{hyperbolic_complement_estimate}.

If $X$ is Seifert fibered then (since $M$ is irreducible) the Seifert 
fibration of $X$ extends to $M$.
Thus $K$ is a Seifert fiber of this fibration. 
If $M$ is not a prism manifold then the Seifert fiber structure on $M$ is unique and (since an ordinary
fiber is a cable of an exceptional fiber) we have conclusion~(3). If $M$ is a prism manifold then
the two possible Seifert fiber structures on $M$ give either (3) or (4).

We may therefore assume that $X$ is toroidal. 
Since $M$ is irreducible and atoroidal, every torus in $X$ is separating. 
As in the proof of Proposition~\ref{hyperbolic_proposition}, let $T$ be an extremal essential torus 
in $X$, so $M = X_0 \cup_T N$ where $X_0$ is atoroidal. 
Again as in the proof of Proposition~\ref{hyperbolic_proposition}, we may assume that $K$ is isotopic 
into $T$, and that $N$ is a solid torus, with core $K_0$. 
By Proposition~\ref{satellite_proposition}, $\|K\| \ge 2\|K_0\|$. 
If $M-K_0$ is hyperbolic, and (1) does not hold, then $\|K_0\| <1/804$ and 
so $M$ is hyperbolic by Theorem~\ref{hyperbolic_complement_estimate}, a contradiction. 

Hence we may assume that $X_0$ is Seifert fibered. 
The Seifert fibration of $X_0$ extends to $M$, so $K_0$ is a fiber in the 
fibration of $M$, and $K$ is a non-trivial cable of $K_0$.
Since an ordinary fiber is a non-trivial cable of an exceptional fiber, 
by Corollary~\ref{double_cable_corollary} either $\|K\| >1/12$ or $K_0$ is an exceptional fiber.
\end{proof}

The toroidal case involves several subcases, which are treated in the next few propositions,
culminating in Theorem~\ref{toroidal_theorem}.

\begin{proposition}\label{toroidal_atoroidal_proposition}
Let $M$ be an irreducible, toroidal $3$-manifold, and let $K$ be 
a knot in $M$ such that $M-K$ is irreducible and atoroidal. 
Let $F$ be a relative $p$-Seifert surface for $K$. 
Then $-\chi^- (F) \ge p/12$.
\end{proposition}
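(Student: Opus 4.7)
The plan is to locate an essential torus $T\subset M$ that meets $K$ and deploy the intersection-graph machinery of \S~\ref{graph_section}. Pick an essential torus $T$ (one exists since $M$ is toroidal) and isotope $K$ transverse to $T$ so that $n:=|K\cap T|$ is minimal; $n$ is even because $[K]$ is torsion in $H_1(M;\Q)$ and so $K\cdot T=0$. The edge case $n=0$ would force, via atoroidality of $M-K$ together with essentiality of $T$ in $M$ (which rules out compressibility and $\partial M$-parallelism of $T$ in $M-K$), that $T$ is boundary parallel to $\partial N(K)$ in $M-K$; equivalently $K$ is isotopic to the core of a solid torus $V$ with $\partial V=T$. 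Then Proposition~\ref{compressible_torus_proposition} applied to $(V,K)$, using incompressibility of $T$ in $V-K\cong T^2\times I$, yields $-\chi^-(F\cap V)>p/6$ directly unless $K$ is isotopic into $T$; in that residual sub-case I would fall back on the JSJ structure of $M$ and the atoroidality of $W=M-\inte V$ to locate another essential torus of $M$ that actually meets $K$. After this reduction I may assume $n\ge 2$.

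With $n\ge 2$, I form the intersection graphs $\Gamma_F$ on $\widehat F$ and $\Gamma_T$ on $T$. Incompressibility of both surfaces in $X$, together with minimality of $n$, rules out monogons by Lemma~\ref{no_monogons}. Since $T\subset\inte M$ is disjoint from $\partial M$, every edge of $\Gamma_F$ is interior, so there are exactly $pn/2$ edges. The crux is to show that $\Gamma_F$ admits no parallel family of $2n+1$ interior edges. If such a family existed, Lemma~\ref{parallel_edges}(2) would provide five $\Gamma_T$-edges labeled $(i_0,k-i_0)$ between one pair of vertices of $\Gamma_T$. Since two points on a torus admit at most four pairwise non-parallel, pairwise-disjoint embedded arcs (the bound invoked in the proof of Theorem~\ref{lens_theorem}), two of these five arcs cobound a disk in $T$; either they cobound an embedded complementary $\Gamma_T$-bigon outright, or the innermost-disk argument used in the lens-space proof (which invokes Lemma~\ref{parallel_edges}(1) to know that every interior vertex pair is joined by four edges) produces a pair of $(i,k-i)$-edges that are parallel in $\Gamma_T$. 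Either way we get two edges parallel in both graphs, so Lemma~\ref{interior_parallel} forces $(M,K)=(M',K')\#(\RP^3,\RP^1)$; irreducibility of $M$ then forces $M=\RP^3$, which is atoroidal, contradicting the hypothesis that $M$ is toroidal.

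Consequently each parallel family of $\Gamma_F$ contains at most $2n$ edges. Provided $\Gamma_F$ is not a beachball, the reduced graph $\overline\Gamma_F$ has $\bar e\ge (pn/2)/(2n)=p/4$ edges, and Lemma~\ref{reduced_graph_euler_count} yields $-\chi^-(F)\ge\bar e/3\ge p/12$, as desired. A beachball of the second kind would require boundary edges, which $\Gamma_F$ does not have; a beachball of the first kind would, by Remark~\ref{beachball_remark} and Theorem~\ref{vanishing_theorem}(3), place $K$ as a fiber in a twisted $S^1$-bundle subspace $N$ over a M\"obius band orbifold, and repeating the boundary-parallelism analysis of the $n=0$ step on $\partial N$ in place of $T$ forces $M$ to be Seifert fibered over a small base orbifold that still produces essential vertical tori in $M-K$, contradicting atoroidality of $M-K$. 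The principal obstacle is the careful handling of the $n=0$ residual sub-case in which $K$ is isotopic into $T$: one must use the JSJ decomposition of $M$ and atoroidality of $W$ to guarantee that some other essential torus of $M$ meets $K$ so that the combinatorial bound above becomes applicable.
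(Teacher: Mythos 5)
Your combinatorial core is correct and close to the paper's: forbidding a family of $2n+1$ parallel interior edges via Lemma~\ref{parallel_edges}, the four-arc bound on the torus, the innermost-disk argument, and Lemma~\ref{interior_parallel} (with irreducibility of $M$ forcing $M=\RP^3$, which is atoroidal) gives $\overline{e}\ge (pn/2)/(2n)=p/4$, hence $-\chi^-(F)\ge p/12$ by Lemma~\ref{reduced_graph_euler_count}. The paper runs exactly this argument when $n=2$, but for $n\ge 4$ it instead forbids extended $S$-cycles via Lemma~2.3 of \cite{Wu} to get the sharper bound of $n/2+2$ parallel edges; your uniform bound of $2n$ is weaker but still delivers the stated constant. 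Two other points, however, are genuine problems.

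First, the $n=0$ case is vacuous, and your treatment of it rests on an impossible configuration: if $K$ were the core of a solid torus $V$ with $\partial V=T$, a meridian disk of $V$ would compress $T$ in $M$, contradicting essentiality of $T$ in $M$. So an essential torus of $M$ disjoint from $K$ is automatically essential in $M-K$, contradicting atoroidality of $M-K$; one concludes $n>0$ in one line, and your proposed ``fallback on the JSJ structure to locate another essential torus meeting $K$'' is neither needed nor, as written, a proof of anything.

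Second, and more seriously, the beachball-of-the-first-kind case ($F$ an annulus with both boundary components on $\partial N(K)$) is not actually closed. Here $-\chi^-(F)=0$, so this configuration must be shown to be incompatible with the hypotheses, and your claimed contradiction --- that the resulting Seifert fibered structure ``produces essential vertical tori in $M-K$, contradicting atoroidality of $M-K$'' --- runs in the wrong direction. The chain the paper carries out is: the submanifold $N$ provided by Theorem~\ref{vanishing_theorem}~(3) has $N-K$ toroidal, so atoroidality of $M-K$ forces $\partial N$ to compress in $W=\overline{M-N}$; irreducibility makes $W$ a solid torus whose meridian is not a fiber of $\partial N$; hence $M-\inte N(K)$ is a Seifert fiber space over the M\"obius band, which atoroidality forces to have no exceptional fiber; hence $M$ is a Seifert fiber space over $\RP^2$ with at most one exceptional fiber. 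Such an $M$ is atoroidal, and the contradiction is with the toroidality of $M$, not of $M-K$. Without this analysis the annular case remains open, and it is precisely the case in which the numerical conclusion would otherwise fail.
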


\begin{proof}
We may suppose that $F$ is good.

Let $\widehat T$ be an essential torus in $M$ such that 
$n= |K\cap \widehat T|$ is minimal (over all essential tori in $M$).
Note that $n>0$ since $M-K$ is atoroidal.
Also, the existence of $F$ shows that $p[K] =0\in H_1(M,\partial M)$, 
which implies that $n$ is even. 

Let $T$ be the punctured torus $\widehat T\cap (M- \inte N(K))$ and let 
$\Gamma_F$, $\Gamma_T$ be the intersection graphs in $\widehat F$ and 
$\widehat T$ respectively. 
Note that all edges are interior edges.

We need some sublemmas.

\begin{sublemma}\label{lem10}
If $n\ge 4$ then $\Gamma_F$ contains no family of more than 
$(n/2+2)$ parallel edges. 
\end{sublemma}

\begin{proof} 
Since all vertices of $\Gamma_F$ are of the same sign, a family of more
than $(n/2 +2)$ parallel edges would contain an extended $S$-cycle.
But this is impossible, again by Lemma~2.3 from \cite{Wu} (compare with Case~A in the proof of Theorem~\ref{reducible_theorem}).
\end{proof}

\begin{sublemma}\label{lem11}
If $n=2$ then $\Gamma_F$ contains no family of $5$ parallel edges.
\end{sublemma}

\begin{proof} 
The condition $n=2$ implies that $\Gamma_T$ has $2$ vertices. Recall from the proof of
Theorem~\ref{lens_theorem} that on a torus one can find at most four embedded pairwise non-parallel
arcs joining two points, that are disjoint except at their endpoints.
Consequently if $n=2$ and $\Gamma_F$ contains a family of $5$ parallel edges, at least two of these
edges are also parallel in $\Gamma_T$, so Lemma~\ref{interior_parallel} implies that
$M=\RP^3$, contrary to the assumption that $M$ is toroidal.
\end{proof}

We now complete the proof of Proposition~\ref{toroidal_atoroidal_proposition}.
If $F$ is not an annulus, let $\overline{e}$ be the number of edges in the reduced graph $\overline{\Gamma}_F$.
If $n\ge 4$ then Sublemma~\ref{lem10} shows that $\overline{e}\ge pn/(n+4) \ge p/2$.
If $n=2$ Sublemma~\ref{lem11} shows that 
$\overline{e}\ge 2p/8 = p/4$.
Hence $-\chi^- (F)\ge p/12$ by 
Lemma~\ref{reduced_graph_euler_count}.

If $F$ is an annulus then by Theorem~\ref{vanishing_theorem} (3)
$K$ is contained as a fiber of multiplicity $r$ in a Seifert fiber 
submanifold $N$ of $M$, where $N$ has base orbifold a M\"obius band 
with one orbifold point of order $r\ge 1$. 
In particular $N-K$ is toroidal.
Since $M-K$ is atoroidal by hypothesis, $\partial N$ must be compressible 
in $W= \overline{M-N}$. 
Since $M-K$ is irreducible it follows that $W$ is a solid torus, and that 
the meridian of $W$ is not identified  with a fiber in $\partial N$. 
Hence $M- \inte N(K)$ is a Seifert fiber space over the M\"obius band
with at most one exceptional fiber (the core of $W$).
Since this manifold is atoroidal by hypothesis, 
the core of $W$ is in fact an ordinary fiber.
Then $M$ is a Seifert fiber space over $\RP^2$ with at most 
one exceptional fiber, namely $K$. 
But this contradicts our assumption that $M$ is toroidal. 
\end{proof}

\begin{proposition}\label{annular_proposition}
Let $M$ be an irreducible $3$-manifold containing an essential 
annulus $A$. Let $K$ be a knot in $M$ and let $F$ be a good relative
$p$-Seifert surface for $K$. Then either 
\begin{enumerate}
\item{$-\chi^- (F) \ge p/6$; or}
\item{$K$ can be isotoped to be either disjoint from $A$ or to lie 
in $\partial M$; or}
\item{$F$ is an annulus.}
\end{enumerate}
\end{proposition}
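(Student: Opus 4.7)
The argument will parallel Proposition~\ref{compressible_torus_proposition}, with the compressing disk replaced by the essential annulus $A$. I first assume $F$ is good, then isotope $A$ so that $A\cap N(K)$ is a union of $n$ meridian disks of $N(K)$, with $n$ minimal over essential annuli in the isotopy class of $A$. If $n=0$, conclusion (2) holds; otherwise I set $\til A = A\cap X$ and build the intersection graphs $\Gamma_F$ on $\widehat F$ and $\Gamma_A$ on $A$ as in Section~\ref{graph_section}. By Lemma~\ref{no_monogons} neither graph has monogons, and the edges of $\Gamma_F$ satisfy $2e_i + e_\partial = pn$.

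For $n=1$, the surface $\til A$ is a pair of pants with a single $\partial N(K)$-vertex, so any pair of parallel boundary edges of $\Gamma_F$ is automatically parallel in $\Gamma_A$ (an outermost such pair at that vertex cobounds a bigon in $A$); Lemma~\ref{boundary_parallel} then yields (2). If $\Gamma_F$ is a beachball of the second kind with a single edge, then $F$ itself is an annulus and (3) holds, while a beachball of the first kind is ruled out by Remark~\ref{beachball_remark} together with irreducibility of $M-K$. Otherwise $\bar\Gamma_F$ exists with $\bar e\ge p$, and Lemma~\ref{reduced_graph_euler_count} gives $-\chi^-(F)\ge p/3$.

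For $n\ge 2$, I establish two sublemmas modeled on those in Proposition~\ref{compressible_torus_proposition}. The first asserts that $\Gamma_F$ contains no family of $\lfloor n/2\rfloor+1$ parallel interior edges: such a family contains a Scharlemann cycle, whose bounding disk $E\subset F$ together with two consecutive meridian disks $D_i,D_{i+1}$ of $A\cap N(K)$ can be used to band $A$, producing an essential annulus $A'$ properly isotopic to $A$ with $|A'\cap K| = n-2$, contradicting minimality. The second asserts that $\Gamma_F$ contains no family of $2n-1$ parallel boundary edges unless $K$ is isotopic into $\partial M$: by a pigeonhole on the $\partial N(K)$-labels, any such family forces an outermost pair of boundary edges in $\Gamma_A$ to be parallel in $A$ as well, and Lemma~\ref{boundary_parallel} then applies. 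Ruling out beachballs as before, each edge of $\bar\Gamma_F$ absorbs at most $\max(\lfloor n/2\rfloor,\,2n-2) = 2n-2$ edges of $\Gamma_F$, so $\bar e\ge pn/(2n-2)\ge p/2$, and Lemma~\ref{reduced_graph_euler_count} gives $-\chi^-(F)\ge p/6$, which is conclusion (1).

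The main obstacle I anticipate is verifying the first sublemma: showing that a Scharlemann cycle of $\Gamma_F$ permits a modification of $A$ reducing $n$ while preserving its isotopy class. For a compressing disk this was a routine tube-and-compress move (as in Proposition~4.7 of \cite{Scharlemann_smooth}); for an annulus one must instead band $A$ along the meridional disk obtained by tubing $D_i\cup D_{i+1}$ through the Scharlemann disk $E\subset F$, and then check that the result is again an essential annulus in the same isotopy class rather than, say, a one-sided surface or an inessential annulus. Irreducibility of $M$ and incompressibility of $A$ in $M$ are both essential for this verification.
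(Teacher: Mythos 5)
Your overall framework---the intersection graphs, the monogon-free setup, the beachball cases giving conclusion~(3), and the final count $\overline{e}\ge pn/(2n-2)\ge p/2$ followed by Lemma~\ref{reduced_graph_euler_count}---is the same as the paper's. Your boundary-edge sublemma is also close in spirit to the paper's Sublemma~\ref{lem13}, but note that the paper works with $2n+1$ rather than $2n-1$ parallel boundary edges: in an annulus, a pair of boundary edges of $\Gamma_P$ sharing a vertex may form an \emph{essential spanning arc} of $A$ and cut off no disk, so the ``outermost pair'' argument that works for a compressing disk does not transfer. With $2n+1$ edges one gets three boundary edges at a common vertex of $\Gamma_P$, two of which must be parallel in $A$, and an innermost-vertex argument then produces a pair parallel in both graphs. (The same issue affects your $n=1$ case, where the two boundary edges at the unique vertex may run to different components of $\partial A$.) These points are repairable and do not change the final estimate.

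The genuine gap is your first sublemma, and it is exactly the one you flagged without resolving. The Scharlemann-cycle ``band'' move is not available here: even granting that tubing $A$ to itself along the $1$-handle of $N(K)$ between consecutive meridian disks and compressing along the bigon yields an orientable surface with $\chi=0$ and boundary $\partial A$, nothing guarantees that it is a single annulus, that it is essential, or---fatally---that it is isotopic to $A$. Since you minimized $n$ over the isotopy class of $A$ (as you must, because conclusion~(2) refers to the given $A$), producing a non-isotopic essential annulus meeting $K$ in $n-2$ points contradicts nothing; and minimizing over all essential annuli instead would lose conclusion~(2). The paper avoids Scharlemann cycles entirely: if $\Gamma_F$ has $n+1$ parallel interior edges, Lemma~\ref{parallel_edges} gives three $(i_0,k-i_0)$-edges in $\Gamma_P$; since an annulus contains only one isotopy class of essential embedded loop, two of these are parallel in $A$, and an innermost argument inside the disk realizing that parallelism yields a pair of edges parallel in \emph{both} graphs; Lemma~\ref{interior_parallel} then gives $(M,K)=(M',K')\#(\RP^3,\RP^1)$, impossible since $M$ is irreducible with non-empty boundary. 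The resulting bound (at most $n$, not $\lfloor n/2\rfloor$, parallel interior edges) is weaker than what you claimed but still gives $\overline{e}\ge pn/2n=p/2$ and hence $-\chi^-(F)\ge p/6$. You should replace your Sublemma~1 with this argument.
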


\begin{proof} 
We may assume that $F$ is good. 
Let $X= M - \inte N(K)$.

Isotop $K$ so that $n= |K\cap A|$ is minimal.
If $n=0$ we are done, so assume in what follows that $n>0$.
Let $P = A\cap X$, and let $\Gamma_F$, $\Gamma_P$ be the intersection 
graphs in $\widehat F$ and $A$ respectively.

\begin{sublemma}\label{lem12}
$\Gamma_F$ contains no family of $(n+1)$ parallel interior edges.
\end{sublemma}

\begin{proof}
By Lemma~\ref{parallel_edges} bullet~(2), $\Gamma_P$ has 3 $(i_0,k-i_0)$-edges for some 
$i_0,k$. Two of these must be parallel in $A$ (since the union of any two is an embedded loop, and there is
only one isotopy class of embedded essential loops in an annulus).
If they are parallel in $\Gamma_P$ then we have interior edges parallel 
in both graphs and Lemma~\ref{interior_parallel} readily gives a contradiction.
If not, then the disk $D$ in $A$ realizing the parallelism must have 
vertices of $\Gamma_P$ in its interior.
By Lemma~\ref{parallel_edges} bullet~(1) these vertices
come in pairs $i$, $k-i$, each with $2$ edges 
joining them. By taking an innermost such pair in $D$ we again get a pair of edges
that are parallel in both graphs, a contradiction as 
before.
\end{proof}

\begin{sublemma}\label{lem13} 
If $\Gamma_F$ contains a family of $(2n+1)$ parallel boundary edges 
then $K$ is isotopic into $\partial M$.
\end{sublemma}

\begin{proof}
At the vertex end of such a family every label in $\{1,2,\ldots,n\}$ 
appears twice, and one label appears three times.
The three corresponding boundary edges in $\Gamma_P$ all share a common vertex;
at least two must be parallel in $A$. If they are parallel in $\Gamma_P$, we get
boundary edges parallel in both graphs, so the result follows from Lemma~\ref{boundary_parallel}.
If not, then the disk $D$ in $A$ realizing the parallelism must have vertices
of $\Gamma_P$ in its interior. Each such vertex has two boundary edges coming from the given family;
an innermost pair is parallel in both graphs, so we are done by Lemma~\ref{boundary_parallel}.
\end{proof}

We now complete the proof of Proposition~\ref{annular_proposition}.
If $F$ is an annulus we are done. If not, then the reduced graph $\overline{\Gamma}_F$ exists.
Let $\overline{e}_i,\overline{e}_\partial$ be the number of interior and boundary 
edges of $\overline{\Gamma}_F$. 
By Sublemma~\ref{lem12} each interior edge of $\overline{\Gamma}_F$ contributes 
at most $2n$ to the sum of the valences of the vertices on $\Gamma_F$, 
and by Sublemma~\ref{lem13} we may assume that each boundary edge of 
$\overline{\Gamma}_F$ also contributes at most $2n$ to this sum.
Hence $\overline{e} = \overline{e}_i + \overline{e}_\partial \ge pn/2n =p/2$, and the estimate
in bullet~(1) follows from Lemma~\ref{reduced_graph_euler_count}.
\end{proof}

In the next proposition we consider the orientable Seifert fiber spaces $D^2(2)$, 
$M^2(0)$, $A^2(1)$ and $P^2(0)$, where $F^2(n)$ denotes a Seifert fiber 
space with base surface $F^2$ and $n$ exceptional fibers, and $D^2$, $M^2$, 
$A^2$, $P^2$ are the disk, M\"obius band, annulus, and pair of pants, 
respectively. These, together with $S^1 \times D^2$ and $T^2 \times I$, are precisely the
atoroidal Seifert fiber spaces with non-empty boundary.

\begin{remark}
This notation does not uniquely determine the space up to homeomorphism, since we do
not specify the type of singular fiber.
\end{remark}

\begin{proposition}\label{special_Seifert_proposition}
Let $M$ be an atoroidal Seifert fiber space of the kind $D^2(2)$, $M^2(0)$, 
$A^2(1)$ or $P^2(0)$, and let $K$ be a knot in $M$.
Let $F$ be a relative $p$-Seifert surface for $K$.
Then either 
\begin{enumerate}
\item{$-\chi^- (F)\ge p/6$; or}
\item{$K$ is isotopic into $\partial M$; or}
\item{$K$ is isotopic to a cable of an exceptional Seifert fiber.}
\end{enumerate}
\end{proposition}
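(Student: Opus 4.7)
The strategy is to apply Proposition~\ref{annular_proposition} to an essential vertical annulus $A\subset M$. Such an $A$ exists in each of the four cases, since the base orbifold admits an essential properly embedded arc: one separating the two cone points in $D^2(2)$; one cutting the M\"obius band to a disk in $M^2(0)$; one separating the cone point from one boundary in $A^2(1)$; and in $P^2(0)$, either one running between two distinct boundary components or one separating the other two. The pieces of $M\setminus A$ are, in every case, either fibered solid tori (with cores being fibers of $M$) or copies of $T^2\times I$.

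The annular proposition gives three cases. If $-\chi^-(F)\ge p/6$, conclusion~(1) holds. If $F$ is an annulus, then either a boundary component of $F$ lies on $\partial M$ (and $F$ directly furnishes an isotopy of $K$ into $\partial M$, giving conclusion~(2)), or both boundary components lie on $\partial N(K)$, in which case $F$ is an honest $p$-Seifert surface with $\|K\|=0$; the proof of Theorem~\ref{vanishing_theorem} then shows $K$ is a fiber of multiplicity $r\ge 1$ in a Seifert fiber subspace $N$ of $M$ whose base orbifold is a M\"obius band with one cone point of order $r$. Comparing this subspace with the atoroidal Seifert structure on $M$ forces $K$ either into $\partial M$ or to coincide up to isotopy with an exceptional fiber of $M$ (a trivial cable of itself).

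If instead $K$ can be isotoped off $A$, it lies in a single piece $P$ of $M\setminus A$, and (after discarding disk components) $F':=F\cap P$ is a relative $p$-Seifert surface for $K$ in $P$ with $-\chi^-(F')\le -\chi^-(F)$. If $P\cong T^2\times I$, Proposition~\ref{torus_product_proposition} yields either $-\chi^-(F')\ge p/3$, or an isotopy of $K$ into the central torus of $P$; this torus is a torus in $M$ which, by atoroidality, must be boundary-parallel, so $K$ isotopes into $\partial M$. If $P$ is a fibered solid torus with core a fiber $K_0$ of $M$, then (assuming $\partial P$ is incompressible in $P-K$, the degenerate case of $K$ lying in a ball being handled separately) Proposition~\ref{compressible_torus_proposition} yields either $-\chi^-(F')>p/6$, or an isotopy of $K$ into $\partial P$, exhibiting $K$ as a cable of $K_0$. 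If $K_0$ is an exceptional fiber of $M$, we obtain conclusion~(3); if $K_0$ is a regular fiber, then $\partial P$ is isotopic to a component of $\partial M$ through a $T^2\times I$ collar (obtained by extending the base arc from $\pi(K_0)$ to $\partial\B$), so $K$ isotopes into $\partial M$.

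The main technical obstacle is the case-by-case bookkeeping: one must match each of the four types of $M$ with the combinatorial types of its cutting pieces, determine whether each solid-torus core is an exceptional or regular fiber of $M$, and handle degenerate spaces like $M^2(0)$ (which has no exceptional fibers in its standard fibration) by invoking the alternative Seifert fibration with base $D^2(2,2)$, under which the core of the split solid torus becomes an exceptional fiber and conclusion~(3) becomes meaningful.
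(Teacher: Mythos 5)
Your overall strategy --- cut along an essential vertical annulus, apply Proposition~\ref{annular_proposition}, and then run Propositions~\ref{torus_product_proposition} and~\ref{compressible_torus_proposition} on the complementary pieces --- is exactly the paper's, but two of your individual steps are false as stated. First, when $F$ is an annulus with one boundary component on $\partial M$ and one on $\partial N(K)$, the latter component represents $p[K]$ in $H_1(N(K))$, so for $p>1$ the annulus does \emph{not} furnish an isotopy of $K$ into $\partial M$: it only isotopes a $p$-fold cable of $K$ into $\partial M$. (Concretely, take $M=D^2(2)$ to be the trefoil exterior and $K$ the exceptional fiber of order $2$; the vertical annulus running to $\partial M$ is a relative $2$-Seifert annulus, but $K$ is not isotopic into $\partial M$.) The paper handles this case by setting $V=N(K)\cup N(F)$ and letting $B$ be the frontier annulus of $V$, which has some winding number $r\ge 1$ in the solid torus $V$; if $r=1$ one gets conclusion~(2), while if $r>1$ the separating annulus $B$ is essential, hence vertical, so $K$ is an exceptional fiber and conclusion~(3) holds. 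Your argument routes this case to the wrong conclusion by an invalid isotopy.

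Second, in the solid-torus piece when the core $K_0$ is a regular fiber, you assert that the cable $K$ of $K_0$ isotopes into $\partial M$. A \emph{nontrivial} cable of a regular fiber is not isotopic into $\partial M$ (its class in any incompressible torus would be a proper power, hence not embeddable as an essential curve), and the ``$T^2\times I$ collar'' you invoke does not exist: the boundary of a fibered neighborhood of a regular fiber is a compressible vertical torus lying over an inessential circle in the base, not a torus parallel to $\partial M$. The paper's fix is to observe that $K_0$ lies on a vertical \emph{boundary-parallel} torus $T$, so that $K$ lies in a regular neighborhood of $T$ and Corollary~\ref{torus_corollary} applies; this yields the additional alternative $-\chi^-(F)\ge p/3$ (conclusion~(1)) alongside the possibility that $K$ is isotopic into $T$ and hence into $\partial M$. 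Your version omits this alternative and so claims an isotopy that can fail. Both gaps land in situations where some conclusion of the proposition still holds, but not the one your argument produces, so the proof as written is not correct without these repairs.
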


\begin{proof}
We may assume that $F$ is good and $M-K$ is irreducible.

First assume that $F$ is not an annulus. 
Let $A$ be an essential annulus in $M$ such that $M$ cut along $A$ is two 
fibered solid tori if $M= D^2 (2)$, one fibered solid torus if 
$M= M^2(0)$ or $A^2(1)$, and $T^2 \times I$ if $M= P^2(0)$.
By Proposition~\ref{annular_proposition} either bullet~(1) or (2) holds, or $K$ can be isotoped 
to be disjoint from $A$.

If $M=P^2(0)$ and $K$ can be isotoped to be disjoint from $A$, then $K \subset T \times I$ where $T$
is a boundary component of $M$. Therefore bullet~(1) or (2) holds, by Corollary~\ref{torus_corollary}.

If $M$ is one of $D^2(2)$, $M^2(0)$ or $A^2(1)$ and $K$ can be isotoped to be disjoint from $A$,
then $K$ is contained in a fibered solid torus $V\subset M$. 
Let $F' = F\cap V$ and $F'' = F\cap (M- \inte V)$. By hypothesis $M-K$ is irreducible, so
$\partial V$ is incompressible in $M- \inte N(K)$. Therefore we can assume that 
$F''$ has no disk (or sphere) components. 
Hence $-\chi^- (F) \ge -\chi^- (F')$. 
Therefore by Proposition~\ref{compressible_torus_proposition} either $-\chi^- (F) >p/6$ or $K$ is 
a cable of $K_0$, the core of $V$. 
If $K_0$ is an ordinary fiber then it lies in a vertical incompressible 
torus $T$, so by Corollary~\ref{torus_corollary} either bullet~(1) holds or 
$K$ is isotopic to $K_0$, in which case bullet~(2) holds. 
If $K_0$ is an exceptional fiber we get conclusion (3).

Finally we consider the case where $F$ is an annulus, i.e. $\Gamma_F$ is 
a beachball.

If $\Gamma_F$ is a beachball of the first kind then (see Theorem~\ref{vanishing_theorem} (3)) 
$K$ is contained in a submanifold $N$ of $M$ of the form $M^2(0)$ or 
$M^2(1)$, as an ordinary or exceptional fiber respectively. 
Since $M$ is irreducible, atoroidal, and has non-empty boundary, the 
torus $\partial N$ is boundary parallel in $M$, and so $M\cong N$. 
Therefore $N$ is of the form $M^2(0)$, and $K$ is isotopic into $\partial M$.

If $\Gamma_F$ is a beachball of the second kind, then $M= V\cup_B W$, where 
$V$ is a solid torus, $B$ is an annulus with winding number $r\ge1$ in $V$,
and $K$ is a core of $V$.
If $r=1$ then $K$ is isotopic into $\partial M$. 
If $r>1$ then the form of  $M$ implies that the (separating) annulus $B$ 
is essential in $M$, and hence is vertical.
Therefore $K$ is an exceptional fiber in $M$.
\end{proof} 

We are finally ready to treat the case that $M$ is toroidal.

\begin{theorem}\label{toroidal_theorem}
Let $M$ be a closed, irreducible, toroidal 3-manifold, and let $K$ be a 
knot in $M$. 
Then either 
\begin{enumerate}
\item{$\|K\| \ge 1/402$; or}
\item{$K$ is trivial; or}
\item{$K$ is contained in a hyperbolic piece $N$ of the 
JSJ decomposition of $M$ and is isotopic either to a cable of a core of a 
Margulis tube or into a component of $\partial N$; or}
\item{$K$ is contained in a Seifert fiber piece $N$ of the 
JSJ decomposition of $M$ and either
\begin{itemize}
\item[(A)]{$K$ is isotopic to an ordinary fiber or a cable of an exceptional fiber or into $\partial N$, or}
\item[(B)]{$N$ contains a copy $Q$ of the twisted $S^1$ bundle over the M\"obius band and $K$ is contained
in $Q$ as a fiber in this bundle structure;}
\end{itemize}
or}\label{Seifert_case}
\item{$M$ is a $T^2$-bundle over $S^1$ with Anosov monodromy and $K$ is contained in a fiber.}\label{torus_case}
\end{enumerate}
\end{theorem}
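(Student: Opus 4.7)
I will assume $\|K\| < 1/402$ with $K$ nontrivial, fix a good $p$-Seifert surface $F$ with $\eta(F)/p < 1/402$, and show that $K$ falls into one of cases~(3), (4), or~(5). Using Theorem~\ref{reducible_theorem} I first reduce to $M-K$ irreducible, and then Proposition~\ref{toroidal_atoroidal_proposition} forces $M-K$ to be toroidal (else $\eta(F)/p \ge 1/12$). Following the extremal-torus strategy used in Proposition~\ref{hyperbolic_proposition} and Theorem~\ref{small_sfs_theorem}, I choose an essential torus $T \subset M-K$ whose complementary component $X_0$ not containing $K$ is atoroidal, and set $N = \overline{M-X_0}$, so that $K \subset N$.

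I split according to whether $T$ is essential in $M$. If $T$ is a JSJ torus of $M$, then by iterating the extremal-torus procedure as long as possible I arrange that $K$ lies in the interior of a single JSJ piece $N$. When $N$ is hyperbolic, Proposition~\ref{hyperbolic_proposition} applied to $(N,K)$ produces case~(3). When $N$ is Seifert fibered, I combine a suite of relative results from \S~\ref{p_Seifert_section}--\S~\ref{graph_section}: Proposition~\ref{prop2.24} controls interactions with vertical essential tori, Proposition~\ref{special_Seifert_proposition} treats atoroidal SFS of type $D^2(2)$, $M^2(0)$, $A^2(1)$, $P^2(0)$, Proposition~\ref{annular_proposition} handles essential annuli, and Corollary~\ref{torus_corollary} treats $K$ near $\partial N$. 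These together place $K$ as an ordinary fiber, a cable of an exceptional fiber, a knot isotopic into $\partial N$, or a fiber in a twisted $S^1$-bundle subpiece $Q$, giving case~(4A) or~(4B).

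If instead $T$ is compressible in $M$, then since $M$ is irreducible one side of $T$ is a solid torus. Applying Proposition~\ref{compressible_torus_proposition} to $(N, K, F \cap N)$ produces either $\eta(F)/p > 1/12$ (a contradiction) or $K$ is isotopic into $T$, so $K$ is realized as a cable of the core $K_0$ of the solid-torus region. Proposition~\ref{satellite_proposition} then bounds $\|K_0\| \le \|K\|/k$ where $k$ is the winding number, and Corollary~\ref{double_cable_corollary} limits the depth of nested cabling to at most one nontrivial layer. Iterating the extremal-torus analysis on $K_0$ eventually places $K_0$ inside a JSJ piece, reducing to the previous paragraph. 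Case~(5) is identified as the special configuration where this process terminates with $N = T^2 \times I$ having both boundary tori identified in $M$, i.e. $M$ is a Sol $T^2$-bundle with Anosov monodromy, and Theorem~\ref{torus_bundle_theorem} confirms that knots in fibers genuinely achieve arbitrarily small rational genus in that setting.

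The main obstacle is the exhaustive analysis inside the Seifert fibered JSJ piece. The diversity of possible Seifert structures (parameterized by base orbifold and exceptional fibers) combined with the possible positions of $K$ (horizontal versus vertical, near a boundary component, or embedded inside a twisted $S^1$-bundle subpiece producing case~(4B)) requires careful combinatorial bookkeeping built on the intersection-graph machinery of \S~\ref{graph_section}. A secondary subtlety is the delineation between case~(4A) and case~(5): knots in fibers of Sol $T^2$-bundles technically lie in $\partial N$ of the unique JSJ piece $N = T^2 \times I$ and so fit under case~(4A), but are called out separately in~(5) because the global structure of $M$ differs in an essential way.
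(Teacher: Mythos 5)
Your overall strategy---cut $M-K$ along essential tori, analyze the piece containing $K$, and iterate through cables---is the right one and matches the paper in outline, and you cite essentially the correct supporting propositions. But there are genuine gaps. First, the single ``extremal torus'' device does not transfer from the atoroidal setting of Proposition~\ref{hyperbolic_proposition} and Theorem~\ref{small_sfs_theorem} to the toroidal one: when $M$ is toroidal, essential tori in $M-K$ need not separate $M$, so ``the complementary component not containing $K$'' may not exist, and even when it does, making that component atoroidal says nothing about the component containing $K$. The paper instead cuts along a \emph{maximal} disjoint union $\T$ of non-parallel essential tori in $M-K$, which is exactly what guarantees that the piece $N$ containing $K$ has $N-K$ atoroidal, so that $N$ is either toroidal (handled by Proposition~\ref{toroidal_atoroidal_proposition}), hyperbolic, or one of the finitely many atoroidal Seifert fibered pieces with boundary. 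Your claim that iterating the procedure places $K$ in the interior of a JSJ piece of $M$ is false in general: the tori of $\T$ are essential in $M-K$ but may be compressible or boundary-parallel in $M$, and the pieces $N=S^1\times D^2$ and $N=T^2\times I$ --- which are not JSJ pieces --- are precisely where the interesting conclusions arise.

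Second, the case $N=T^2\times I$ is where conclusion~(5) comes from, and your treatment of it skips the essential step. After Proposition~\ref{torus_product_proposition} isotopes $K$ onto the middle torus $T$, one must classify how the incompressible torus $T$ sits in $M$: as a JSJ torus, as a vertical essential torus in a Seifert piece (handled by Proposition~\ref{prop2.24}), as a horizontal torus in a Seifert fibration (which requires the re-fibering argument with twisted $I$-bundles over the Klein bottle), or as a fiber of a $T^2$-bundle over $S^1$ (where Remark~\ref{torus_bundle_remark} and Theorem~\ref{torus_bundle_theorem} yield either $\|K\|\ge 1/8$, or $\|K\|=0$, or Cases~G/H, the last being conclusion~(5)). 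Your description of~(5) as ``both boundary tori identified'' gestures at this but omits the horizontal-torus subcase entirely. Finally, in the compressible case ``one side of $T$ is a solid torus'' is not automatic in an irreducible $M$ ($T$ could lie in a ball), and when $K$ is a nontrivial cable of $K_0$ you must explain how each possible conclusion for $K_0$ transfers back to a conclusion for $K$; the paper does this by ruling out nested nontrivial cables via Proposition~\ref{compressible_torus_proposition} and ruling out nontrivial cables of curves in incompressible tori via Proposition~\ref{torus_product_proposition}, neither of which appears in your write-up.
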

\begin{remark}
If $K$ is disjoint from the hyperbolic pieces in the JSJ decomposition of $M$, the constant
$1/402$ can be improved to $1/24$.
\end{remark}

\begin{remark}
In Case~(4)(B), $\|K\|=0$ (see Theorem~\ref{vanishing_theorem} (3)). Also, the Seifert fibration of $Q$ induced from $N$
may be the one with base orbifold a disk with two cone points of order 2, in which case $K$ is not a
Seifert fiber in $N$.
\end{remark}

\begin{remark}
Strictly speaking, (5) is a special case of (4) where the Seifert fiber piece $N$ is $T^2 \times I$, but
for clarity we list it separately.
\end{remark}

\begin{proof}
We may assume that $M-K$ is irreducible. 
By Proposition~\ref{toroidal_atoroidal_proposition}, either $\|K\|\ge 1/24$
or $M-K$ is toroidal. So we may assume that $M-K$ is irreducible and toroidal.

Let $\T$ be a maximal disjoint union of non-parallel essential tori in $M-K$ (note that $\T$ is nonempty).
Let $N$ be the component of $M$ cut along $\T$ that contains $K$.
Then $N$ is irreducible, has boundary a non-empty disjoint union of tori, 
and $N-K$ is irreducible and atoroidal.

Let $S$ be a good $p$-Seifert surface for $K$ in $M$.
Then $F= S\cap N$ is a relative $p$-Seifert surface for $K$ in $N$.
Let $S_0 = \overline{S-F}$.
Since $\partial N$ is incompressible in $M-K$ we may assume that no 
component of $S_0$ or $F$ is a disk (or sphere). 
Hence $\eta (S) = \eta (F) + \eta (S_0)$ and so $\eta (S) \ge \eta (F)$.

If $N$ is toroidal then we are done by Proposition~\ref{toroidal_atoroidal_proposition}.
If $N$ is atoroidal then $N$ is either hyperbolic or Seifert fibered. 
In the former case, $N$ is a piece of the JSJ decomposition of $M$, and 
by Proposition~\ref{hyperbolic_proposition},
either (1) or (3) holds. 

Suppose $N$ is a Seifert fiber space. 
Since $N$ is atoroidal and has non-empty boundary it is either 
homeomorphic to $S^1\times D^2$ or $T^2 \times I$, or has Seifert fiber 
structure $D^2(2)$, $M^2 (0)$, $A^2(1)$ or $P^2(0)$. 
In the last four cases the result follows from Proposition~\ref{special_Seifert_proposition}
(to get conclusion (4) of the theorem we replace the $N$ considered here 
with the Seifert fiber piece of the JSJ decomposition of $M$ that 
contains it).

If $N = T^2\times I$ then by Proposition~\ref{torus_product_proposition} either (1) holds 
or $K$ is isotopic onto $T = T^2 \times \{1/2\}$. 
Since $T$ is an incompressible torus in $M$ there are four possibilities:
\begin{itemize}
\item[(i)] $T$ is a torus in the JSJ decomposition of $M$;
\item[(ii)] $T$ is a vertical essential torus in a Seifert fiber 
piece of the JSJ decomposition of $M$;
\item[(iii)] $M$ is a Seifert fiber space and $T$ is horizontal;
\item[(iv)] $M$ is a $T^2$-bundle over $S^1$ and $T$ is a fiber.
\end{itemize}

If (i) holds we are done. If (ii) holds we are done by Proposition~\ref{prop2.24}.

Suppose (iii) holds but not (iv). 
Then $T$ separates $M$ into two twisted $I$-bundles over the Klein bottle.
The Seifert fibering of $M$ must have Euler number~0, and $M$ is either 
a twisted $S^1$-bundle over the Klein bottle or has base orbifold 
$\RP^2$ with two orbifold points of order~2. 
In both cases $M$ has an isomorphic (but non-isotopic) Seifert fibering
in which $T$ is vertical.

If (iv) holds then by Remark~\ref{torus_bundle_remark}
either $\|K\|=0$, or $\|K\| \ge 1/8$, or we are in Case~G or Case~H of \S~\ref{torus_bundles_subsection}.
In Case~G, $M$ is Seifert fibered and $K$ is a fiber.
Case~H is conclusion (5). 
So suppose $\|K\|=0$.
Then by part~(2) of Theorem~\ref{torus_bundle_theorem}
$\left(\begin{smallmatrix} \alpha&\beta\\ \gamma&\delta\end{smallmatrix}
\right) = \left(\begin{smallmatrix} -1&n\\ 0&-1\end{smallmatrix}\right)$,
with $K$ representing the first element of the corresponding basis.
Thus $M$ also has the structure of an $S^1$-bundle over the Klein bottle
with Euler number $n$, where $K$ is a fiber. 

\smallskip

Finally, suppose $N = S^1\times D^2$. 
Then by Proposition~\ref{compressible_torus_proposition}, and the fact that $\T$ is essential in 
$M-K$, we may assume that $K$ is a non-trivial cable of $K_0$, the core of $N$.
By Proposition~\ref{satellite_proposition}, $\|K_0\| < \|K\|$. 

Now repeat the whole argument with $K_0$ in place of $K$. 
We conclude that either (1) holds or 
\begin{itemize}
\item[(a)]{$K_0$ is contained in a  hyperbolic piece $N_0$ of the 
JSJ decomposition of $M$ and is isotopic either to a cable of a core of 
a Margulis tube or into $\partial N_0$; or}
\item[(b)]{$K_0$ is contained in a Seifert fiber piece $N_0$ of the JSJ decomposition of $M$ and is
isotopic either to an ordinary fiber or a cable of an exceptional fiber of $N_0$ or into $\partial N_0$,
or $N_0$ contains a copy $Q_0$ of the twisted $S^1$ bundle over the M\"obius band and $K$ is
contained in $Q_0$ as an $S^1$ fiber; or}
\item[(c)]{$M$ is a $T^2$-bundle over $S^1$ and $K_0$ lies in a fiber; or}
\item[(d)]{$K_0$ is a non-trivial cable of a knot $K_1$.}
\end{itemize}

By Proposition~\ref{compressible_torus_proposition}, $K$ cannot be a non-trivial cable of a 
non-trivial cable. 
Similarly, by Proposition~\ref{torus_product_proposition}, $K$ cannot be a non-trivial cable 
of an essential curve in an incompressible torus in $M$. This completes the proof of the theorem.
\end{proof}

\end{document}